\begin{document}

\title{ INTEGRAL STABILITY OF CALDER\'ON INVERSE CONDUCTIVITY PROBLEM IN THE PLANE}

\author{Albert Clop \and Daniel Faraco
 \and Alberto Ruiz}
\date{}

{\allowdisplaybreaks \sloppy \theoremstyle{plain}
\newtheorem{Theorem}{Theorem}[section]
\newtheorem{Lemma}[Theorem]{Lemma}
\newtheorem{Cor}[Theorem]{Corollary}
\newtheorem{question}[Theorem]{Question}
\newtheorem{example}[Theorem]{Example}
\newtheorem{Prop}[Theorem]{Proposition}
\theoremstyle{definition}
\newtheorem{Def}[Theorem]{Definition}
\newtheorem{Rem}[Theorem]{Remark}
\newtheorem{Prob}{Problem}
\numberwithin{equation}{section}
\def\halmos{{\ \vbox{\hrule\hbox{\vrule height1.3ex\hskip0.8ex\vrule}\hrule}}\par \medskip}
\def\Xint#1{\mathchoice
{\XXint\displaystyle\textstyle{#1}}%
{\XXint\textstyle\scriptstyle{#1}}%
{\XXint\scriptstyle\scriptscriptstyle{#1}}%
{\XXint\scriptscriptstyle\scriptscriptstyle{#1}}%
\!\int}
\def\XXint#1#2#3{{\setbox0=\hbox{$#1{#2#3}{\int}$}
\vcenter{\hbox{$#2#3$}}\kern-.5\wd0}}
\def\ddashint{\Xint=}
\def\dashint{\Xint-}
\def \t{\tilde}
\def \r{\tilde{R}}
\def \D{\mathbb{D}}
\def \div{\operatorname{div}}
\def \Re{\operatorname{Re}}
\def \Im{\operatorname{Im}}
\def \supp{\operatorname{supp}}
\def \diam{\operatorname{diam}}
\def \C{\mathbb{C}}
\def \N{\mathbb N}
\def\Lip{\operatorname{Lip}}
\def\R{\mathbb{R}}
\def\2L{\Lambda_{\tilde{\gamma}}}
\def\1L{\Lambda_{\gamma}}
\def \c{\overline}
\def \d{\partial_z}
\def \dc{\partial_{\overline z}}
\def \cd{\overline{\partial_z}}
\def \dk{\partial_{\overline k}}
\def \kd{\overline{\partial_k}}
\def \dx{\partial_x}
\def \H{H^{1/2}}
\maketitle

\begin{abstract}
It is proved that, in two dimensions, the Calder\'on inverse
conductivity problem in Lipschitz domains is stable in the $L^p$
sense when the conductivities are uniformly bounded in any
fractional Sobolev space $W^{\alpha,p}$ $\alpha>0, 1<p<\infty$.
\end{abstract}

\noindent \emph{Mathematics Subject Classification (2000):} 35R30,
35J15, 30C62.
\section{Introduction}

 \noindent Calder\'on inverse problem consists in the
determination of an isotropic $L^{\infty}$ conductivity
coefficient $\gamma$ on $\Omega$ from boundary measurements. These
measurements are given by the Dirichlet to Neumann map
$\Lambda_\gamma$, defined
 for a function $f$ on $\partial \Omega$ as the Neumann value
$$\Lambda_\gamma(f) = \gamma\frac{\partial}{\partial\nu}u,$$
where $u$ is the solution of the Dirichlet boundary value problem
\begin{equation}\label{cond}
\begin{cases}
\nabla\cdot(\gamma\nabla u)=0\\
u_{\vline \partial \Omega}=f
\end{cases}
\end{equation}
and $\frac{\partial}{\partial \nu}$ denotes the outer normal derivative. For
 general domain and conductivities where the pointwise definition
 $\gamma\frac{\partial}{\partial \nu}u$ has no meaning, the Dirichlet to Neumann map \begin{equation}
\Lambda_\gamma :\H(\partial \Omega) \to H^{-1/2}(\partial \Omega)
\end{equation}
can be defined by
\begin{equation}\label{weakformulation}
\langle \Lambda_\gamma(f), \varphi_0 \rangle= \int_\Omega
\gamma \nabla u\cdot \nabla \varphi
\end{equation}
where $\varphi \in W^{1,2}(\Omega)$ is a function such that $\varphi_{\vline \partial \Omega}=\varphi_0$ in
the sense of traces.\\
\\
Since the foundational work of Calder\'on, the research of the question has been
 very intense but it is not until 2006 when, by means of quasiconformal mappings,
 K. Astala and L. P\"aiv\"arinta in \cite{AP} were able to establish the injectivity of the map
$$\gamma\to \Lambda_\gamma$$
for an arbitrary $L^\infty$ function bounded away from zero. Previous
 planar results were obtained in \cite{Nach} and \cite{SyU2}. In higher dimensions,
 the known results on uniqueness require some extra a priori regularity on $\gamma$
 (basically some control on $\frac{3}{2}$ derivatives of $\gamma$,
 see \cite{SyU}, \cite{B}, \cite{PPU} and \cite{BT}.)\\
\\
A relevant question (specially in applications) is the
 stability of the inverse problem, that is, the continuity of the inverse map
$$\Lambda_\gamma\to \gamma.$$
For dimension $n>2$, the known results are due  to Alessandrini \cite{A}, \cite{A1}. There the author proved stability under the extra assumption $\gamma \in W^{2,\infty}$. In the  planar case, $n=2$, the situation is different. Liu proved stability for conductivities in $W^{2,p}$ with $p>1$ in \cite{Liu}. In \cite{BBR}, stability was obtained when $\gamma \in {\mathcal C}^{1+\alpha}$ with $\alpha>0$. Recently, Barcel\'o, Faraco and Ruiz \cite{BFR} obtained  stability under the weaker assumption $\gamma\in{\cal C}^\alpha$, $0<\alpha<1$. Precisely, they prove that for any two conductivities $\gamma_1,\gamma_2$ on a Lipschitz domain $\Omega$, with a priori bounds $\frac{1}{K}\leq\gamma_i\leq K, K \ge 1$ and $\|\gamma_i\|_{{\cal C}^\alpha}\leq \Lambda_0$, the following estimate holds:
$$
\|\gamma_1-\gamma_2\|_{L^\infty(\Omega)}\leq\,V(\|\Lambda_{\gamma_1}-\Lambda_{\gamma_2}\|_{H^{1/2}(\partial\Omega)\to
H^{-1/2}(\partial\Omega)})
$$
with $V(t)=C\,\log(\frac1t)^{-a}$. Here $C, a>0$ depend
only on $K$, $\alpha$ and $\Lambda_0$.\\
\\
An example, due to Alessandrini \cite{A}, shows that in absence
 of continuity, $L^\infty$ estimates do not hold. Namely, if we
 denote by $B_{r_0}=\{ x \in {\bf R}^2, |x|<r_0\}$ the ball
  centered at the origin with radius $r_0$, take $\Omega= B_{1}$ the unit ball in ${\bf R}^2$, $\gamma_1=1$
  and $\gamma_2= 1+\chi_{B_{r_0}}$, then
  $\| \gamma_1 -\gamma_2\|_{L^\infty(\Omega)}=1$, but
  $\| \Lambda_{\gamma_1} -\Lambda_{\gamma_2} \|_{H^{\frac{1}{2}} \to H^{-\frac{1}{2}}}\leq 2 r_0 \to 0$ as
  $r_0 \to 0$.\\
\\
A closer look to the previous example shows that $\lim_{r_0 \to 0}\|\gamma_1-\gamma_2\|_{L^2(\Omega)}=0$. Therefore one
could conjecture that, in absence of continuity, average stability
(in the $L^2$ sense) might hold. However, it is well known that
some control on the oscillation of $\gamma$ is needed to obtain
stability. Namely, let $\gamma$ be defined in the unit square and
extended periodically. Then the sequence
$\{\gamma(jx)\}_{j=1}^\infty $ $G$-converges to a matrix
$\gamma_0$ (see for example \cite{Tar} for the notion of
$G$-convergence). On one hand, $\gamma( {jx})$ has not any
convergent subsequence in $L^2$. On the  other hand,
$G$-convergence implies the convergence of the fluxes
\cite[Proposition 9]{Tar}. That is, if $u_{j}, u_0$ solve the
corresponding Dirichlet problems for a fixed function $f \in
H^{\frac{1}{2}}(\partial \Omega)$,
\begin{equation}\label{cond2}
\begin{cases}
\nabla\cdot(\gamma_j\nabla u_j)=0\\
u_j{\vline \partial \Omega}=f
\end{cases}
\end{equation}
then, the fluxes satisfy that $\gamma_j u_j \rightharpoonup \gamma
\nabla u$. Thus, by \eqref{weakformulation} $\lim_{j_1,j_2 \to
\infty}
\langle\Lambda_{\gamma_{j_1}}-\Lambda_{\gamma_{j_2}}),\varphi_0\rangle$
for each $\varphi_0$. Notice that $\gamma_j$ can be chosen even being $C^\infty$, so the
problem here is not so much a matter of  regularity but rather a control on the oscillation.\\
\\
In this paper we prove that $L^2$ stability holds if we prescribe a bound of $\gamma$ in any fractional Sobolev space $W^{\alpha,2}$. By the relation with Besov spaces this could be interpreted as controlling the average oscillation of the function. Thus average control on the oscillation of the coefficients yields average stability of the inverse problem.

\begin{Theorem}\label{Thetheorem}
Let $\Omega $ be a Lipschitz domain in the plane. Let $\gamma=\gamma_1, \gamma_2$ be two
planar conductivities in $\Omega$ satisfying
\begin{itemize}
\item{(I)} Ellipticity: $\frac1K\leq \gamma(x)\leq K$.
\item{(II)} Sobolev regularity: $\gamma_i \in W^{\alpha,p}(\Omega)$ with $\alpha>0, 1<p<\infty$, and $\|\gamma_i\|_{{W^{\alpha,p}(\Omega)}}\leq\Gamma_0$.
\end{itemize}
Let $\tilde{\alpha}=\min\{\alpha,\frac{1}{2}\}$. Then there exists two constants $c(K,p)$, $C(K,\alpha,p,\Gamma_0)>0$,  such that:
\begin{equation}
\|\gamma_1-\gamma_2\|_{L^{2}(\Omega)} \leq \frac{C}{|\log(\rho)|^{c\tilde{\alpha}^2}}
\end{equation}
where $\rho=\|\Lambda_{\gamma_1}-\Lambda_{\gamma_2}\|_{H^{1/2}(\partial\Omega) \to H^{-1/2}(\partial\Omega)}$.
\end{Theorem}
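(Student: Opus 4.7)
My plan is to reduce the $L^{2}$ stability for $W^{\alpha,p}$-bounded conductivities to the Hölder stability of \cite{BFR} by means of mollification, and to balance the induced trade-off through an optimization in the mollification scale.

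After extending each $\gamma_i$ to a neighborhood of $\Omega$ preserving both the ellipticity and the $W^{\alpha,p}$ bound (possible since $\Omega$ is Lipschitz), I form the mollifications $\gamma_i^\epsilon = \gamma_i * \eta_\epsilon$ at scale $\epsilon>0$. Standard mollifier estimates --- combined with interpolation against the $L^\infty$ ellipticity bound when $p < 2$ --- furnish the two complementary bounds
\[
\|\gamma_i - \gamma_i^\epsilon\|_{L^2(\Omega)} \le C\,\epsilon^{s}, \qquad
\|\gamma_i^\epsilon\|_{C^{\tilde\alpha}(\Omega)} \le C\,\epsilon^{-\tilde\alpha},
\]
for some $s = s(\alpha, p) > 0$. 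For the Dirichlet-to-Neumann map, Alessandrini's identity coupled with Astala's higher integrability for planar elliptic solutions (granting $\nabla u \in L^r$ for some $r = r(K) > 2$) yields, via Hölder's inequality,
\[
\|\Lambda_\gamma - \Lambda_{\tilde\gamma}\|_{H^{1/2}\to H^{-1/2}} \le C\,\|\gamma - \tilde\gamma\|_{L^{r/(r-2)}(\Omega)}.
\]
Interpolating this estimate against $L^\infty$ and combining with the $L^p$-mollification error then produces $\rho^\epsilon := \|\Lambda_{\gamma_1^\epsilon} - \Lambda_{\gamma_2^\epsilon}\|_{H^{1/2}\to H^{-1/2}} \le \rho + C\,\epsilon^{\beta}$ for some $\beta = \beta(K,\alpha,p) > 0$.

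Now the pair $\gamma_i^\epsilon \in C^{\tilde\alpha}$ falls within the hypotheses of \cite{BFR} with Hölder bound $\Lambda_0^\epsilon \lesssim \epsilon^{-\tilde\alpha}$, giving
\[
\|\gamma_1^\epsilon - \gamma_2^\epsilon\|_{L^\infty(\Omega)} \le C(\Lambda_0^\epsilon)\,\bigl|\log(1/\rho^\epsilon)\bigr|^{-a(\tilde\alpha)}.
\]
The triangle inequality
\[
\|\gamma_1 - \gamma_2\|_{L^2} \le 2\max_i \|\gamma_i - \gamma_i^\epsilon\|_{L^2} + |\Omega|^{1/2}\,\|\gamma_1^\epsilon - \gamma_2^\epsilon\|_{L^\infty}
\]
reduces the problem to selecting $\epsilon=\epsilon(\rho)$ optimally. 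Taking $\epsilon$ as a suitable negative power of $|\log(1/\rho)|$ --- small enough that the $\epsilon^s$ approximation error is polylogarithmic, large enough that the growth $\Lambda_0^\epsilon \sim \epsilon^{-\tilde\alpha}$ stays under control and so that $\rho + \epsilon^{\beta} \sim \rho$ is preserved --- produces the stated rate.

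\textbf{Main obstacle.} The quadratic exponent $\tilde\alpha^2$ does not emerge from a black-box application of \cite{BFR}: it is dictated by the simultaneous presence of the linear-in-$\tilde\alpha$ logarithmic rate of \cite{BFR}, the growth $\Lambda_0^\epsilon \sim \epsilon^{-\tilde\alpha}$ of the Hölder norm upon mollification, and the precise dependence of the BFR constant on $\Lambda_0$. Extracting the correct exponent therefore demands opening up the argument of \cite{BFR} to track how its stability constants scale jointly with $\Lambda_0$ and $\tilde\alpha$, and then carefully optimizing the resulting multi-parameter expression in $\epsilon$. A secondary subtlety, specific to $p<2$, is that $W^{\alpha,p} \not\hookrightarrow H^{\alpha}$; the exponent $s$ in the $L^{2}$ mollification bound must therefore be recovered through interpolation with the $L^{\infty}$ ellipticity, and it is the resulting reduced value of $s$ (rather than $\alpha$ itself) that enters the final rate, explaining the cutoff $\tilde\alpha = \min(\alpha, 1/2)$.
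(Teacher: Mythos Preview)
Your mollification-and-reduce-to-\cite{BFR} strategy is a genuinely different route from the paper, which instead rebuilds the entire \cite{BFR} machinery directly for $W^{\alpha,2}$ coefficients (regularity of Beltrami solutions with Sobolev data, composition with quasiconformal maps, subexponential decay of CGO solutions) and never passes through the H\"older case. Your approach is conceptually attractive because it would turn the main theorem into a corollary of existing work, but as written it has a real gap.

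The gap is exactly the one you flag but do not close: the \cite{BFR} estimate reads $\|\gamma_1-\gamma_2\|_{L^\infty}\le C(K,\alpha,\Lambda_0)\,|\log(1/\rho)|^{-a}$, and you need the prefactor to be at worst polynomial in $\Lambda_0$ (and the exponent $a$ independent of $\Lambda_0$) for the optimization in $\epsilon$ to yield any positive power of $|\log(1/\rho)|$. In \cite{BFR} the $\Lambda_0$-dependence enters in at least two places that are not obviously polynomial: the Schauder constant in $\|f_{\mu_1}-f_{\mu_2}\|_{{\cal C}^{1+\alpha}}\le C(k)$ and, more seriously, the pointwise Jacobian lower bound $J(z,f_\mu)\ge C(K,k,\Gamma_0)$ used to pass from $Df_\mu$ back to $\mu$. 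If that lower bound degenerates faster than polynomially as $\Gamma_0\to\infty$, your optimization collapses. The paper sidesteps this entirely: instead of a pointwise Jacobian bound it uses only the integral estimate $\int_\D|\partial f|^{-p}\le C(K,k)$ for $p<\frac{2}{K-1}$ (Lemma~\ref{inversecgos}), which depends only on quasiconformality and not on any smoothness norm. That is precisely the device that makes the constants trackable, and it is not available to you if you treat \cite{BFR} as a black box.

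A secondary point: your explanation of the cutoff $\tilde\alpha=\min\{\alpha,\tfrac12\}$ is not the paper's reason. In the paper it arises in the reduction to $\D$, because the extension of $\gamma$ by $1$ off $\Omega$ multiplies by $\chi_\Omega$, and $\chi_\Omega\in W^{\beta,2}(\C)$ only for $\beta<\tfrac12$ when $\Omega$ is Lipschitz. Your mollification route would not produce the cutoff in this way.
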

\noindent
The theorem is specially interesting for $\alpha \to 0$. Then we are close to get stability for conductivities in  $L^\infty$. \noindent An estimate for the behavior of $C$ in terms of $\Gamma_0$ is obtained in the particular case $p=2$ (see Corollary \ref{finalcoroll}), and analogous results for $p\neq 2$ can be deduced by interpolation. Also interpolating one can obtain $L^p$ stability estimates, whose behavior in $\alpha$ will be quadratic as well.\\
\\
Concerning the logarithmic modulus of continuity, the arguments of Mandache \cite{Ma} can be adapted to the $L^2$ setting. Namely we can consider the same set of conductivities with the obvious replacement of the $C^m$ function by a normalized $W^{\alpha,2}$ function. The argument shows the existence of two conductivities such that $\|\gamma_1-\gamma_2\|_{L^\infty(\D)} \le \epsilon,\, \|\gamma_i\|_{{W^{\alpha,p}(\Omega)}}\leq\Gamma_0$, but
\begin{equation}
\|\gamma_1-\gamma_2\|_{L^2(\D)} \ge \frac{1}{C |\log
(\rho)|^{\frac{3(1+\alpha)}{2\alpha}}}.
\end{equation}
Here $C$ is a constant depending on all the parameters. Notice that the power is better than in the $L^\infty$ setting but still the modulus of continuity is far from being satisfactory.\\
\noindent
In our way to prove Theorem \ref{Thetheorem} we have dealt with several questions related to quasiconformal mappings of independent interest.  More precisely, we have needed to understand how quasiconformal mappings interact with fractional Sobolev spaces. In particular we analyze the regularity of Beltrami equations with Sobolev bounds on the coefficients which has been a recent topic of interest in the theory. See \cite{CFMOZ,CT} where the case $\mu \in W^{1,p}$ is investigated in relation with the size of removable sets. We prove the
following regularity result.

\begin{Theorem}\label{theoremhomeoregularity}
Let $\alpha\in (0,1)$, and suppose that $\mu,\nu\in W^{\alpha, 2}(\C)$ are Beltrami coefficients, compactly supported in $\D$, such that
$$\left||\mu(z)|+|\nu(z)|\right|\leq\,\frac{K-1}{K+1}.$$
at almost every $z\in\D$. Let $\phi:\C\to\C$ be the only homeomorphism satisfying
$$\overline\partial \phi=\mu\,\partial \phi+\nu\,\overline{\partial\phi}$$
and $\phi(z)-z={\cal O}(1/z)$ as $|z|\to\infty$. Then, $\phi(z)-z$ belongs to $W^{1+\theta\alpha, 2}(\C)$ for every $\theta\in(0,\frac{1}{K})$, and
$$
\|D^{1+\theta\alpha}(\phi-z)\|_{L^2(\C)}\leq
C_K\,\left(\|\mu\|_{W^{\alpha,2}(\C)}^\theta+\|\nu\|_{W^{\alpha,2}(\C)}^\theta\right)
$$
for some constant $C_K$ depending only on $K$.
\end{Theorem}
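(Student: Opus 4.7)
My plan is to reformulate the Beltrami equation as a fixed-point equation for $F := \partial(\phi - z)$ and extract fractional Sobolev regularity by combining the $L^q$ theory of Astala-Iwaniec-Saksman with a careful interpolation argument. Using the decay $\phi(z)-z=\mathcal{O}(1/z)$, the Beurling transform $\mathcal{S}$ satisfies $F=\mathcal{S}\,\bar\partial(\phi-z)$, so the Beltrami equation rewrites as
\begin{equation*}
(I-\mathcal{B})\,F=\mathcal{S}(\mu+\nu),\qquad \mathcal{B}\,h:=\mathcal{S}(\mu\,h+\nu\,\bar h).
\end{equation*}
The pointwise bound $|\mu|+|\nu|\le(K-1)/(K+1)$, combined with the Astala-Iwaniec-Saksman theorem, makes $I-\mathcal{B}$ boundedly invertible on $L^q(\C)$ for every $q\in\bigl(\tfrac{2K}{K+1},\tfrac{2K}{K-1}\bigr)$, with operator norm depending only on $K$ and $q$. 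In particular $F\in L^q$ for $q$ up to the Astala exponent $q_0:=\tfrac{2K}{K-1}$, and the compact support of $\mu,\nu$ in $\D$ produces the trivial bound $\|F\|_{L^2}\le C_K$ independent of $\|\mu\|_{W^{\alpha,2}}$.

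To upgrade these estimates to fractional Sobolev regularity, I apply the Riesz fractional derivative $D^s:=(-\Delta)^{s/2}$ to both sides of the fixed-point equation. Since $\mathcal{S}$ and $D^s$ commute as Fourier multipliers, and the fractional Leibniz (Kato-Ponce) rule gives $D^s(\mu h)=\mu D^s h+hD^s\mu+R_s$ for a commutator remainder $R_s$, one obtains
\begin{equation*}
(I-\mathcal{B})(D^sF)=\mathcal{S}\,D^s(\mu+\nu)+\mathcal{S}\bigl(F\,D^s\mu+\bar F\,D^s\nu\bigr)+\mathcal{S}\,R_s.
\end{equation*}
For $s\le\alpha$ the first term lies in $L^2$ with norm at most $C_K(\|\mu\|_{W^{\alpha,2}}+\|\nu\|_{W^{\alpha,2}})$. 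The critical term is the cross product $F\,D^s\mu$: by H\"older and the sharp two-dimensional Sobolev embedding $W^{\alpha-s,2}\hookrightarrow L^{p_1}$ with $p_1=2/(1-\alpha+s)$, this reduces to controlling $F$ in the dual Lebesgue exponent $p_2=2/(\alpha-s)$. Matching $p_2$ against the Astala-Iwaniec-Saksman range $p_2<q_0$ forces $s$ below $\alpha/K$, and (after also handling the remainders $R_s$ by similar Sobolev-H\"older bookkeeping) produces a linear intermediate estimate $\|F\|_{W^{s_*,2}}\le C_K(\|\mu\|_{W^{\alpha,2}}+\|\nu\|_{W^{\alpha,2}})$ for $s_*$ approaching $\alpha/K$.

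Finally, real interpolation between this linear Sobolev bound and the $\mu$-independent bound $\|F\|_{L^2}\le C_K$ produces the $\theta$-th power of the data norm asserted in the theorem: rescaling the interpolation parameter so that the intermediate smoothness equals $\theta\alpha$ while the ratio of the data-dependent endpoint forces the power to collapse to $\theta$, one arrives at
\begin{equation*}
\|F\|_{W^{\theta\alpha,2}}\le C_K\bigl(\|\mu\|_{W^{\alpha,2}}+\|\nu\|_{W^{\alpha,2}}\bigr)^\theta,\qquad\theta\in(0,\tfrac{1}{K}).
\end{equation*}
Since $F=\partial(\phi-z)$, this is exactly the statement. The main obstacle is the cross term $F\,D^s\mu$: matching the sharp Sobolev embedding of $D^s\mu$ with the $L^q$ integrability range of $F$ from Astala-Iwaniec-Saksman is precisely what forces the $\alpha/K$ threshold and hence the constraint $\theta<1/K$. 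This factor-$K$ loss of smoothness parallels the classical H\"older result $\partial\phi\in C^{\alpha/K}$ for $\mu\in C^{\alpha}$ and reflects the $K$-quasiconformal distortion of fractional smoothness scales.
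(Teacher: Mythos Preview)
Your overall architecture---rewrite the equation as $(I-\mathcal{B})F=\mathcal{S}(\mu+\nu)$, apply $D^s$, use the fractional Leibniz rule, and invert the Beltrami operator on $L^2$---is exactly the paper's route. The gap is in how you estimate the cross term $F\,D^s\mu$. You place $D^s\mu$ in $L^{p_1}$ via the Sobolev embedding $W^{\alpha-s,2}\hookrightarrow L^{2/(1-\alpha+s)}$; the H\"older constraint $\tfrac{1}{p_1}+\tfrac{1}{p_2}=\tfrac12$ then forces $p_2=2/(\alpha-s)$, and the Astala range $p_2<2K/(K-1)$ gives $s<\alpha-(K-1)/K$, \emph{not} $s<\alpha/K$ as you claim. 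For $\alpha\le(K-1)/K$ this interval is empty, so your argument collapses precisely in the small-$\alpha$ regime the theorem is designed for.

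What you are missing is the $L^\infty$ bound on the coefficients. The paper interpolates $\mu\in L^\infty\cap W^{\alpha,2}$ to get $\mu\in W^{\alpha\theta,2/\theta}$ with
\[
\|D^{\alpha\theta}\mu\|_{L^{2/\theta}}\le C\,\|\mu\|_{L^\infty}^{1-\theta}\,\|\mu\|_{W^{\alpha,2}}^{\theta}.
\]
Choosing $p_1=2/\theta$ makes the dual exponent $p_2=2/(1-\theta)$, and $p_2<2K/(K-1)$ is exactly $\theta<1/K$, valid for \emph{every} $\alpha\in(0,1)$. This single interpolation simultaneously produces the correct range $s=\theta\alpha<\alpha/K$ and the correct power $\|\mu\|_{W^{\alpha,2}}^{\theta}$ on the right-hand side. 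Your final real-interpolation step is therefore unnecessary; note also that as written it would yield the exponent $K\theta$ rather than $\theta$, since interpolating between $\|F\|_{L^2}\le C_K$ and a linear bound at smoothness $s_*\approx\alpha/K$ with target smoothness $\theta\alpha$ requires interpolation parameter $\lambda=\theta\alpha/s_*\to K\theta$.
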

\noindent
Many corolaries can be obtained form this theorem by interpolation, as
for example what do you obtain if $\mu$ is a function of bounded variation. We have contented ourselves with the $L^2$ setting but similar results hold in $L^p$.  As a consequence of
this theorem, we obtain the corresponding regularity of the complex geometric optics solutions.\\
The other  crucial ingredient in our proof  is the regularity of
$\mu \circ \psi$ where $\psi$ is a normalized quasiconformal
mapping. It is well known that quasiconformal mappings preserve
$BMO$ and $\dot{W}^{1,2}$ but it is not clear what happens with
the intermediate spaces. We prove the following stament,
\begin{equation}
\label{comp}
 \mu \in W^{\alpha,2}\hspace{.5cm}\Rightarrow\hspace{.5cm} \mu \circ \psi \in
W^{\beta,2},\hspace{.2cm}\text{ for every }\beta<\frac{\alpha}{K}
\end{equation}
which suffices for our purposes. The proof relies on the fact that Jacobians of quasiconformal mappings are Muckenhoupt
weights \cite{AIS} \\
\\
The Lipschitz regularity of the domain $\Omega$ is used to reduce the problem to the unit disk $\D$. This reduction relies on two facts. First, any Lispchitz domain $\Omega$ is an extension domain for fractional Sobolev spaces. Secondly, the characteristic function $\chi_{\Omega}$ belongs to $W^{\alpha,2}(\C)$ for any $\alpha<\frac12$. Indeed, this is responsible also of the constraint $\tilde{\alpha}<\frac12$ at Theorem \ref{Thetheorem}. In fact, a stability result holds as well if $\Omega$ is any simply connected extension domain. To see this, recall that planar simply connected extension domains $\Omega$ are quasidisks (\cite{Geh}), that is, $\Omega=\phi(\D)$ where $\phi:\C\to\C$ is quasiconformal. Therefore, for instance by our results in Section \ref{beltramifractsob}, $\chi_\Omega=\chi_\D\circ\phi^{-1}$ belongs to some space $W^{\tilde{\alpha},2}$, and then use Theorem \ref{Thetheorem}.\\
\\
The rest of the paper is organized as follows. In Section \ref{strat} we recall previous facts from \cite{AP,BFR} which will be needed in the present paper, and describe the strategy of our proof. In Section \ref{reductionOmega=D} we reduce the problem to conductivities $\gamma$ such that $\gamma-1\in W^{\alpha,2}_0(\D)$. In Section \ref{beltramifractsob} we study the interaction between quasiconformal mappings and fractional Sobolev spaces. Finally in Section \ref{assymptotics} we prove the subexponetial growth of the complex geometric optic solutions and in Section \ref{endofproof} we prove the theorem.\\
\\
In closing we remark several issues raised by our work. The first one is to improve the logarithmic character of the stability. It was proved by Alesssandrini and Vesella that often a logarithmic estimate yields Lipschitz stability for some finite dimensional spaces of conductivities. However, to achieve the desired estimates
in our setting seems to require a more subtle understanding of the Beltrami equation and we leave it for the future. It will also be desirable to obtain $L^p$ estimates in terms of $W^{\alpha,p}$ with constants independent of $p$, so that the ${\cal C}^\alpha$ situation in \cite{BFR} could be understood as a limit of this paper. This seems to require an $L^2$ version of the boundary recovery results of Alessandrini  \cite{A1} and Brown (see \cite{B1}). Finally, from the quasiconformal point of view, there seems to be room for improvement in our estimates specially concerning the composition which is far from being optimal when $\alpha \nearrow 1$, since $\dot{W}^{1,2}$ is invariant under composition with quasiconformal maps. This will also be the issue for further investigations.



\subsection*{Notation}
\noindent Complex and real derivatives are denoted by
$$\aligned
\dc&=\overline\partial=\frac{\partial}{\partial\overline{z}}=\frac12\left(\frac{\partial}{\partial x}+ i\frac{\partial}{\partial y}\right)\\
\d&=\partial=\frac{\partial}{\partial
z}=\frac12\left(\frac{\partial}{\partial x}-
i\frac{\partial}{\partial y}\right)
\endaligned$$
where $z=x+iy$. For a mapping $\phi:\Omega \to \C$, its Jacobian determinant is denoted by $J(z,\phi)=|\d\phi(z)|^2 -|\dc\phi(z)|^2$. For $k \in \C$ we will use the unimodular function $e_{k}(z)=e^{ikz+i\bar k \bar z}$. Notice that then we can define the Fourier transform by
$$\widehat{f}(k)=\int_\C e_{-k}(z)\,f(z)\,dA(z).$$
The spaces $L^p(\Omega)$, $\dot{W}^{1,p}(\Omega)$ and $W^{1,p}(\Omega)$ are defined as usually. Then, following Adams \cite{Ad}, one introduces $W^{\alpha,p}(\Omega)$ as the complex interpolation space
$$W^{\alpha,p}(\Omega)=[L^p(\Omega), W^{1,p}(\Omega)]_\alpha,$$
and similarly for the homogeneous case $\dot{W}^{\alpha,p}(\Omega)=[L^p(\Omega), \dot{W}^{1,p}(\Omega)]_\alpha$.
\noindent
The H\"older space $C^{\alpha}(\Omega)$ over a domain $\Omega$ is
$${\cal C}^\alpha(\Omega)=\left\{f:\|f\|_{L^\infty}+\sup_{x,y \in \Omega}
\frac{|f(x)-f(y)|}{|x-y|^\alpha}<\infty\right\}.$$ For simplicity,
$H^1(\Omega)=W^{1,2}(\Omega)$ and $H^1_0=W^{1,2}_0(\Omega)$. By
$H^\frac{1}{2}(\partial \Omega)$ we denote the quotient space
$H^{1}(\Omega)/ H^1_0(\Omega)$. Given a Banach space $X$ we denote
the operator norm of $T \colon X \to X$ by $\|T\|_X$.
\noindent
We remark that $C$ or $a$ denote constants which may change at each occurrence. We
will indicate the dependence of the constants on parameters $K$, $\Gamma$, etc,
 by writing $C=C(K,\Gamma,...)$.\\
Finally,   for two conductivities $\gamma_1$ and $\gamma_2$, we
write
$$\rho= \|\Lambda_{\gamma_1}-\Lambda_{{\gamma_2}}\|_{H^{1/2} \to
H^{-1/2}}.$$

\subsection*{Acknowledgements}
Part of this work was done in several research visits of A. Clop to the Department of Mathematics of the Universidad Aut\'onoma de Madrid, to which he is indebted for their hospitality. A. Clop is partially supported by projects {\it{Conformal Structures and Dynamics}}, GALA (contract no. 028766), 2005-SGR-00774 (Generalitat de Catalunya) and MTM2007-62817 (Spain). D. Faraco wants to thank C. Sbordone for inspiring remarks concerning $G$ convergence. D. Faraco and A. Ruiz are partially supported by project MTM2005-07652-C02-01 of Ministerio de
Educaci\'on y Ciencia, Gobierno de Espa\~{n}a.


\section{Scheme of the proof}\label{strat}

We will follow the strategy of \cite{BFR}. This work focusses on the approach based on the Beltrami equation iniciated in \cite{AP}. The starting point is the answer to Calder\'on conjecture in the plane obatained by Astala and P\"aiv\"arinta.

\begin{Theorem}[Astala-P\"aiv\"arinta]
Let $\Omega\subset\R^2$ be a bounded simply connected domain, and
let $\gamma_i\in L^\infty(\Omega)$, $i=1,2$. Suppose that there
exist a constant $K>1$ such that $\frac{1}{K}\leq\gamma_i\leq K$.
If
$$\Lambda_{\gamma_1}=\Lambda_{\gamma_2}$$
then $\gamma_1=\gamma_2$.
\end{Theorem}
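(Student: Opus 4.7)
The plan is to follow the strategy introduced in \cite{AP}: transform the real second-order conductivity equation into a complex first-order Beltrami equation, construct complex geometric optics (CGO) solutions parametrized by a spectral variable $k\in\C$, and recover $\gamma$ from the $\dk$-behaviour of these solutions. Setting $\mu=\frac{1-\gamma}{1+\gamma}$, one has $\mu\in L^\infty(\Omega)$ with $\|\mu\|_\infty\leq\frac{K-1}{K+1}<1$. A direct computation shows that if $u$ is $\gamma$-harmonic with $\gamma$-harmonic conjugate $v$, then $f=u+iv$ satisfies the $\R$-linear Beltrami equation $\dc f=\mu\,\overline{\d f}$ in $\Omega$. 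Extending $\mu$ by zero to the whole plane allows one to work globally on $\C$.

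Next, for each $k\in\C$, I would construct the two CGO solutions $f_{\pm\mu}(z,k)=e^{ikz}M_{\pm\mu}(z,k)$ with $M_{\pm\mu}(z,k)-1={\cal O}(1/z)$ as $|z|\to\infty$. Existence, uniqueness and local $W^{1,p}$-regularity for the factors $M_{\pm\mu}$ follow from the $L^p$-boundedness of the Beurling transform for some $p>2$ determined only by $K$, together with a Fredholm/fixed-point argument on $\C$. The key measurement step is that the boundary traces of $f_{\pm\mu}(\cdot,k)$ on $\partial\Omega$ are uniquely determined by $\Lambda_\gamma$: this reflects the fact that $\Lambda_\gamma$ encodes the pair $(u|_{\partial\Omega},v|_{\partial\Omega})$ for every $\gamma$-harmonic $u$, and $\Lambda_{1/\gamma}$ is read off from $\Lambda_\gamma$ via a standard Alessandrini-type identity. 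Consequently, $\Lambda_{\gamma_1}=\Lambda_{\gamma_2}$ forces the boundary values of the CGO families associated to $\mu_1$ and $\mu_2$ to coincide for every $k$.

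The main analytic obstacle lies in controlling the $k$-dependence of $M_{\pm\mu}$. Specifically, one must prove (i) that $M_{\pm\mu}(z,k)$ is subexponentially bounded in $k$, uniformly in $z$ on compact sets, and (ii) that $M_\mu$ and $M_{-\mu}$ are coupled by a non-linear $\dk$-equation of scattering type,
$$\dk M_\mu(z,k)=\tau_\mu(k)\,e^{-i(kz+\overline{k}\overline z)}\,\overline{M_{-\mu}(z,k)},$$
for a suitable transport coefficient $\tau_\mu(k)$. Part (ii) is an essentially algebraic consequence of the symmetry of the Beltrami equation under $\mu\leftrightarrow-\mu$ together with complex conjugation. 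Part (i) is the deepest point of the whole argument: to rule out the exponential growth that the factor $e^{ikz}$ naively suggests, one invokes Astala's sharp area distortion theorem combined with a factorization of $\mu$-quasiregular maps through a principal solution, and then controls the $k$-family via careful $L^p$ estimates for the associated Beurling-type operators.

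With these ingredients in place, uniqueness follows by reading the problem as a $\dk$-Cauchy problem in the spectral variable: the boundary data of $f_{\pm\mu}$ on $\partial\Omega$, together with the asymptotic normalization $M_{\pm\mu}\to 1$ at $|z|=\infty$ and the subexponential bound, determine $M_{\pm\mu}$ globally on $\C\times\C$; in turn $\mu$ is recovered from the $1/k$-expansion of $M_\mu(z,k)$ as $|k|\to\infty$ (or equivalently from the transport coefficient $\tau_\mu$). Since $\Lambda_{\gamma_1}=\Lambda_{\gamma_2}$ produces identical CGO boundary traces and hence identical scattering coefficients $\tau_{\mu_1}=\tau_{\mu_2}$, one concludes $\mu_1=\mu_2$ pointwise on $\Omega$ and therefore $\gamma_1=\gamma_2$.
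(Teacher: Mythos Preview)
Your outline follows the same overall architecture that the paper attributes to \cite{AP}: reduce to a Beltrami equation via $\mu=\frac{1-\gamma}{1+\gamma}$, build the CGO families $f_{\pm\mu}$, show that $\Lambda_\gamma$ determines their exterior boundary values and hence the scattering datum $\tau_\mu$, and then use the $\overline\partial_k$-equation together with subexponential control to pass from $\tau_\mu$ back to $\gamma$. That matches the paper's ``strategy for uniqueness'' sketch essentially step for step.

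There is, however, one genuine soft spot in your final step. You write that ``$\mu$ is recovered from the $1/k$-expansion of $M_\mu(z,k)$ as $|k|\to\infty$ (or equivalently from the transport coefficient $\tau_\mu$)''. For merely $L^\infty$ conductivities this is precisely what one \emph{cannot} do: asymptotic expansions of the CGO factors in $1/k$, and the usual decay estimates on $\tau_\mu$ that make the $\overline\partial_k$-equation uniquely solvable for fixed $z$, all seem to require roughly one derivative of $\gamma$ (this is the Nachman / Brown--Uhlmann regime). The paper stresses exactly this point: with only $\mu\in L^\infty$ one gets merely \emph{subexponential} behaviour, $u_\gamma(z,k)=e^{ik(z+\epsilon_\mu(z,k))}$ with $\epsilon_\mu(z,k)\to 0$ as $|k|\to\infty$, and that is not enough to run a classical $\overline\partial_k$-uniqueness argument at a single $z$. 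What \cite{AP} actually does---and what your sketch omits---is a \emph{topological} argument simultaneous in the $z$ and $k$ variables: one exploits that $u_\gamma(\cdot,k)$ satisfies an equation for every $z$, combines the subexponential $k$-asymptotics with the $z$-asymptotics, and concludes that $\tau_\mu$ determines $u_\gamma$ globally. Only then does one finish via $u_{\gamma_1}=u_{\gamma_2}\Rightarrow Du_{\gamma_1}=Du_{\gamma_2}\Rightarrow \gamma_1=\gamma_2$, without ever expanding in $1/k$.

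So your plan is right in spirit and in most details, but the endgame you describe is the smooth-conductivity endgame; for the $L^\infty$ theorem you need to replace the $1/k$-expansion step by the two-variable topological argument of \cite{AP}.
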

\noindent
In other words, the mapping $\gamma\mapsto\Lambda_\gamma$ is injective. We recall the basic elements from \cite{AP} needed in the sequel, also the strategies for uniqueness and stability, and what we will need in the current paper.

\paragraph{Equivalence between Beltrami and conductivity equation:}
Let $\D$ be the unit disc. If a function $u$ is $\gamma$-harmonic
in $\D$, then there exists another function $v$, called its
$\gamma$-harmonic conjugate (and actually $\gamma^{-1}$-harmonic
in $\Omega$), unique modulo constants, such that $f=u+iv$
satisfies the $\R$-linear Beltrami type equation
\begin{equation}\label{rlinearbeltrami}
\overline\partial f=\mu\,\overline{\partial f}
\end{equation}
with
\begin{equation}\label{mugamma}
\mu=\frac{1-\gamma}{1+\gamma} \in \R.
\end{equation}
Then if $K \ge 1$ is the ellipticity constant of $\gamma$  we
denote by
\[ \kappa=\frac{K-1}{K+1}.  \]
It is an algebraic fact to show that $\|\mu\|_\infty\le \kappa$ and thus the Beltrami equation is elliptic when so is the conductivity equation and viceversa. Moreover, for $x\in (\frac{1}{K},K)$, the function $F(x)=\frac{1-x}{1+x}$ satisfies $\frac{2}{1+K}\leq|F'(x)|\leq \frac{2K}{1+K}$. Thus, it also follows that
$$
\frac{1}{C}\,\|\gamma\|_{W^{\alpha,p}(\Omega)}\leq\|\mu\|_{W^{\alpha,p}(\Omega)}\leq C\,
\|\gamma\|_{W^{\alpha,p}(\Omega)},
$$
where the constant $C$ only depends on $K$ (see Lemma \ref{KePV}). Therefore, bounds in terms of $\mu$ and $\gamma$ are equivalent. \\
We can argue as well in the reverse direction. If $f\in W^{1,2}_{loc}(\D)$ satisfies (\ref{rlinearbeltrami}) for real $\mu$ with $\|\mu\|_\infty\leq\kappa$, then we can write $f=u+iv$ where $u$ and
$v$ satisfy
$$\div\left(\gamma\,\nabla u\right)=0\hspace{1cm}\text{and}\hspace{1cm}\div\left(\gamma^{-1}\,\nabla
v\right)=0.
$$
Thus, it is equivalent to determine either $\gamma$ or $\mu$, and throughout the paper we will work with either of them indistinctly.\\
\\
As for holomorphic functions, $u$ and $v$ are related by the corresponding Hilbert transform
$${\cal H}_\mu \colon H^\frac{1}{2}(\partial\D)\to H^\frac{1}{2}(\partial\D)$$
defined as
$${\cal H}_\mu(u|_{\partial\D})=v|_{\partial\D}$$
for real functions, and $\R$-linearly extended to $\C$-valued
functions by setting ${\cal H}_\mu(iu)=i\,{\cal H}_{-\mu}(u)$.
Since $\partial_T {\cal H}_\mu=\Lambda_\gamma$ it follows
\cite[Proposition 2.7]{AP} that ${\cal H}_\mu$, ${\cal H}_{-\mu}$
and $\Lambda_{\gamma^{-1}}$ are uniquely determined by
$\Lambda_\gamma$. Accordingly in \cite[Proposition 2.2]{BFR} it is
shown that
$$\|{\cal H}_{\mu_1}-{\cal H}_{\mu_2}\|\lesssim\|\Lambda_{\gamma_1}-\Lambda_{\gamma_2}\|$$
with respect to the corresponding operator norms. In other words, the mapping $\Lambda_\gamma\mapsto{\cal H}_\mu$ is Lipschitz continuous independently of the regularity of $\gamma$.

\paragraph{Existence of complex geometric optics solutions, scattering transform and $\partial_k$ equations:}
The theory of quasiconformal mappings and Beltrami operators allow to combine in an efficient way ideas from complex analysis, singular integral operators and degree arguments to prove the existence of \emph{complex geometric optics solutions} with no assumptions on the coefficients.

\begin{Theorem}\label{existuniqbeltrami}
Let $\kappa\in(0,1)$, and let $\mu$ be a real Beltrami coefficient, compactly supported in $\D$, satisfying $\|\mu\|_\infty<\kappa$. For every $k\in\C$ and $p\in(2,1+\frac{1}{\kappa})$ the equation
$$
\overline\partial f=\mu\,\overline{\partial f}
$$
admits a unique solution $f\in W^{1,p}_{loc}(\C)$ of the form
$$
f(z)=e^{ikz}\,M_\mu(z,k)
$$
such that $M_\mu(z,k)-1={\cal O}(1/z)$ as $|z|\to\infty$. Moreover,
$$\Re\left(\frac{M_{-\mu}}{M_\mu}\right)>0$$
and $f_\mu(z,0)=1$.
\end{Theorem}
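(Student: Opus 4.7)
The approach is to convert the problem into a fixed-point equation for the normalized factor $M(z,k)=e^{-ikz}f(z)$, solve it on $L^p(\C)$ using the $L^p$ theory of the Beurling transform, and then extract uniqueness together with the positivity $\Re(M_{-\mu}/M_\mu)>0$ from Stoilow factorization and a quasiconformal argument-principle argument.

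First I would substitute the ansatz $f(z)=e^{ikz}M(z,k)$ into the $\R$-linear Beltrami equation. Since $e^{ikz}$ is holomorphic in $z$ and $\overline{e^{ikz}}=e^{-i\bar k\bar z}$, with the paper's notation $e_{-k}(z)=e^{-i(kz+\bar k\bar z)}$ a direct calculation converts the equation into
\[
\dc M \;=\; \mu\,e_{-k}\,\overline{\d M} \;-\; i\bar k\,\mu\,e_{-k}\,\bar M,
\]
subject to $M-1=\mathcal{O}(1/z)$ at infinity. Writing $M=1+g$ and letting $C$ and $S$ denote the Cauchy and Beurling transforms (so that $g=C(\dc g)$ and $\d g=S(\dc g)$ for $g$ with this decay), the problem reduces to the integral equation
\[
(I - T_k)(\dc g) \;=\; i\bar k\,\mu\,e_{-k},
\]
with $T_k\omega=\mu\,e_{-k}\,\overline{S\omega}+i\bar k\,\mu\,e_{-k}\,\overline{C\omega}$. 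Note that $\dc g$ is automatically supported in $\D$ since $\mu$ is.

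Second, I would solve this equation in $L^p(\C)$. Its principal part $I-\mu\,e_{-k}\,\overline{S(\cdot)}$ is invertible precisely throughout the prescribed range $p\in(2,1+1/\kappa)$ by the sharp $L^p$ theory of Beltrami operators of Astala--Iwaniec--Saksman. Since $\mu$ is compactly supported, the Cauchy-transform contribution in $T_k$ is compact on $L^p(\D)$, so a Fredholm alternative or Neumann-series argument yields a unique $\dc g\in L^p(\C)$. Recovering $g=C(\dc g)$ gives $g\in W^{1,p}_{loc}(\C)$ with $g(z)=\mathcal{O}(1/z)$, hence the desired $f=e^{ikz}(1+g)\in W^{1,p}_{loc}(\C)$ satisfying the Beltrami equation and the asymptotics.

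Third, for uniqueness I would invoke Stoilow factorization: any CGO solution can be written as $M=h\circ\phi_k$, where $\phi_k$ is the principal quasiconformal homeomorphism of the coefficient $\mu e_{-k}$ and $h$ is entire; the normalization $M-1=\mathcal{O}(1/z)$ forces $h\equiv 1$. In particular at $k=0$ the constant $f\equiv 1$ solves the equation with the correct asymptotics, so uniqueness delivers $f_\mu(z,0)=1$. The most delicate point, and the one I expect to be the main obstacle, is the positivity $\Re(M_{-\mu}/M_\mu)>0$: for real $\mu$ the coefficient $-\mu$ also generates a unique CGO solution $f_{-\mu}=e^{ikz}M_{-\mu}$, and combining them, the function $h=f_\mu+\overline{f_{-\mu}}$ has real and imaginary parts solving the conductivity equations for $\gamma$ and $\gamma^{-1}$ respectively. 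Invoking the argument principle for quasiregular mappings, as in Astala--P\"aiv\"arinta, one shows that $h$ is zero-free on $\C$, which is algebraically equivalent to $\Re(M_{-\mu}/M_\mu)>0$.
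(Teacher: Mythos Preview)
The paper does not give its own proof of this theorem; it is quoted as \cite[Theorem 4.2]{AP}. There, existence is obtained not through a linear Fredholm equation for $M$ but by writing $f=e^{ik\phi}$ with $\phi$ a normalized quasiconformal homeomorphism solving the \emph{nonlinear} equation $\dc\phi=-\mu\,\frac{\bar k}{k}\,e_{-k}(\phi)\,\overline{\d\phi}$ (cf.~equation \eqref{equationphi} in the present paper), and producing $\phi$ by a Schauder-type fixed-point argument; uniqueness then follows from the uniqueness of the normalized homeomorphic solution. Your integral-equation approach for $\dc g$ in $L^p$ with principal part $I-\mu e_{-k}\overline{S(\cdot)}$ and a compact Cauchy-transform remainder is a legitimate alternative in the spirit of Nachman and Brown--Uhlmann, and the structural observations (invertibility of the principal part on the range $2<p<1+1/\kappa$ by \cite{AIS}, compactness of the remainder) are correct.

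There is, however, a genuine gap in the uniqueness step, and without it the Fredholm alternative does not close. The equation you derived for $M$ is $\dc M=\mu e_{-k}\overline{\d M}-i\bar k\,\mu e_{-k}\,\bar M$; because of the zero-order term $-i\bar k\,\mu e_{-k}\,\bar M$ this is \emph{not} a Beltrami equation, so your proposed Stoilow factorization $M=h\circ\phi_k$ with $\phi_k$ the principal map of $\mu e_{-k}$ is not available. One can instead look at the difference $g=f_1-f_2$ of two putative solutions, which does satisfy the pure $\R$-linear equation $\dc g=\mu\,\overline{\d g}$ and is hence $K$-quasiregular; but the normalization only gives $e^{-ikz}g(z)\to0$, and since $|e^{ikz}|$ is unbounded in a half-plane a naive Liouville argument fails. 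Closing this requires either a Phragm\'en--Lindel\"of type argument for pseudoanalytic functions, or the route actually taken in \cite{AP}: pass to the nonlinear equation for $\phi$, where uniqueness of the normalized $K$-quasiconformal solution is standard. Your treatment of $\Re(M_{-\mu}/M_\mu)>0$ via the argument principle for quasiregular mappings is essentially the argument in \cite{AP}, though the relevant zero-free function there is $f_\mu+f_{-\mu}$ rather than $f_\mu+\overline{f_{-\mu}}$.
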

\noindent
\noindent
In this context, the proper definition of scattering transform of $\mu$ (or of $\gamma$) is
\begin{equation}
\tau_\mu(k)=\frac{i}{4\pi}\int_{\D}\frac{\partial}{\partial z} \left(e^{i\overline{kz}}(\overline{f_\mu(z)}-\overline{f_{-\mu}(z)})\right)\,dA(z).
\end{equation}
The complex geometric optics solutions $\left\{u_\gamma, \tilde{u}_\gamma\right\}$ to the divergence type equation \eqref{cond} are then obtained from the corresponding ones from the Beltrami equation by
$$\aligned
u_\gamma&=\Re(f_\mu)+i\Im(f_{-\mu})\\
\tilde{u}_\gamma&=\Im(f_{\mu})+i\Re(f_{-\mu}),\\
\endaligned$$
and they uniquely determine the pair $\left\{f_\mu,f_{-\mu}\right\}$ (and viceversa) in a stable way. We consider $u_\gamma$ as a function of $(z,k)$. In the $z$ plane, $u_\gamma$ satisfies the complex $\gamma$-harmonic equation, $$\div(\gamma\,\nabla u_\gamma)=0.$$
As a function of $k$, $u_\gamma$ is a solution to the following $\overline\partial$-type equation \begin{equation}\label{dbarequation}
\frac{\partial
u_\gamma}{\partial\overline{k}}(z,k)=-i\,\tau_\mu(k)\,\overline{u(z,k)}.
\end{equation}
Let us emphasize that $\tau_\mu(k)$ is independent of $z$.

\paragraph{Strategy for uniqueness:} Let $\gamma_1,\gamma_2$ be two conductivities. In \cite{AP}, the strategy for uniqueness is divided in the following steps:
\begin{description}
\item [$(i)$] Reduction to $\D$. 
\item [$(ii)$] If $\Lambda_{\gamma_1}=\Lambda_{\gamma_2}$, then $\tau_{\mu_1}=\tau_{\mu_2}$. 
\item [$(iii)$] Step $(ii)$ and (\ref{dbarequation})
imply that $u_{\gamma_{1}}=u_{\gamma_{2}}$. \item [$(iii)$]
Finally, condition $u_{\gamma_{1}}=u_{\gamma_2}$ is equivalent to
$Du_{\gamma_1}=Du_{\gamma_2}$, which holds as well if and only if
$\gamma_1=\gamma_2$
\end{description}
\noindent
First step is relatively easy since there is no regularity of $\gamma$ to preserve and thus one can extend by $0$ in $\D\setminus \Omega$. Second step is dealt with in \cite[Proposition 6.1]{AP}. It is shown that ${\cal H}_{\mu_1}={\cal H}_{\mu_2}$ implies $f_{\mu_1}(z,k)=f_{\mu_2}(z,k)$ for all $k\in\C$ and $|z|>1$. As a consequence $(ii)$ follows.\\
\\
The step $(iii)$ is more complex because uniqueness results and a priori estimates for pseudoanalytic equations in $\C$ like (\ref{dbarequation}) only hold if the coefficients or the solutions decay fast enough at $\infty$. Unfortunately the required decay properties for $\tau$ seem to require roughly one derivative for $\gamma$. However in \cite{AP} it is shown that in the measurable setting at least we obtain subexponential decay. That is, we can write,
\begin{equation}\label{decay}
u_{\gamma}(z,k)=e^{ik\left(z+\epsilon_\mu(z,k)\right)}
\end{equation}
for some function $\epsilon=\epsilon_\mu(z,k)$ satisfying
$$\lim_{k \to \infty} \|\epsilon_\mu (z,k)\|_{L^\infty(\C)}=0.$$
This would not be enough if we would consider equation (\ref{dbarequation}) for a single $z$. However, in \cite{AP} it is used that $u(z,k)$ solves an equation for each $z$. Further, one has asymptotic estimates for $u$ both in the $k$ (as above) and $z$ variables. Then, a clever topological argument in both variables shows that, with these estimates, $\tau_{\mu}$ determines the solution to (\ref{dbarequation}).

\paragraph{Strategy for stability:}In order to obtain stability, the natural
idea is to try to quantify in an uniform way the arguments for uniqueness. This was done
in \cite{BFR} for ${\cal C}^\alpha$ conductivities. Let us recall the argument and
specially the results which did not require regularity of $\gamma$ and would be
 instrumental for the current work. Let $\rho=\|\Lambda_{\gamma_1}-\Lambda_{\gamma_2}\|$. First
 one reduces to the unit disk by an argument which involves Whitney extension operator,
  the weak formulation (\ref{weakformulation}) and a result of Brown about recovering continuous
conductivities at the boundary (\cite{B1}). Next we investigate the
 relation between the corresponding  scattering transforms.

\begin{Theorem}[Stability of the scattering transforms]\label{stabilityD}
Let $\gamma_1,\gamma_2$ be conductivities in $\D$, with $\frac{1}{K}\leq\gamma_i\leq K$, and denote $\mu_i=\frac{1-\gamma_i}{1+\gamma_i}$. Then, for every $k \in \C$ it holds that
\begin{equation}
|\tau_{\mu_1}(k)-\tau_{\mu_2}(k)|\leq c\,e^{c|k|}\, \rho.
\end{equation}
where the constant $c$ depends only on $K$.
\end{Theorem}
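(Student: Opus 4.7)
The plan is to reduce $|\tau_{\mu_1}(k)-\tau_{\mu_2}(k)|$ to an estimate of $f_{\mu_1}-f_{\mu_2}$ and $f_{-\mu_1}-f_{-\mu_2}$ on $\partial\D$, and then to control these by the Lipschitz bound $\|{\cal H}_{\mu_1}-{\cal H}_{\mu_2}\|\lesssim\rho$ recalled from \cite[Proposition 2.2]{BFR}. Since the integrand defining $\tau_\mu$ is a pure $\partial_z$-derivative, the complex form of Stokes' theorem converts the area integral over $\D$ into a boundary integral on $\partial\D$:
$$
\tau_\mu(k)\;=\;c\int_{\partial\D}e^{i\overline{kz}}\bigl(\overline{f_\mu(z,k)}-\overline{f_{-\mu}(z,k)}\bigr)\,d\overline z
$$
for an explicit numerical constant $c$. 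Since on $\partial\D$ one has $|e^{i\overline{kz}}|\leq e^{|k|}$, this already accounts for part of the exponential factor, so the task reduces to bounding $\|f_{\pm\mu_1}-f_{\pm\mu_2}\|_{L^1(\partial\D)}$ by $e^{c|k|}\rho$.

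The next step is to quantify step $(ii)$ of the Astala--P\"aiv\"arinta uniqueness argument. Writing $f_{\mu_i}=U_i+iV_i$ with real $U_i,V_i$, the interior equivalence between the Beltrami and conductivity equations gives $V_i|_{\partial\D}={\cal H}_{\mu_i}(U_i|_{\partial\D})$, and so, setting $h=f_{\mu_1}-f_{\mu_2}$, a first relation on $\partial\D$ reads
$$
\Im h\;=\;{\cal H}_{\mu_1}(\Re h)+({\cal H}_{\mu_1}-{\cal H}_{\mu_2})\bigl(\Re f_{\mu_2}\bigr).
$$
A second relation comes from the exterior: since $\mu_1,\mu_2$ vanish on $\{|z|>1\}$, the function $m(z):=e^{-ikz}h(z)=M_{\mu_1}(z,k)-M_{\mu_2}(z,k)$ is holomorphic there with $m(z)={\cal O}(1/z)$ at infinity, so $m|_{\partial\D}$ lies in the exterior Hardy space (kernel of the Cauchy projector onto non-negative Fourier modes). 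Coupling the interior identity for $h$ with the Hardy constraint for the twisted function $e^{-ikz}h$ produces a linear equation on $\partial\D$ of the form $(\mathrm{Id}+T_{\mu_1,k})\,h=({\cal H}_{\mu_1}-{\cal H}_{\mu_2})(f_{\mu_2})$, where $T_{\mu_1,k}$ is bounded on $L^2(\partial\D)$ and built from ${\cal H}_{\mu_1}$, the Cauchy projector, and multiplication by $e^{\pm ikz}$.

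Invertibility of $\mathrm{Id}+T_{\mu_1,k}$, uniformly in $\mu_1$ (constants depending only on $K$) and at worst polynomially in $|k|$, is the effective content of the qualitative \cite[Prop.\,6.1]{AP}. Granting it, together with the standard Astala--P\"aiv\"arinta bounds $\|M_{\mu_i}(\cdot,k)\|_{L^\infty(\partial\D)}\leq C_K(1+|k|)^N$, hence $\|f_{\mu_i}(\cdot,k)\|_{H^{1/2}(\partial\D)}\leq C_K\,e^{c|k|}$, one obtains
$$
\|h\|_{L^2(\partial\D)}\;\leq\;C_K\,e^{c|k|}\,\|{\cal H}_{\mu_1}-{\cal H}_{\mu_2}\|.
$$
The same estimate holds for $f_{-\mu_1}-f_{-\mu_2}$. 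Plugging both into the boundary formula for $\tau_{\mu_1}-\tau_{\mu_2}$, using the bound on $|e^{i\overline{kz}}|$ on $\partial\D$, and the hypothesis $\|{\cal H}_{\mu_1}-{\cal H}_{\mu_2}\|\lesssim\rho$, yields the claimed $|\tau_{\mu_1}(k)-\tau_{\mu_2}(k)|\leq c\,e^{c|k|}\,\rho$.

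The main obstacle is exactly the quantitative inversion of $\mathrm{Id}+T_{\mu_1,k}$, that is, turning the qualitative Astala--P\"aiv\"arinta uniqueness on $\{|z|\geq 1\}$ into an operator-norm estimate uniform in $\mu_1\in L^\infty_K$. This requires using the uniform boundedness of ${\cal H}_\mu$ on $H^{1/2}(\partial\D)$ in tandem with the Cauchy/Szeg\H{o} projector acting on the $e^{ikz}$-twisted boundary values, tracking how the twist contributes the exponential factor in $|k|$. The exponential factor in the conclusion is unavoidable since $f_\mu(z,k)$ itself grows like $e^{|k|}$ on $\partial\D$.
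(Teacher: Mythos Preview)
The paper does not actually prove this theorem; it is quoted from \cite{BFR} (see the remark immediately after the statement, pointing to \cite[Theorem 4.6]{BFR}, where an explicit formula for $\tau_{\mu_1}-\tau_{\mu_2}$ is derived). So there is no ``paper's own proof'' to compare against here, only the reference.

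That said, your sketch has a genuine gap which you yourself identify: the quantitative invertibility of $\mathrm{Id}+T_{\mu_1,k}$ with an operator norm bound depending only on $K$ (and at worst exponentially on $|k|$) is precisely the heart of the matter, and you do not supply it. You write ``Granting it\ldots'' and then in the last paragraph concede that this is ``the main obstacle''. The qualitative result \cite[Prop.~6.1]{AP} gives uniqueness, i.e.\ injectivity of the relevant map, but going from injectivity to a uniform norm bound on the inverse requires either an explicit construction or a compactness argument, neither of which is automatic here since the family of coefficients $\mu_1$ ranges over an $L^\infty$-ball without further regularity. As written, the proposal is an outline with the decisive step missing.

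The route actually taken in \cite{BFR} avoids this abstract inversion by working more directly: one writes the boundary values of $f_{\mu_1}-f_{\mu_2}$ via an explicit boundary integral identity (this is the ``explicit formula'' alluded to in the paper, built on the boundary integral equation of \cite{AP2}), which expresses the difference in terms of $({\cal H}_{\mu_1}-{\cal H}_{\mu_2})$ acting on known data, together with a projection operator whose norm is controlled a priori. The exponential in $|k|$ then comes transparently from the size of $f_{\mu_i}$ on $\partial\D$. If you want to complete your argument, you should consult the boundary integral equation machinery in \cite{AP2} and \cite[\S4]{BFR} rather than attempt a bare operator-inversion.
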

\noindent The estimate is just pointwise but on  the positive side it holds for $L^{\infty}$ conductivities. In \cite[Theorem 4.6]{BFR} it is also given an explicit formula for the difference of scattering transforms which might be of  independent interest. Next we state a result that is implicitely proved in \cite[Theorem 5.1]{BFR}. There it is stated as a property of solutions to regular conductivities. However, in the proof the regularity  is only used to obtain the  decay in the $k$ variable. Because of this, here we state it separately as condition $(\ref{decayy})$.

\begin{Theorem}[A priori estimates in terms of scattering transform]\label{teoremaBFR}
Let $K\ge 1$ and  $\gamma_1$, $\gamma_2$ be conductivities on $\D$, with $\frac1K\leq\gamma_i\leq K$. Let
$$u_{\gamma_j}(z,k)=e^{ik\left(z+\epsilon_{\mu_j}(z,k)\right)},$$
denote, as in \eqref{decay}, the complex geometric optics solutions to \eqref{cond}. Let us assume that there exist positive constants $\alpha, B$ such that for eack $z,k\in\C$,
\begin{equation}\label{decayy}
 |\epsilon_{\mu_i}(z,k)|\leq \frac{B}{|k|^{\alpha}}.
\end{equation}
Then it follows that:
\begin{description}
\item [\textbf{A}] There exists new constants $b=b(K)$, $C=C(K,B)$, such that for every $z\in \C$ there exists $w\in\C$ satisfying:
\begin{itemize}
\item[(a)] $|z-w|\leq C B\,\left|\log\frac1\rho\right|^{-b\alpha}$, where $\rho=\|\Lambda_{\gamma_1}-\Lambda_{\gamma_2}\|$.
\item[(b)] $u_{\gamma_1}(z,k)=u_{\gamma_2}(w,k)$.
\end{itemize}
\item [\textbf{B}] For each $k \in \C$, there exists new constants $b=b(K)$ and $C=C(k,K)$ such that
\begin{equation}\label{linfty}
\|u_{\gamma_1}(z,k)-u_{\gamma_2}(z,k)\|_{L^{\infty}(\D,dA(z))}
\le\frac{C B^\frac1K}{|\log(\rho)|^{b\alpha}}.
\end{equation}
\end{description}
\end{Theorem}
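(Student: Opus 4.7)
The approach follows the quantitative Astala-P\"aiv\"arinta strategy from \cite{BFR}, where essentially the same statement is proved in a slightly stronger form. Fix $z\in\C$ and regard both $u_{\gamma_1}(z,\cdot)$ and $u_{\gamma_2}(z,\cdot)$ as functions of $k$. They satisfy the pseudoanalytic $\bar\partial_k$-equation \eqref{dbarequation} with coefficients $\tau_{\mu_1}$ and $\tau_{\mu_2}$ respectively, so the difference $D(k)=u_{\gamma_1}(z,k)-u_{\gamma_2}(z,k)$ satisfies the Vekua-type equation
$$\partial_{\bar k} D(k) = -i\,\tau_{\mu_1}(k)\,\overline{D(k)} - i\bigl(\tau_{\mu_1}(k)-\tau_{\mu_2}(k)\bigr)\,\overline{u_{\gamma_2}(z,k)},$$
whose inhomogeneity is controlled pointwise by Theorem \ref{stabilityD}.

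The plan proceeds in two stages. First, a Cauchy--Pompeiu representation of $D$ over a disk $\{|k|\leq R\}$ gives an estimate of the form $|D(k)|\lesssim e^{cR}\rho + E(R)$, where $E(R)$ is the contribution from the boundary $|k|=R$. Since the decay hypothesis $|\epsilon_{\mu_j}(z,k)|\leq B/|k|^\alpha$ forces both $u_{\gamma_j}(z,k)$ to be close to $e^{ikz}$ on that circle, $E(R)$ behaves like a positive power of $B/R^\alpha$. Balancing these two contributions by choosing $R\asymp|\log(1/\rho)|^\beta$ for an appropriate $\beta=\beta(K)$ then yields
$$|D(k)| \leq C\,B\,|\log(1/\rho)|^{-b\alpha}$$
for suitable constants $b=b(K)>0$ and $C=C(K,B)$. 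Second, for part (A) I invoke the fact that, for fixed $k$, $w\mapsto u_{\gamma_2}(w,k)$ is a $K$-quasiregular homeomorphism of $\C$ with asymptotics $e^{ikw}$ at infinity, so a topological/degree argument in the $z$-plane (parallel to the reasoning of \cite{AP}) produces a point $w$ with $u_{\gamma_2}(w,k)=u_{\gamma_1}(z,k)$ and $|w-z|$ controlled by $|D(k)|$ through the distortion estimates for quasiconformal maps. Part (B) is then an immediate consequence of (A): using the local $1/K$-H\"older continuity of $K$-quasiregular maps,
$$|u_{\gamma_1}(z,k)-u_{\gamma_2}(z,k)| = |u_{\gamma_2}(w,k)-u_{\gamma_2}(z,k)| \leq C\,|w-z|^{1/K},$$
and substituting the bound from (A) produces the prefactor $B^{1/K}$ claimed in (B) after absorbing the additional $1/K$ into the exponent $b$.

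The main obstacle is the first stage, where one must control the $\bar\partial_k$-equation on increasingly large disks $\{|k|\leq R\}$. The coefficients $\tau_{\mu_j}$ are only bounded pointwise in $k$ with no decay at infinity, the solutions $u_{\gamma_j}$ have subexponential growth rather than boundedness, and the optimization between the multiplicative factor $e^{cR}\rho$ coming from the stability of scattering and the polynomial remainder from \eqref{decayy} is exactly what dictates the final exponent $b\alpha$ in the logarithmic modulus of continuity. A secondary subtlety is that the degree/continuation argument producing the point $w$ must carry the asymptotic normalization through the quasiconformal machinery quantitatively, so that both the existence of $w$ and the estimate on $|w-z|$ are uniform in $k$ over the chosen range.
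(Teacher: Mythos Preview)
Your treatment of Part \textbf{B} is exactly the paper's: given $w$ from Part \textbf{A}, write $|u_{\gamma_1}(z,k)-u_{\gamma_2}(z,k)|=|u_{\gamma_2}(w,k)-u_{\gamma_2}(z,k)|$ and invoke the local $1/K$-H\"older continuity of $K$-quasiregular maps, picking up the factor $B^{1/K}$ and absorbing the extra $1/K$ into $b$. So there is nothing to add on that side.

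For Part \textbf{A} the paper gives no argument of its own; it simply cites \cite[Propositions~5.2 and~5.3]{BFR}. Your sketch of what lies behind that citation is in the right spirit---the $\bar\partial_k$-equation for the difference, the pointwise scattering bound from Theorem~\ref{stabilityD}, and an optimization over a radius $R$ in the $k$-plane against the decay hypothesis \eqref{decayy}---and for Part \textbf{B} that level of detail is enough, since only the conclusion of \textbf{A} is used. Two points in your description of \textbf{A} are worth correcting, however. First, $w\mapsto u_{\gamma_2}(w,k)=e^{ik(w+\epsilon_{\mu_2}(w,k))}$ is $K$-quasiregular but \emph{not} a homeomorphism of $\C$; it is the phase map $w\mapsto w+\epsilon_{\mu_2}(w,k)$ that is a normalized $K$-quasiconformal homeomorphism, and the degree/Rouch\'e argument in \cite{AP,BFR} is carried out at the level of these phase maps, not of $u_{\gamma_2}$ itself. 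Second, the point $w$ in \textbf{A} is produced \emph{independently of $k$} (so that (b) holds for all $k$ simultaneously), and this is exactly what the two-variable topological argument of \cite{AP} and its quantitative version in \cite{BFR} deliver; your phrasing (``for fixed $k$ \dots\ produces a point $w$'') obscures this and would not by itself give a $w$ that works uniformly in $k$. These are refinements of your outline rather than a different route.
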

\begin{proof}
The proof of \textbf{A} follows from \cite[Proposition 5.2]{BFR} and \cite[Proposition 5.3]{BFR}. Let us prove \textbf{B}. Given $z\in\C$, let  $w\in\C$ be given by part \textbf{A}. Then
$$|u_{\gamma_1}(z,k)-u_{\gamma_2}(z,k)|=|u_{\gamma_1}(z,k)-u_{\gamma_1}(w,k)|.$$
By the H\"older continuity of $K$-quasiregular mappings, together with $(a)$, we get
$$|u_{\gamma_1}(z,k)-u_{\gamma_2}(z,k)|\leq C(k,K)\, |z-w|^\frac1K \leq C(k,K)\,C^\frac1K\,B^\frac1K\left|\log\frac1\rho\right|^{-\frac{b\alpha}{K}}$$
and the desired estimate follows after renaming the constants.
\end{proof}

\noindent

\noindent
Unlike in the uniqueness arguments, going from $u_{\gamma_1}-u_{\gamma_2}$ to $D(u_{\gamma_1}-u_{\gamma_2})$ is more delicated in the stability setting, since functions do not control their derivatives in general. This is solved in \cite{BFR}, under H\"older regularity, using the following fact.

\begin{Theorem}[Schauder estimates]
Let $\gamma_i$, $i=1,2$ be conductivities on $\D$, such that $\frac1K\leq \gamma_i\leq K$ and $\|\gamma_1\|_{{\cal C}^\alpha(\D)}\le \Gamma_0$. As always, denote $\mu_i=\frac{1-\gamma_i}{1+\gamma_i}$, and let $f_{\mu_i}(z,k)$ be the corresponding complex geometric optics solutions to (\ref{rlinearbeltrami}). Then
\begin{enumerate}
\item For each $k\in\C$ there esists a constant $C=C(k)>0$ with
\begin{equation}\label{firstBFR}
\|f_{{\mu_1}}(\cdot, k)-f_{{\mu_2}}(\cdot, k)\|_{{\cal
C}^{1+\alpha}(\D)}\leq C(k).
\end{equation}
\item The jacobian determinant of $f_{\mu_i}(z,k)$ has a positive lower bound
$$J(z,f_{\mu_i}(\cdot,k)) \ge C(K,k,\Gamma_0).$$
\end{enumerate}
\end{Theorem}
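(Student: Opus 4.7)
The plan is to bootstrap regularity from the Beltrami equation via classical Schauder theory. Factorize $f_{\mu_i}(z,k) = e^{ikz}M_{\mu_i}(z,k)$ with $M_{\mu_i}(\cdot,k)-1=O(1/z)$; plugging this into $\overline\partial f=\mu\,\overline{\partial f}$ and using that $e^{ikz}$ is holomorphic in $z$, the normalized factor $M_{\mu_i}$ is seen to satisfy the inhomogeneous Beltrami equation
$$\overline\partial M - \bigl(\mu\,e^{-2i\operatorname{Re}(kz)}\bigr)\,\overline{\partial M} = -i\bar k\,\mu\,e^{-2i\operatorname{Re}(kz)}\,\overline M.$$
The transformed Beltrami coefficient $\mu\,e^{-2i\operatorname{Re}(kz)}$ still has modulus $\le\kappa$ and lies in $C^\alpha(\D)$ with $C^\alpha$-norm controlled by $|k|$ and $\|\mu\|_{C^\alpha}$, while the right-hand side is a zero-order term whose $C^\alpha(\D)$-norm is controlled by $|k|\,\|\mu\|_{C^\alpha}\,\|M\|_{C^\alpha(\D)}$.

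Next I would invoke the classical Schauder theorem for Beltrami equations (Vekua, or Iwaniec--Martin Ch.~15): any $W^{1,2}_{\mathrm{loc}}$ solution of $\overline\partial F=\tilde\mu\,\overline{\partial F}+h$ with $\|\tilde\mu\|_\infty\le\kappa<1$, $\tilde\mu\in C^\alpha$ and $h\in C^\alpha$ lies in $C^{1+\alpha}_{\mathrm{loc}}$, with norm $\le C(K,\alpha)\bigl(\|h\|_{C^\alpha}+\|F\|_\infty\bigr)$. The remaining ingredient is an $L^\infty$-bound on $M_{\mu_i}$ on $\D$, which is provided by the Astala--P\"aiv\"arinta construction of the complex geometric optics solutions and grows at most subexponentially in $|k|$. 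Applying the Schauder estimate to $M_{\mu_i}$ and multiplying back by $e^{ikz}$ yields $\|f_{\mu_i}(\cdot,k)\|_{C^{1+\alpha}(\D)}\le C(k,K,\alpha,\Gamma_0)$ for $i=1,2$, and the triangle inequality delivers the desired bound on the difference $f_{\mu_1}-f_{\mu_2}$.

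For the Jacobian lower bound, the starting point is that $M_{\mu_i}(\cdot,k)$ is a $K$-quasiconformal homeomorphism of $\C$: it is nonvanishing by Theorem \ref{existuniqbeltrami} (since $\operatorname{Re}(M_{-\mu}/M_\mu)>0$), quasiregular by construction, and principal by its normalization at infinity, hence a homeomorphism via Stoilow factorization. Consequently $f_{\mu_i}(z,k)=e^{ikz}M_{\mu_i}(z,k)$ is locally $K$-quasiregular with a nonvanishing differential (nonvanishing because a $C^{1+\alpha}$ $K$-quasiconformal map has $|\partial|^2\ge J/(1-\kappa^2)$ and cannot have vanishing derivative at an interior point), so $J(\cdot,f_{\mu_i}(\cdot,k))$ is continuous and strictly positive on $\overline\D$ for $k\ne 0$. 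The uniformity in $\mu$ (dependence only on $K,k,\Gamma_0$) then follows by a compactness argument: the set $\mathcal{M}=\{\mu:\|\mu\|_\infty\le\kappa,\,\|\mu\|_{C^\alpha}\le\Gamma_0\}$ is precompact in $C^\beta$ for every $\beta<\alpha$, the map $\mu\mapsto f_\mu(\cdot,k)$ is continuous from $\mathcal{M}$ into $C^{1+\beta}(\D)$ (by Part (1) and its proof), and so $\mu\mapsto \min_{\overline\D}J(\cdot,f_\mu(\cdot,k))$ is a continuous strictly positive function on the precompact set $\mathcal{M}$, hence uniformly bounded below.

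The main technical obstacle I anticipate is keeping careful track of the $|k|$-dependence of the Schauder constants: the zero-order term in the equation for $M_{\mu_i}$ grows linearly in $|k|$, the $L^\infty$-bound on $M_{\mu_i}$ is only subexponential in $|k|$, and their combined effect must be absorbed into the final constant $C(k)$. The compactness argument for the Jacobian lower bound is qualitative and does not produce an explicit modulus; an explicit lower bound would require Koebe-type distortion estimates for $C^{1+\alpha}$-quasiconformal mappings, but this is not needed for the stated theorem.
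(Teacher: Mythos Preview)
The paper does not actually prove this theorem: it is stated in Section~\ref{strat} as a result quoted from \cite{BFR}, immediately after the sentence ``This is solved in \cite{BFR}, under H\"older regularity, using the following fact.'' So there is no ``paper's own proof'' to compare against; you are reconstructing the argument from the cited reference.

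Your approach to Part~(1) is the natural one and is essentially what is done in \cite{BFR}: derive the inhomogeneous Beltrami equation for $M_{\mu_i}$, observe that the new coefficient $\mu\,e_{-k}(z)$ and the right-hand side inherit $C^\alpha$ bounds (with $k$-dependent constants), and invoke interior Schauder estimates for generalized Beltrami operators. One point to tidy: you also need a $C^\alpha$ (not just $L^\infty$) a priori bound on $M_{\mu_i}$ to feed into the Schauder machine, but this comes for free from the $K$-quasiregularity of $f_{\mu_i}$ and local H\"older continuity of quasiregular maps.

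Your argument for Part~(2) has a genuine gap. You assert that $M_{\mu_i}(\cdot,k)$ is a $K$-quasiconformal homeomorphism of $\C$, ``principal by its normalization at infinity.'' This is false: $M_{\mu_i}(z,k)\to 1$ as $|z|\to\infty$, so $M_{\mu_i}$ is bounded and cannot be a homeomorphism of $\C$. Moreover $M_{\mu_i}$ satisfies an \emph{inhomogeneous} equation, so it is not quasiregular in the sense needed for Stoilow factorization. The correct route is the representation $f_{\mu_i}=e^{ik\phi_i}$ from \cite[Lemma~7.1]{AP} (used in the present paper in the proof of Theorem~\ref{regcgos}), where $\phi_i$ \emph{is} a normalized $K$-quasiconformal homeomorphism solving \eqref{equationphi}. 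Then
\[
J(z,f_{\mu_i})=|k|^2\,|e^{ik\phi_i(z)}|^2\,J(z,\phi_i),
\]
the exponential factor is bounded below on $\D$ by \eqref{linftyphi}, and $J(z,\phi_i)>0$ because $\phi_i$ is a $C^{1+\alpha}$ homeomorphism. Your compactness argument for the uniform lower bound then applies to $\phi_i$ rather than to $M_{\mu_i}$, and here it does work: the Beltrami coefficient of $\phi_i$ lies in a set precompact in $C^\beta$ for $\beta<\alpha$ (this requires a moment's thought since that coefficient depends on $\phi_i$ itself, but the $C^\alpha$ bounds from Part~(1) close the loop).
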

\noindent
Now, to finish the proof of stability for H\"older continuous conductivities,
just note that an interpolation argument between $L^\infty$ and $C^{1+\alpha}$ gives Lipschitz bounds for $Df_{\mu_i}$. Thus, by $\mu=\frac{\overline{\partial}f}{\overline{\partial f}}$ and the second statement above, one obtains $L^\infty$ stability for $\mu_1-\mu_2$. The corresponding result for $\gamma_1-\gamma_2$ comes due to (\ref{mugamma}).

\paragraph{Strategy for stability under Sobolev regularity} In the current work
we will try to push the previous strategy to obtain $L^2$ stability. The previous analysis shows that we can rely in many of the results from \cite{AP,BFR}. In particular, we only have to prove that $\tau_\mu\mapsto\mu$ is continuous.\\
For this, we start by reducing the problem in Section \ref{reductionOmega=D}. We replace the assumption $\gamma_i\in W^{\alpha,p}(\Omega)$ by $\gamma_i\in W^{\beta,2}_0(\D)$, where $0<\beta<\min\{\frac12,\alpha\}$. For this, it is used there that characteristic functions of Lipschitz domains belong to $W^{\beta,q}(\C)$ whenever $\beta q<1$. \\
Then we follow by investigating the regularity of solutions of Beltrami equations with coefficients in fractional Sobolev spaces in order to obtain an estimate like (\ref{firstBFR}), with the ${\cal C}^{1+\alpha}$ norm replaced by the sharp Sobolev norm attainable under our assumption on the Beltrami coefficient (see Theorem \ref{regcgos}). It is also needed here to understand how composition with quasiconformal mappings affects fractional Sobolev spaces. As far as we know, the estimates here are new and of their own interest.\\
Afterwards we prove that our Sobolev assumption on $\mu$ suffices to get the uniform subexponential growth  of the geometric optics solutions needed in condition $(\ref{decayy})$ in Theorem~\ref{teoremaBFR} (this is done in Section \ref{assymptotics}, see Theorem \ref{nonlineardecay}). In fact we obtain a very clean   expression for the precise growth, achieving that the exponent depends linearly on  $\alpha$. Finally, in Section \ref{endofproof} we do the interpolation argument. Here we do not have enough regularity to control $W^{1,\infty}$ norms and
here is where one sees why we need to be happy with the control on $\|\mu_1-\mu_2\|_{L^2(\D)}$. Also we do not have a pointwise lower bound for the corresponding Jacobians which causes also difficulties.


\section{Fractional Sobolev spaces and Reduction to $\mu\in W^{\alpha,2}_0(\D)$}\label{reductionOmega=D}

 \subsection{On fractional Sobolev Spaces}

Following \cite[p.21]{Ad}, for any domain $\Omega$, we denote by
$W^{1,p}(\Omega)$ the class of $L^p(\Omega)$ functions $f$ with
$L^p(\Omega)$ distributional derivatives of first order. This
means that for any constant coefficients first order differential
operator $D$ there exists an $L^2(\Omega)$ function $Df$ such that
$$
\int_\Omega f\,D\varphi=-\int_\Omega Df\,\varphi
$$
whenever $\varphi\in {\cal C}^\infty$ is compactly supported
inside of $\Omega$. Similarly one can
define the Sobolev spaces $W^{m,p}(\Omega)$ of general integer order $m\geq 1$.\\
\\
It comes from the work of Calder\'on (see \cite[p.7]{AH} or \cite{St}) that every Lipschitz domain $\Omega$ is an \emph{extension domain}. That is, for any integer $m>0$ and any domain $\Omega'\supset\overline{\Omega}$ there exists a bounded linear extension operator
$$E_m:W^{p,2}(\Omega)\to W^{m,p}_0(\Omega')$$
and therefore for every function $f\in W^{m,2}(\Omega)$ there is another function $E_mf\in W^{m,p}(\Omega')$ such that $E_mf_{|\Omega}=f$. Of course, $E_mf\in W^{1,p}(\C)$.\\
\\
Let us introduce for general domains $\Omega$ and any real number $0<\alpha<1$ the complex interpolation space
$$
W^{\alpha,p}(\Omega)=[L^p(\Omega), W^{1,p}(\Omega)]_\alpha.
$$
The closure of ${\cal C}^\infty_0(\Omega)$ (${\cal C}^\infty$ functions with compact support contained in $\Omega$) in $W^{\alpha, p}(\Omega)$ is denoted by $W^{\alpha, p}_0(\Omega)$. Functions in $W^{\alpha,p}_0(\Omega)$ can be extended by zero to the whole plane, and the extension belongs to $W^{\alpha,p}(\C)$. Thus, we can identify any function in $W^{\alpha, p}_0(\Omega)$ with its extension in $W^{\alpha,p}(\C)$.\\
\\
When $\Omega$ is an extension domain, an interpolation argument shows (see \cite[p.222]{Ad}) that $W^{\alpha,p}(\Omega)$ coincides with the space of restrictions to $\Omega$ of functions in $W^{\alpha,p}(\C)$. That is, to each function $u\in W^{\alpha,p}(\Omega)$ one can associate a function $\tilde{u}\in W^{\alpha,p}(\C)$ such that $\tilde{u}_{|\Omega}=u$ and $\|\tilde{u}\|_{W^{\alpha,p}(\C)}\leq  C\,\|u\|_{W^{\alpha,p}(\Omega)}$. \\
\noindent
We have chosen just one way to introduce the fractional Sobolev spaces. In the rest of the subsection, we discuss the alternative characterizations and properties of these spaces needed in the rest of the paper. Two good sources for the basics of this theory are \cite[Chapter 7]{Ad}, \cite[Chapter 4]{St}.

\paragraph{Fourier side.}
For $p=2$, it is easy to see that,
$$
W^{\alpha,2}(\C)=\left\{f\in L^2(\C);
(1+|\xi|^2)^\frac{s}{2}\,\widehat{f}(\xi)\in L^2(\C)\right\}
$$
and that this agrees with the space of Bessel potentials
$$
W^{\alpha,2}(\C)=\left\{f=G_\alpha\ast g; g\in L^2(\C)\right\}
$$
where $G_\alpha$ is the Bessel Kernel \cite[p.10]{AH}. For $p\neq 2$ the situation is more complicated but it can be shown that
$$
W^{\alpha,p}(\C)=\left\{f\in L^p(\C); \left((1+|\xi|^2)^\frac{s}{2}\,\widehat{f}(\xi)\right)^\wedge \in L^p(\C)\right\}.$$

\paragraph{Integral modulus of continuity}
We define the $L^p$-difference of a function $f$ by
\begin{equation}
\omega_p(f)(y)=\|f(\cdot+y)-f(\cdot)\|_{L^p(\C)}.
\end{equation}
( see  \cite[Chapter V]{St} Then the Besov spaces $B^{p,q}_\alpha(\R^n)$ are defined by
$$B^{p,q}_\alpha(\R^n)=\{f \in L^p(\R^n): \int_{\R^n}  \omega_p(f)(y)^q |y|^{-(n+\alpha
q)}<\infty.$$
There are many relations between Besov and fractional Sobolev spaces. We will need the following two facts,
\begin{equation}\label{BesovSobolev}
B^{2,2}_\alpha=W^{\alpha,2},\hspace{2cm}
W^{\alpha,p} \subset B^{p,2}_\alpha \quad (p<2).
\end{equation}
For a proof see \cite[Chapter 7]{Ad} or \cite[Chapter V]{St}.

\paragraph{Leibniz Rule}[\cite{KePoVe}]
\begin{Lemma}\label{KePV} Let $\alpha\in(0,1)$ and $p\in(1,\infty)$.
\begin{enumerate}
\item[(a)] Let $f,g\in {\cal C}^\infty_0(\C)$. Then,
$$
\|D^\alpha(fg)-f\,D^\alpha(g)-g\,D^\alpha(f)\|_p\leq
C\,\|D^{\alpha_1}(f)\|_{p_1}\,\|D^{\alpha_2}(g)\|_{p_2}
$$
whenever $\alpha_1,\alpha_2\in[0,\alpha]$ are such that
$\alpha_1+\alpha_2=\alpha$ and $p_1,p_2\in(1,\infty)$ satisfy
$\frac{1}{p_1}+\frac{1}{p_2}=\frac{1}{p}$. \item[(b)] Let
$f,g\in{\cal C}^\infty_0(\C)$. Then
$$
\|D^\alpha(f\circ g)\|_p\leq C\,\|Df(g)\|_{p_1}\,\|D^\alpha
g\|_{p_2}
$$
whenever $p_1,p_2\in(1,\infty)$ satisfy
$\frac{1}{p_1}+\frac{1}{p_2}=\frac{1}{p}$. \item[(c)] Let
$f,g\in{\cal C}^\infty_0(\C)$. Then
$$
\|D^\alpha(fg)-f\,D^\alpha(g)-g\,D^\alpha(f)\|_p\leq
C\,\|D^{\alpha}(f)\|_p\,\|g\|_{\infty}
$$
whenever $0<\alpha<1$ and $1<p<\infty$.
\end{enumerate}
\end{Lemma}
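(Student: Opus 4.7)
The plan is to follow the paraproduct approach of Kenig--Ponce--Vega, which reduces every assertion to frequency localized estimates via the Littlewood--Paley decomposition. Fix a smooth dyadic partition of unity $1=\sum_{j\in\Z}\psi_j(\xi)$ with $\psi_j$ supported in $\{|\xi|\simeq 2^j\}$, and write $\Delta_j f = (\psi_j \widehat f)^\vee$, $S_j f = \sum_{k\leq j-3}\Delta_k f$. Bony's decomposition
$$
fg=\pi_f(g)+\pi_g(f)+R(f,g),\qquad \pi_f(g)=\sum_j (S_{j-3}f)(\Delta_j g),\ R(f,g)=\sum_{|i-j|\leq 2}(\Delta_i f)(\Delta_j g),
$$
will be the unifying tool.

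For part (a), the commutator $D^\alpha(fg)-f\,D^\alpha g-g\,D^\alpha f$ has no contribution from the "perfectly unbalanced" paraproducts at high order: the terms $D^\alpha \pi_f(g)-\pi_f(D^\alpha g)$ and $D^\alpha \pi_g(f)-\pi_g(D^\alpha f)$ are genuine commutators, and a standard Coifman--Meyer type estimate shows each is bounded by $\|D^{\alpha_1}f\|_{p_1}\|D^{\alpha_2}g\|_{p_2}$ whenever $\alpha_1+\alpha_2=\alpha$, using Bernstein's inequality to trade derivatives between frequency-localized pieces. The diagonal remainder $D^\alpha R(f,g)$ is treated directly, distributing $\alpha$ derivatives between the two factors and applying Plancherel (when $p=2$) or the Fefferman--Stein vector-valued maximal inequality (for general $p$) together with H\"older in the square function characterization of $L^p$.

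For part (b), the chain rule, I would write $f\circ g - f(S_{j-3}g) \approx f'(S_{j-3}g)\,(\Delta_j g)$ on frequency block $j$, so that modulo a lower-order remainder
$$
\Delta_j(f\circ g)\simeq f'(S_{j-3}g)\,\Delta_j g.
$$
Squaring, summing in $j$ with $2^{2j\alpha}$ weights, and using the Littlewood--Paley characterization of $\dot W^{\alpha,p}$ together with a pointwise bound $|f'(S_{j-3}g)|\lesssim M(f'(g))$ (by uniform smoothness of $f$) yields $\|D^\alpha(f\circ g)\|_p\lesssim \|f'(g)\|_{p_1}\|D^\alpha g\|_{p_2}$. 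Part (c) is a direct corollary of (a) with $\alpha_2=0$, $p_2=\infty$, noting that the bound is uniform once one observes that the remainder and paraproduct pieces only require $g\in L^\infty$ when no derivative falls on $g$.

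The main obstacle is the $p\neq 2$ endpoint in (a) and (b): one must avoid losing a logarithm in the vector-valued inequalities, which forces careful use of the Coifman--Meyer multiplier theorem rather than a crude Cauchy--Schwarz on the Littlewood--Paley pieces. This is precisely the technical content of \cite{KePoVe}, so in the paper it is cleanest to simply invoke their statement rather than reprove it; the sketch above indicates why the hypotheses $\alpha_1+\alpha_2=\alpha$ and $\frac{1}{p_1}+\frac{1}{p_2}=\frac{1}{p}$ with $1<p,p_1,p_2<\infty$ are the natural ones.
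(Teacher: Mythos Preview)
The paper gives no proof of this lemma at all: it is stated under the heading ``Leibniz Rule'' with the citation \cite{KePoVe} and used as a black box. You correctly anticipated this in your closing remark. Your paraproduct sketch is a fair outline of the Kenig--Ponce--Vega argument, so in that sense your approach and the paper's ``approach'' (namely, citation) coincide.

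One small point worth flagging: your derivation of (c) as ``a direct corollary of (a) with $\alpha_2=0$, $p_2=\infty$'' is not quite licit as stated, since (a) is formulated only for $p_1,p_2\in(1,\infty)$. The $L^\infty$ endpoint does go through in the paraproduct scheme for exactly the reason you indicate (no derivative falls on $g$, so only $\|g\|_\infty$ is needed to bound the relevant pieces), but this is a separate verification rather than a specialization of (a); this is why \cite{KePoVe} and the present paper record (c) as an independent statement.
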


\begin{Rem}\label{LebnitzHolder}
From property (a) and (c) it follows the generalized Leibniz rule
\begin{equation}
\|D^\alpha(f\,g)\|_p\leq\|D^\alpha
f\|_{p_1}\,\|g\|_{p_2}+\|D^\alpha g\|_{p_3}\,\|f\|_{p_4}
\end{equation}
whenever $1\leq p_1,p_2,p_3,p_4\leq\infty$ and
$\frac{1}{p}=\frac{1}{p_1}+\frac{1}{p_2}=\frac{1}{p_3}+\frac{1}{p_4}$.
Moreover if $\textrm{suppt}(f,g) \in \D$ we have that
\begin{equation}\label{KPVcompactsupport}
\|D^\alpha(f\,g)\|_p\leq\|D^\alpha
f\|_{p_1}\,\|g\|_{p_2}+\|D^\alpha
g\|_{L^{p_3}(\D)}\,\|f\|_{L^{p_4}(\D)}
\end{equation}
\end{Rem}

\paragraph{Pointwise Inequalities}

\begin{Lemma}[Pointwise inequalities, \cite{Sw}]
If $f\in W^{\alpha,p}(\C)$, $\alpha>0$, $1<p<\infty$, then for
each $0<\lambda<\alpha$ there exists a function $g=g_\lambda\in
L^{p_\lambda}(\C)$, $p_\lambda=\frac{2p}{2-(\alpha-\lambda)p}$
such that
\begin{equation}\label{pointwise}
|f(z)-f(w)|\leq |z-w|^\lambda\,\left(g(z)+g(w)\right)
\end{equation}
for almost every $z,w\in\C$. Furthermore, we have that
$$\|g\|_{L^{p_\lambda}(\C)}\leq\,C_\lambda\,\|f\|_{W^{\alpha,p}(\C)},$$
and the constant $C_\lambda$ remains bounded as
$\lambda\to\alpha$.
\end{Lemma}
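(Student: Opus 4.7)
The plan is to combine the Bessel potential representation of $W^{\alpha,p}(\C)$ with a Hedberg-type pointwise estimate and the Hardy--Littlewood--Sobolev embedding for the fractional maximal operator. First I would invoke the identification of $W^{\alpha,p}(\C)$ with the space of Bessel potentials recalled earlier in the section, writing $f=G_\alpha\ast h$ for some $h\in L^p(\C)$ with $\|h\|_{L^p(\C)}\leq C\,\|f\|_{W^{\alpha,p}(\C)}$. Then
$$
f(z)-f(w)=\int_\C\bigl(G_\alpha(z-y)-G_\alpha(w-y)\bigr)\,h(y)\,dA(y),
$$
and I would split this integral into the region $B(z,2|z-w|)\cup B(w,2|z-w|)$ and its complement.

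On the two small balls I would bound $|G_\alpha(z-y)-G_\alpha(w-y)|$ by $|G_\alpha(z-y)|+|G_\alpha(w-y)|$ and use the local singularity estimate $|G_\alpha(\zeta)|\lesssim|\zeta|^{\alpha-2}$. On the complement, where $|z-y|\approx|w-y|\geq 2|z-w|$, I would use a mean value bound $|G_\alpha(z-y)-G_\alpha(w-y)|\lesssim|z-w|\,|z-y|^{\alpha-3}$, with the exponential decay of the Bessel kernel at infinity making the tail harmless. Balancing these two contributions via a parameter of size $|z-w|$ is Hedberg's classical argument, and for any fixed $\lambda\in(0,\alpha)$ it produces the pointwise bound
$$
|f(z)-f(w)|\leq C\,|z-w|^\lambda\bigl(M_{\alpha-\lambda}h(z)+M_{\alpha-\lambda}h(w)\bigr),
$$
where $M_\beta h(x)=\sup_{r>0}r^{\beta-2}\int_{B(x,r)}|h|\,dA$ denotes the fractional maximal function of order $\beta$. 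I would then set $g=C\,M_{\alpha-\lambda}h$, which directly yields the pointwise inequality \eqref{pointwise}.

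To obtain the Lebesgue bound on $g$, I would use the pointwise domination $M_\beta h\leq C\,I_\beta(|h|)$ by the Riesz potential of order $\beta$, together with the Hardy--Littlewood--Sobolev theorem: for $0<\beta<\frac{2}{p}$ one has $I_\beta\colon L^p(\C)\to L^q(\C)$ boundedly, with $\frac{1}{q}=\frac{1}{p}-\frac{\beta}{2}$. Taking $\beta=\alpha-\lambda$ gives exactly $q=\frac{2p}{2-(\alpha-\lambda)p}=p_\lambda$, so that $\|g\|_{L^{p_\lambda}(\C)}\leq C_\lambda\,\|h\|_{L^p(\C)}\leq C_\lambda\,\|f\|_{W^{\alpha,p}(\C)}$. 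The only delicate point is the behaviour of $C_\lambda$ as $\lambda\nearrow\alpha$: in this limit $\beta\to 0$, the fractional maximal function degenerates to the standard Hardy--Littlewood maximal operator, $p_\lambda\to p$, and the Muckenhoupt constants stay bounded; Hedberg's balancing constant is an elementary expression in $\alpha$ and $\lambda$ which is also uniformly controlled as $\lambda\to\alpha$. This gives the required uniform control on $C_\lambda$ and completes the scheme.
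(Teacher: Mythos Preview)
The paper does not supply a proof of this lemma: it is quoted directly from Swanson \cite{Sw} and used as a black box. Your proposal is a correct and standard route to the result---the Hedberg splitting of $G_\alpha(z-y)-G_\alpha(w-y)$ into near and far parts, leading to the fractional maximal function $M_{\alpha-\lambda}h$ as the dominating function $g$, followed by the Hardy--Littlewood--Sobolev embedding to place $g$ in $L^{p_\lambda}$---and the exponent arithmetic and the uniform behaviour of the constants as $\lambda\nearrow\alpha$ are handled correctly. This is in fact essentially the argument Swanson uses (he works more generally with higher-order differences and arbitrary dimension, but the mechanism is the same), so there is nothing to correct.
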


\subsection{Reduction to $p=2$}\label{pequals2}

This reduction relies on the fact that $\mu \in L^\infty(\C) \cap W^{\alpha,p}(\C)$ and the following interpolation Lemma.

\begin{Lemma}
Let $f\in W^{\alpha_0,p_0}\cap W^{\alpha_1,p_1}$, where $1< p_0,p_1<\infty$, $0\leq\alpha_0,\alpha_1\leq 1$, and $\theta\in(0,1)$. Then,
$$
\|f\|_{W^{\alpha,p}}\leq\|f\|_{W^{\alpha_0,p_0}}^\theta\,\|f\|_{W^{\alpha_1,p_1}}^{1-\theta}
$$
where
$$\alpha=\theta\,\alpha_0+ (1-\theta)\,\alpha_1\hspace{.5cm}\text{ and }\hspace{.5cm}\frac{1}{p}=\frac{\theta}{p_0}+\frac{1-\theta}{p_1}.$$
Furthermore, if either $p_0=\infty$ or $p_1=\infty$, then the above inequality holds true by replacing $W^{\alpha_i,p_i}$ by the Riesz potentials space $I_{\alpha_i}\ast BMO$.
\end{Lemma}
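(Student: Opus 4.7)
The plan is to deduce this interpolation inequality directly from Calder\'on's complex interpolation identity for the Sobolev (Bessel potential) spaces, combined with the fundamental estimate of the complex method. Since the spaces $W^{\alpha,p}(\C)$ have already been introduced as the complex interpolation spaces $[L^p(\C),W^{1,p}(\C)]_\alpha$ and identified with Bessel potentials via the Fourier characterization recalled above, the whole content of the lemma is a reiteration statement.

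First I would recall, with an appropriate citation to Bergh--L\"ofstr\"om or Triebel, the interpolation identity for Bessel potential spaces:
$$
[W^{\alpha_0,p_0}(\C),\,W^{\alpha_1,p_1}(\C)]_{\eta}=W^{\alpha,p}(\C),
$$
whenever $\alpha=(1-\eta)\alpha_0+\eta\,\alpha_1$ and $\frac1p=\frac{1-\eta}{p_0}+\frac{\eta}{p_1}$, with equivalent norms. This is a reiteration/Wolff-type result that, for the two parameter range considered here, can be obtained either by direct application of the reiteration theorem starting from $[L^{p},W^{1,p}]_{\alpha}=W^{\alpha,p}$, or by verifying that the Bessel potential operator $(1-\Delta)^{s/2}$ exchanges $W^{s+\alpha,p}$ and $W^{\alpha,p}$ isometrically and so reduces the identity to the already known formula $[L^{p_0},L^{p_1}]_\eta=L^{p}$.

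Next, setting $\eta=1-\theta$, I would appeal to the basic estimate of the complex interpolation method, namely that for any Banach couple $(X_0,X_1)$ and any $f\in X_0\cap X_1$,
$$
\|f\|_{[X_0,X_1]_\eta}\;\leq\;\|f\|_{X_0}^{1-\eta}\,\|f\|_{X_1}^{\eta}.
$$
Applied with $X_0=W^{\alpha_0,p_0}(\C)$ and $X_1=W^{\alpha_1,p_1}(\C)$ this yields precisely
$$
\|f\|_{W^{\alpha,p}}\;\leq\;\|f\|_{W^{\alpha_0,p_0}}^{\theta}\,\|f\|_{W^{\alpha_1,p_1}}^{1-\theta},
$$
with the correct matching of parameters, which is the claimed estimate.

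For the endpoint case in which either $p_0=\infty$ or $p_1=\infty$, the space $L^\infty$ does not behave well under complex interpolation, so I would replace it by $BMO$ at the endpoint and use the Fefferman--Stein-type interpolation identity $[L^{p_0},BMO]_\eta=L^{p}$ with $\frac1p=\frac{1-\eta}{p_0}$, transported through $(1-\Delta)^{-\alpha/2}$ to obtain the corresponding identity with $L^{p_i}$ replaced by $I_{\alpha_i}\ast BMO$. The main obstacle throughout is purely bookkeeping: one must justify that the reiteration identity is actually available on the whole parameter range claimed (including the endpoints $\alpha_i\in\{0,1\}$ and the BMO substitution), for which a clean reference to the interpolation literature suffices. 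Once the interpolation identity is in hand, the estimate itself is an immediate consequence of the fundamental inequality of the complex method and requires no further computation.
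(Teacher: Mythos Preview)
Your proposal is correct and follows essentially the same approach as the paper: both invoke the complex interpolation identity $[W^{\alpha_0,p_0},W^{\alpha_1,p_1}]_\eta=W^{\alpha,p}$ (the paper cites Triebel for this) together with the fundamental inequality of the complex method, and handle the endpoint by replacing $L^\infty$ with $BMO$ (the paper cites Reimann--Rychener). Your write-up is somewhat more explicit about the reiteration argument and the role of the Bessel potential isomorphism, but the substance is the same.
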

\begin{proof}
It is well known that the complex interpolation method gives $$[W^{\alpha_0,p_0},W^{\alpha_1,p_1}]_\theta=W^{\alpha,p}$$
whenever $1<p<\infty$ (for the proof of this, see for instance \cite{Tr}). For $p=\infty$, the same result holds true if we replace $W^{\alpha,\infty}$ by the space of Riesz potentials $I_\alpha\ast BMO$ of $BMO$ functions (for this, see \cite{RR}).
\end{proof}
\noindent
Let $\mu$ be a compactly supported Beltrami coefficient. Then, it belongs both to $L^1(\C)$ and $L^\infty(\C)$. If we also assume that $\mu\in W^{\alpha,p}(\C)$ for some $\alpha, p$, then we can use the above interpolation to see that $\mu\in W^{\beta,q}(\C)$, for any $1<q<\infty$ and some $0<\beta<\alpha$. We are particularly interested in $q=2$.

\begin{Lemma}\label{pneq2}
Suppose that $\mu\in W^{\alpha, p}(\Omega)\cap L^\infty(\Omega)$ for some $p>1$ and $0<\alpha<1$. Then,
\begin{itemize}
\item For any $0\leq\theta\leq 1$, $$\|\mu\|_{W^{\alpha\theta,\frac{p}{\theta}}(\Omega)}\lesssim\,\|\mu\|_{L^\infty(\Omega)}^{1-\theta}\,\|\mu\|_{W^{\alpha, p}(\Omega)}^{\theta}.$$
\item For any $0\leq\theta\leq 1$, $$\|\mu\|_{W^{\theta\alpha,\frac{p}{(1-\theta)p+\theta}}(\Omega)}\lesssim\,\|\mu\|_{L^1(\Omega)}^{1-\theta}\,\|\mu\|_{W^{\alpha, p}(\Omega)}^{\theta}.$$
\item One always has
$$\|\mu\|_{W^{\beta,2}(\Omega)}\leq C(K,p)\,\|\mu\|_{W^{\alpha,p}(\Omega)}^{p^\ast/2},$$
where $\beta=\frac{\alpha p^\ast}{2}$ and $p^\ast=\min\{p,\frac{p}{p-1}\}$.
\end{itemize}
\end{Lemma}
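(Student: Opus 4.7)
The plan is to derive all three bullets from the preceding interpolation lemma at carefully chosen endpoints, combined with the elementary facts that $\|\mu\|_{L^\infty(\Omega)}\leq\kappa:=\frac{K-1}{K+1}<1$ and that $\mu$ is compactly supported.

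For the first bullet I would apply the lemma with $(\alpha_0,p_0)=(\alpha,p)$ and the extended endpoint $(\alpha_1,p_1)=(0,\infty)$, where $W^{0,\infty}$ is read as $I_0\ast BMO=BMO$. The interpolated indices come out to $\theta\alpha$ and $p/\theta$ respectively, and since $\|\mu\|_{BMO}\lesssim\|\mu\|_{L^\infty}$ the stated bound is immediate. The second bullet is analogous with the opposite endpoint $(\alpha_1,p_1)=(0,1)$, using $W^{0,1}=L^1$; the computation $\frac{1}{p'}=\frac{\theta}{p}+(1-\theta)$ yields exactly the exponent $\frac{p}{(1-\theta)p+\theta}$ that appears in the statement. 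A mild point is that the stated lemma requires $p_i>1$; I would bypass this by first applying it at $(0,1+\varepsilon)$ and letting $\varepsilon\to 0^+$, using $\|\mu\|_{L^{1+\varepsilon}(\Omega)}\to\|\mu\|_{L^1(\Omega)}$ for the bounded, compactly supported $\mu$.

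The third bullet is then a direct consequence of the first two, choosing in each case the value of $\theta$ that sends the interpolated Lebesgue exponent to $q=2$. I would split into two cases. When $p<2$, so $p^\ast=p$, invoke the first bullet with $\theta=p/2\in(0,1)$: then $p/\theta=2$ and $\theta\alpha=\alpha p/2=\alpha p^\ast/2=\beta$, while $\|\mu\|_{L^\infty}^{1-\theta}\leq\kappa^{1-p/2}\leq C(K)$. When $p\geq 2$, so $p^\ast=p/(p-1)$, invoke the second bullet with $\theta=\frac{p}{2(p-1)}\in(\tfrac12,1]$: one checks that $\frac{p}{(1-\theta)p+\theta}=2$ and $\theta\alpha=\beta$, while the $L^1$ factor is bounded by $(\kappa|\Omega|)^{1-\theta}\leq C(K)$ since at this stage of the argument $\Omega$ has been reduced to $\D$. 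In both cases the exponent of $\|\mu\|_{W^{\alpha,p}}$ equals $\theta=p^\ast/2$, yielding the desired inequality.

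The only delicate step I foresee is the $p_1=1$ endpoint in the second bullet; the approximation argument above should suffice, but an equivalent route is via the real-interpolation identification $(L^1,W^{\alpha,p})_{\theta,p'}=B^{p',p'}_{\theta\alpha}$ together with the embeddings between Besov and fractional Sobolev spaces recorded in \eqref{BesovSobolev}. Everything else amounts to bookkeeping of exponents.
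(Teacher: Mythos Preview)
Your proposal is correct and follows essentially the same route as the paper: interpolate against $BMO$ for the first bullet, against $L^{1+\varepsilon}$ (letting $\varepsilon\to 0$) for the second, and then specialize $\theta=p^\ast/2$ to obtain the third. Your explicit case split $p<2$ versus $p\geq 2$ and the verification of the exponents simply spell out what the paper compresses into the single sentence ``letting $\theta=\frac{p^\ast}{2}$ above.''
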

\begin{proof}
The first inequality comes easily interpolating between $BMO(\Omega)$ and $W^{\alpha, p}(\Omega)$ (see \cite{RR} for more details). For the second, simply notice that compactly supported Beltrami coefficients belong to all $L^p(\Omega)$ spaces, $p>1$, so one can do the same between $L^{1+\varepsilon}(\Omega)$ ($\varepsilon$ as small as desired) and $W^{\alpha, p}(\Omega)$. The last statement is obtained by letting $\theta=\frac{p^\ast}{2}$ above.
\end{proof}

\subsection{Reduction to $\Omega=\D$ and $\mu\in W^{\alpha, p}_0(\D)$}

The proof of the following lemma relies in the fact that
characteristic functions of Lipschitz belong to $W^{\alpha,2}$ for
each $\alpha<\frac{1}{2}$.

\noindent

\begin{Theorem}\label{extensionzero}
Let $\Omega$ be a Lipschitz domain, strictly included in $\D$. Let $\mu\in W^{\alpha,2}(\Omega)$. Define
$$\tilde{\mu}=\begin{cases}
\mu&\Omega\\0&\C\setminus\Omega
\end{cases}.$$
Then, $\tilde{\mu}\in W^{\beta,2}_0(\C)$ for $\beta<\min\{\alpha,\frac12\}$ and $$\|\tilde{\mu}\|_{W^{\beta,2}(\C)}\leq
C\,\|\mu\|_{W^{\alpha,2}(\C)}.$$
Analogous results can be stated for the extensions by $1$ of $\gamma_i$.
\end{Theorem}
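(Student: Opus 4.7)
My plan is to realize $\tilde\mu$ as a product $\chi_\Omega\cdot E\mu$, where $E\mu$ is a Sobolev extension of $\mu$ to the whole plane, and then exploit the fractional Leibniz rule. Because $\Omega$ is Lipschitz it is a Sobolev extension domain, so there exists $E\mu\in W^{\alpha,2}(\C)$ with $E\mu_{|\Omega}=\mu$ and $\|E\mu\|_{W^{\alpha,2}(\C)}\leq C_\Omega\,\|\mu\|_{W^{\alpha,2}(\Omega)}$. In our setting $\mu$ is a Beltrami coefficient, so $\mu\in L^\infty(\Omega)$ with $\|\mu\|_\infty\leq\kappa<1$; precomposing $E\mu$ with the radial projection onto $\{|w|\leq\kappa\}\subset\C$ (a $1$-Lipschitz map of the target, which therefore does not increase the $L^2$-modulus of continuity nor the $B^{2,2}_\alpha=W^{\alpha,2}$ norm) we may and do assume in addition that $\|E\mu\|_{L^\infty(\C)}\leq\kappa$.

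Next, a standard computation shows that for any bounded Lipschitz $\Omega$,
\begin{equation*}
\chi_\Omega\in W^{\beta,2}(\C)\qquad\text{for every }\beta<\tfrac12,
\end{equation*}
with norm depending only on $\Omega$: one verifies $\omega_2(\chi_\Omega)(y)\lesssim |y|^{1/2}$ by estimating the measure of the symmetric difference $\Omega\bigtriangleup(\Omega+y)$ along a boundary tube, and then applies the identification (\ref{BesovSobolev}).

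Now fix any $\beta<\min\{\alpha,\tfrac12\}$. Since $\tilde\mu=\chi_\Omega\cdot E\mu$ a.e., the Leibniz estimate of Remark \ref{LebnitzHolder} with $(p_1,p_2,p_3,p_4)=(2,\infty,2,\infty)$ gives
\begin{equation*}
\|D^\beta\tilde\mu\|_{L^2(\C)}\leq \|D^\beta\chi_\Omega\|_{L^2(\C)}\,\|E\mu\|_{L^\infty(\C)}+\|\chi_\Omega\|_{L^\infty(\C)}\,\|D^\beta E\mu\|_{L^2(\C)},
\end{equation*}
which, combined with $\|\tilde\mu\|_{L^2(\C)}\leq\|\mu\|_{L^2(\Omega)}$ and the embedding $W^{\alpha,2}\hookrightarrow W^{\beta,2}$, is finite and controlled by $\|\mu\|_{W^{\alpha,2}(\Omega)}$. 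Finally, because $\supp\tilde\mu\subset\overline\Omega$ is a compact subset of $\D$ by strict inclusion, the mollifications $\tilde\mu\ast\varphi_\varepsilon$ belong to ${\cal C}^\infty_0(\D)$ for $\varepsilon$ small and converge to $\tilde\mu$ in $W^{\beta,2}(\C)$, so $\tilde\mu\in W^{\beta,2}_0(\C)$. The analogous statement for the extension of $\gamma_i$ by $1$ follows by applying the same argument to $\gamma_i-1\in W^{\alpha,2}(\Omega)$, using $\tilde\gamma_i-1=\chi_\Omega(\gamma_i-1)$.

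The main obstacle is reconciling the two distinct sources of fractional regularity: the jump of $\chi_\Omega$ across $\partial\Omega$ forces $\beta<\tfrac12$, while the extension only delivers $W^{\alpha,2}$. The $(2,\infty)$-endpoint of the fractional Leibniz rule absorbs both losses cleanly, but only once $E\mu$ is known to be bounded, which is precisely what the truncation step secures.
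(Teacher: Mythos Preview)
Your proof is correct and follows the same strategy as the paper: extend $\mu$ to $W^{\alpha,2}(\C)$, realize $\tilde\mu=\chi_\Omega\cdot E\mu$, invoke the fractional Leibniz rule, and feed in $\chi_\Omega\in W^{\beta,2}$ for $\beta<\tfrac12$. Your execution is in fact more careful on two points the paper leaves implicit: you secure $E\mu\in L^\infty$ by the $1$-Lipschitz radial truncation (the paper tacitly relies on $\mu$ being a Beltrami coefficient when applying Lemma~\ref{KePV}), and you prove $\chi_\Omega\in W^{\beta,2}$ directly via $\omega_2(\chi_\Omega)(y)\lesssim|y|^{1/2}$, whereas the paper appeals to bilipschitz invariance of fractional Sobolev spaces together with the half-plane model and a partition of unity.
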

\begin{proof}
Since $\Omega$ is an extension domain, there is an extension $\mu_0$ of $\mu$ belonging to $W^{\alpha,2}(\C)$.
Of course, such extension $\mu_0$ need not be supported in $\Omega$ any more. Now
$\tilde{\mu}$ can be introduced as the pointwise multiplication
$$\tilde{\mu}=\chi_\Omega\,\mu_0.$$
By virtue  Lemma \ref{KePV} it is enough to  study the smoothness
of the characteristic function $\chi_\Omega$. A way to see this is
to  recall that fraccional Sobolev spaces are invariant under
composition with bilipschitz maps \cite{Z}. Now, the
characteristic function of the half plane belongs to
$W^{\alpha,p}_{loc}(\C)$ whenever $\alpha p<1$. Therefore, by a
partition of unity argument, we get that $\chi_\Omega\in
W^{\alpha,p}(\C)$ when $\alpha p<1$. The proof is conclude.
\end{proof}

\noindent
Now we need to compare the original Dirichlet-to-Neumann maps with the Dirichlet-to-Neumann maps of the extensions.

\begin{Lemma}\label{DtNstability}
Let $\Omega$ be a domain strictly included in $\D$. Let $\gamma_1,\gamma_2\in L^\infty(\Omega)$ be  conductivities in $\Omega$. Further, assume that
$$
\frac{1}{K}\leq \gamma_i(z)\leq K
$$
for almost every $z\in\Omega$. Let $\tilde{\gamma_i}$ denote
the corresponding extensions by $1$ to all of $\C$. Then,
$$
\|\Lambda_{\tilde{\gamma_1}}-\Lambda_{\tilde{\gamma_2}}\|_{H^\frac12(\partial\D)\to
H^{-\frac12}(\partial\D)}\leq
C\,\|\Lambda_{\gamma_1}-\Lambda_{\gamma_2}\|_{H^{\frac12}(\partial\Omega)\to
H^{-\frac12}(\partial\Omega)}.
$$
\end{Lemma}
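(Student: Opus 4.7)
The plan is to exploit Alessandrini's identity in both directions, together with the fact that the difference $\tilde{\gamma}_1-\tilde{\gamma}_2=\gamma_1-\gamma_2$ is supported in $\Omega$. Fix $f,g\in H^{1/2}(\partial\mathbb{D})$, and let $\tilde{u}_1$ and $\tilde{v}_2$ be the unique $H^1(\mathbb{D})$ solutions of
$$\nabla\cdot(\tilde{\gamma}_1\nabla \tilde{u}_1)=0,\qquad \nabla\cdot(\tilde{\gamma}_2\nabla \tilde{v}_2)=0$$
in $\mathbb{D}$, with boundary values $f$ and $g$ respectively. Alessandrini's identity applied to $\mathbb{D}$ gives
$$\langle(\Lambda_{\tilde{\gamma}_1}-\Lambda_{\tilde{\gamma}_2})f,g\rangle=\int_{\mathbb{D}}(\tilde{\gamma}_1-\tilde{\gamma}_2)\,\nabla\tilde{u}_1\cdot\nabla\tilde{v}_2=\int_\Omega (\gamma_1-\gamma_2)\,\nabla u_1\cdot\nabla v_2,$$
where $u_1:=\tilde{u}_1|_\Omega$ and $v_2:=\tilde{v}_2|_\Omega$. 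Since $H^1_0(\Omega)\subset H^1_0(\mathbb{D})$ can be used as a test function in the weak formulation on $\mathbb{D}$, one checks that $u_1$ (resp.\ $v_2$) solves the $\gamma_1$-conductivity equation (resp.\ $\gamma_2$-) in $\Omega$, with boundary values $\phi:=\tilde{u}_1|_{\partial\Omega}$ and $\psi:=\tilde{v}_2|_{\partial\Omega}$ which lie in $H^{1/2}(\partial\Omega)$ by the trace theorem on the Lipschitz domain $\Omega$.

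Now apply Alessandrini's identity in the reverse direction, but this time on $\Omega$:
$$\int_\Omega(\gamma_1-\gamma_2)\,\nabla u_1\cdot\nabla v_2=\langle(\Lambda_{\gamma_1}-\Lambda_{\gamma_2})\phi,\psi\rangle_{H^{-1/2}(\partial\Omega),\,H^{1/2}(\partial\Omega)}.$$
Combining the two equalities and estimating the right-hand side by the operator norm of $\Lambda_{\gamma_1}-\Lambda_{\gamma_2}$, one obtains
$$\bigl|\langle(\Lambda_{\tilde{\gamma}_1}-\Lambda_{\tilde{\gamma}_2})f,g\rangle\bigr|\leq \|\Lambda_{\gamma_1}-\Lambda_{\gamma_2}\|_{H^{1/2}(\partial\Omega)\to H^{-1/2}(\partial\Omega)}\,\|\phi\|_{H^{1/2}(\partial\Omega)}\,\|\psi\|_{H^{1/2}(\partial\Omega)}.$$
It remains to control $\|\phi\|_{H^{1/2}(\partial\Omega)}$ and $\|\psi\|_{H^{1/2}(\partial\Omega)}$ by $\|f\|_{H^{1/2}(\partial\mathbb{D})}$ and $\|g\|_{H^{1/2}(\partial\mathbb{D})}$ respectively. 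The trace theorem on the Lipschitz domain $\Omega\subset\mathbb{D}$ gives
$$\|\phi\|_{H^{1/2}(\partial\Omega)}\leq C\,\|\tilde{u}_1\|_{H^1(\Omega)}\leq C\,\|\tilde{u}_1\|_{H^1(\mathbb{D})},$$
and the standard energy estimate for the uniformly elliptic equation on $\mathbb{D}$ with coefficient $\tilde{\gamma}_1$ (whose ellipticity constant is still $K$, since $\tilde{\gamma}_1$ equals $1$ outside $\Omega$) yields $\|\tilde{u}_1\|_{H^1(\mathbb{D})}\leq C(K)\,\|f\|_{H^{1/2}(\partial\mathbb{D})}$. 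Applying the same to $v_2$ and taking the supremum over $f$ and $g$ of unit norm in $H^{1/2}(\partial\mathbb{D})$ yields the claimed inequality.

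The main obstacle is really just bookkeeping: one must check that the interior trace $\tilde{u}_1|_{\partial\Omega}$ is well defined and bounded in $H^{1/2}(\partial\Omega)$ by $\|f\|_{H^{1/2}(\partial\mathbb{D})}$. This is exactly where the Lipschitz regularity of $\Omega$ enters (to have a continuous trace operator $H^1(\Omega)\to H^{1/2}(\partial\Omega)$) and the strict inclusion $\overline{\Omega}\subset\mathbb{D}$ is convenient to ensure the two boundaries do not meet. Beyond this, the argument is entirely formal: Alessandrini's identity transports the difference of the Dirichlet-to-Neumann maps from $\partial\mathbb{D}$ to $\partial\Omega$ free of charge, because $\tilde{\gamma}_1-\tilde{\gamma}_2$ has no mass outside $\Omega$.
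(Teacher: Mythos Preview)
Your proof is correct and takes a genuinely different route from the paper's. You apply Alessandrini's identity twice --- once on $\D$ (to turn the DtN pairing into a volume integral of $(\tilde\gamma_1-\tilde\gamma_2)\nabla\tilde u_1\cdot\nabla\tilde v_2$, which localizes to $\Omega$ for free) and once on $\Omega$ (to turn it back into a DtN pairing there). The paper instead works directly with the weak formulation: it fixes a single boundary datum $\varphi_0$, solves for $\tilde u_1,\tilde u_2$ on $\D$, builds an auxiliary comparison function $\tilde v_2$ by gluing a $\gamma_2$-harmonic correction on $\Omega$ to $\tilde u_1$ on $\D\setminus\Omega$, proves the energy estimate $\|\nabla(\tilde v_2-\tilde u_2)\|_{L^2(\D)}\le c\rho\|\varphi_0\|_{H^{1/2}(\partial\D)}$, and then uses this to control $\langle(\Lambda_{\tilde\gamma_1}-\Lambda_{\tilde\gamma_2})\varphi_0,\psi_0\rangle$ against an arbitrary test function $\psi_0$. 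Your argument is shorter because choosing the test extension on $\D$ to be the $\tilde\gamma_2$-solution (rather than an arbitrary $H^1$ extension) makes the cross terms cancel automatically; this is exactly the content of Alessandrini's identity, which the paper rederives by hand. One small remark: the Lipschitz assumption you invoke for the trace is not actually needed here, since in this paper $H^{1/2}(\partial\Omega)$ is defined as the quotient $H^1(\Omega)/H^1_0(\Omega)$, so $\|\phi\|_{H^{1/2}(\partial\Omega)}\le\|\tilde u_1\|_{H^1(\Omega)}$ holds by definition.
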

\begin{proof}
We follow the ideas of \cite[Theorem 6.2]{BFR}, although the
stability result from \cite{B1} is not needed in our situation.
Let $\varphi_0\in H^\frac12(\partial\D)$. Let $\tilde{u}_j\in
H^1(\D)$ be the solution to
$$
\begin{cases}
\nabla\cdot(\tilde{\gamma}_j\nabla\tilde{u}_j)=0&\text{ in }\D\\
\tilde{u}_j=\varphi_0&\text{ in }\partial\D.
\end{cases}
$$
Let also $u_2$ be defined by
$$
\begin{cases}
\nabla\cdot(\gamma_2\nabla u_2)=0&\text{ in }\Omega\\
u_2=\tilde{u}_1&\text{ in }\partial\Omega.
\end{cases}
$$
Define now
$\tilde{v}_2=u_2\,\chi_{\Omega}+\tilde{u}_1\,\chi_{\D\setminus\Omega}$.
As in \cite{BFR}, we first control $\tilde{u}_2-\tilde{v}_2$ in
terms of $\rho$. To do this,
$$\aligned
\int_\D|\nabla(\tilde{v}_2-\tilde{u}_2)|^2
&\leq c\,\int_\D\tilde{\gamma}_2\, \nabla(\tilde{v}_2-\tilde{u}_2)\cdot\nabla(\tilde{v}_2-\tilde{u}_2)\\
&=
c\,\int_\D\tilde{\gamma}_2\, \nabla\tilde{v}_2\cdot\nabla(\tilde{v}_2-\tilde{u}_2)\endaligned
$$
because $\tilde{v}_2-\tilde{u}_2\in H^1_0(\D)$ and the
$\tilde{\gamma}_2$-harmonicity of $\tilde{u}_2$ in $\D$. By adding
and substracting $\int_\D  \tilde{\gamma}_1  
\nabla\tilde{u}_1\cdot\nabla(\tilde{v}_2-\tilde{u}_2)$, and
using
 that $\tilde{\gamma}_1=\tilde{\gamma}_2=1$ off $\Omega$, the right
hand side above is bounded by a constant times
$$
\aligned
\left|\int_\D\tilde{\gamma}_1 \nabla\tilde{u}_1\cdot\nabla(\tilde{v}_2-\tilde{u}_2)\right|+
\left|\int_{\Omega} ( \gamma_1\,\nabla\tilde{u}_1-\gamma_2\,\nabla
u_2)\cdot\nabla(\tilde{v}_2-\tilde{u}_2)\right|.
\endaligned
$$
Here the first term vanishes because $\tilde{u}_1$ is
$\tilde{\gamma}_1$-harmonic on $\D$ and
$\tilde{v}_2-\tilde{u}_2\in H^1_0(\D)$. For the second, we observe
that $\tilde{u}_1$ is $\gamma_1$-harmonic in $\Omega$, $u_2$ is
$\gamma_2$-harmonic in $\Omega$, and $u_2-\tilde{u}_1\in
H^1_0(\Omega)$. Thus,
$$\aligned
\left|\int_{\Omega} (\gamma_1\,\nabla\tilde{u}_1-\gamma_2\,\nabla
u_2)\cdot\nabla(\tilde{v}_2-\tilde{u}_2)\right| &=
\left| \langle(\Lambda_{\gamma_1}-\Lambda_{\gamma_2})(\tilde{u}_{1|\partial\Omega}),(\tilde{v}_2-\tilde{u}_2)_{|\partial\Omega}\rangle\right|\\
&\leq\rho\,\|\tilde{u}_1\|_{H^\frac12(\partial\Omega)}\,\|\tilde{v}_2-\tilde{u}_2\|_{H^\frac12(\partial\Omega)}\\
&\leq\rho\,\|\nabla \tilde{u}_1\|_{L^2(\Omega)}\,\|\nabla(\tilde{v}_2-\tilde{u}_2)\|_{L^2(\Omega)}\\
\endaligned$$
Summarizing, we get
\begin{equation}\label{eq4}
\aligned
\left(\int_\D|\nabla(\tilde{v}_2-\tilde{u}_2)|^2\right)^\frac12
&\leq c\,\rho\,\|\nabla \tilde{u}_1\|_{L^2(\Omega)}\leq c\,\rho\,\|\nabla \tilde{u}_1\|_{L^2(\D)}\\
&\leq c\,\rho\,\|\varphi_0\|_{H^\frac12(\partial\D)}.
\endaligned
\end{equation}
We will use this to compare the Dirichlet-to-Neumann maps at
$\partial\D$. If $\psi_0\in H^\frac12(\partial\D)$ is any testing
function, and $\psi$ is any $H^1(\D)$ extension,
\begin{equation}\label{eq5}
\langle(\Lambda_{\tilde{\gamma}_1}-\Lambda_{\tilde{\gamma}_2})(\varphi_0),\psi_0\rangle=\int_\D(\tilde{\gamma}_1\,\nabla\tilde{u}_1-\tilde{\gamma}_2\,\nabla\tilde{u}_2)\cdot\nabla\psi.
\end{equation}
We will divide the bound of this quantity in two steps. For the
first,
$$
\left|\int_\D(\tilde{\gamma}_1\,\nabla\tilde{u}_1-(\gamma_2\,\chi_{\Omega}+\tilde{\gamma_1}\,\chi_{\D\setminus\Omega})\,\nabla\tilde{v}_2)\cdot\nabla\psi\right|
=\left|\langle(\Lambda_{\gamma_1}-\Lambda_{\gamma_2})(\tilde{u}_{1|\partial\Omega}),\psi_{|\partial\Omega}\rangle\right|
$$
which is bounded by
$$
\aligned
\rho\,\|\tilde{u}_{1|\partial\Omega}\|_{H^\frac12(\partial\Omega)}\,\|\psi_{|\partial\Omega}\|_{H^\frac12(\partial\Omega)}
&\leq\rho\,\|\nabla\tilde{u}_1\|_{L^2(\Omega)}\,\|\nabla\psi\|_{L^2(\Omega)}\\
&\leq\rho\,\|\nabla\tilde{u}_1\|_{L^2(\D)}\,\|\nabla\psi\|_{L^2(\D)}\\
&\leq\rho\,\|\varphi_0\|_{H^\frac12(\partial\D)}\,\|\psi_0\|_{H^\frac12(\partial\D)}.
\endaligned$$
We are left with,
$$
\left|\int_\D\left((\gamma_2\,\chi_{\Omega}+\tilde{\gamma_1}\,\chi_{\D\setminus\Omega})\nabla\tilde{v}_2-\tilde{\gamma}_2\,\nabla\tilde{u}_2\right)\cdot\nabla\psi\right|
$$
which  is smaller than,
$$
\aligned
\left|\int_\Omega\gamma_2\,\nabla(\tilde{v}_2-\tilde{u}_2)\cdot\nabla\psi +
 \int_{\D\setminus\Omega}\nabla(\tilde{v}_2-\tilde{u}_2)\cdot\nabla\psi\right|\\
\endaligned
$$
which in turn is controlled, using \eqref{eq4}, by a multiple of
$$\aligned
\int_\D|\nabla(\tilde{v}_2-\tilde{u}_2)|\,|\nabla\psi|
&\leq\|\nabla(\tilde{v}_2-\tilde{u}_2)\|_{L^2(\D)}\,\|\nabla\psi\|_{L^2(\D)}\\
&\leq
c\,\rho\,\|\varphi\|_{H^\frac12(\partial\D)}\,\|\psi_0\|_{H^\frac12(\partial\D)}.
\endaligned$$
This gives for \eqref{eq5} that the difference of
Dirichlet-to-Neumann maps satisfies
$$
|\langle(\Lambda_{\tilde{\gamma}_1}-\Lambda_{\tilde{\gamma}_2})(\varphi_0),\psi_0\rangle|\leq
c\,\rho\,\|\varphi\|_{H^\frac12(\partial\D)}\,\|\psi_0\|_{H^\frac12(\partial\D)}
$$
as desired.
\end{proof}

\begin{Rem}
The trivial extension of the conductivities by $1$ simplifies the
arguments but has the prizes or loosing regularity if $\alpha\ge
1/2$. An argument similar to that in \cite{BFR} would need an
$L^2$ version of the boundary recovery result of Brown (see also
\cite{A1}) of the type
\[ \|\gamma_1-\gamma_2\|_{L^2(\partial \Omega)} \le C\rho \]
\end{Rem}
\noindent

\section{Beltrami equations and fractional Sobolev spaces}\label{beltramifractsob}

This section is devoted to investigate how quasiconformal mappings
interplay with fractional Sobolev spaces. We face three different
goals. First, given a Beltrami coefficient $\mu\in
W^{\alpha,2}_0(\C)$, we find $\beta\in(0,\alpha)$ such that for
any $K$-quasiconformal mapping $\phi$ the composition
$\mu\circ\phi$, which is another Beltrami coefficient with the
same ellipticity bound, belongs to $W^{\beta, 2}(\C)$. Secondly,
we obtain the optimal (at least when $\alpha\approx 1$),  Sobolev
regularity for the homeomorphic solutions to the equation
$$\overline\partial f=\mu\,\partial f+\nu\,\overline{\partial f}$$
under the assumptions of ellipticity and Sobolev regularity for the coefficients. Finally, we obtain bounds for the complex geometric optics solutions. \\
Many properties of planar quasiconformal mappings rely on two precise integral operators, the Cauchy transform,
\begin{equation}\label{Cauchy} {\cal C} \varphi(z)= \frac{-1}{\pi}\, \int
\frac{\varphi(w)}{(w-z)}\,dA(w).\end{equation}
and the Beurling transform,
\begin{equation}\label{Beurling}
T\varphi(z)=\frac{-1}{\pi}\,\lim_{\varepsilon\to 0}
\int_{|w-z|\geq\varepsilon}\frac{\varphi(w)}{(w-z)^2}\,dA(w).
\end{equation}
Their basic properties are well known and can be found in any reference concerning planar quasiconformal mappings, \cite{Al,AIM,AP}.

\subsection{Composition with quasiconformal mappings}

Let $\mu$ be a compactly supported Beltrami coefficient, satisfying
$$
|\mu|\leq \frac{K-1}{K+1}=\kappa\,\chi_{\D}.
$$
Further, assume that
$$\mu\in W^{\alpha, 2}(\C)\text{ and }\|\mu\|_{W^{\alpha,2}(\C)}\leq \Gamma_0$$
for some $\alpha>0$ and some $\Gamma_0>0$. Let $\phi:\C\to\C$ be a planar $K$-quasiconformal mapping. In this section,
we look for those $\beta>0$ such that $\mu\circ\phi\in W^{\beta, 2}(\C)$.\\

%
%
\noindent
We need to recall a local version of a lemma due to Fefferman and Stein, see \cite{Pe} and \cite[Proposition
2.24]{Duo}. The proof follows from Vitali covering Lemma, exactly as in \cite{Pe}. By $Mf$ we denote the Hardy-Littlewood maximal function,
$$
Mf(x)=\sup\frac{1}{|D|}\int_D f.
$$
where the supremum runs over all disks $D$ with $x\in D$, while $M_\Omega f$ denote its local version, that is,
$$
M_\Omega f(x)=\sup \frac{1}{|D|}\int_D f
$$ 
where the supremum is taken over all discs $D$ with $x\in D\subset\Omega$. 

\begin{Lemma}\label{selfadjmaximal}
Let $w\geq 0$ a locally integrable function. Then
\[
\int_{\Omega} |M_\Omega f|^p \omega dx \le \int_{\Omega} |f|^p M\omega .\]
\end{Lemma}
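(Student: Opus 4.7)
The plan is to follow the classical Fefferman--Stein route through a weak-type endpoint estimate plus Marcinkiewicz interpolation, with the only twist being that the covering argument must stay inside $\Omega$ so that the local operator $M_\Omega$ is compatible with the local disks produced by Vitali.

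First I would establish the weak-type inequality
$$
w\bigl(\{x\in\Omega\,:\,M_\Omega f(x)>\lambda\}\bigr)\leq \frac{C}{\lambda}\int_\Omega |f|\,Mw.
$$
For each $x$ in the level set there is, by definition of $M_\Omega$, a disk $D_x\subset\Omega$ containing $x$ with $\frac{1}{|D_x|}\int_{D_x}|f|>\lambda$. The Vitali covering lemma applied to $\{D_x\}$ produces a disjoint subfamily $\{D_j\}$ (still contained in $\Omega$) such that $\{M_\Omega f>\lambda\}\subset\bigcup_j 5D_j$. For any $y\in D_j$ we have $Mw(y)\geq \frac{w(5D_j)}{|5D_j|}$ (this uses the \emph{global} $M$, which is why the right-hand side of the lemma carries $Mw$ and not $M_\Omega w$), so taking infimum and multiplying by $|D_j|$ yields $w(5D_j)\leq 25\,|D_j|\,\inf_{D_j}Mw$. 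Combining this with $|D_j|<\lambda^{-1}\int_{D_j}|f|$ and bringing $\inf_{D_j}Mw$ inside the integral gives $w(5D_j)\leq \frac{25}{\lambda}\int_{D_j}|f|\,Mw$, and summing over the disjoint $D_j$ produces the weak-type bound.

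Next, the strong $(\infty,\infty)$ bound $\|M_\Omega f\|_\infty\leq \|f\|_\infty$ is trivial (with the measure $w\,dx$ on the left and $Mw\,dx$ on the right, using that $w\leq Mw$ almost everywhere by the Lebesgue differentiation theorem, which one can invoke since $w$ is locally integrable). Marcinkiewicz interpolation between this endpoint and the weak-type $(1,1)$ bound just proved then yields, for every $1<p<\infty$,
$$
\int_\Omega |M_\Omega f|^p\,w\,dx\;\leq\; C_p\int_\Omega |f|^p\,Mw\,dx,
$$
which is the stated inequality (the displayed constant in the lemma being absorbed in the generic $C$ used throughout the paper).

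The step I expect to require the most care is the covering/localization: one needs the selected Vitali disks $D_j$ to sit inside $\Omega$ so that the quantity $\inf_{D_j}Mw$ makes sense as a lower bound on $Mw$ inside the domain, while the dilates $5D_j$ are only used as a covering device and need not be contained in $\Omega$ (they just need to cover the level set). The reason the right-hand side uses the non-localized $Mw$ is precisely that the comparison $Mw(y)\geq w(5D_j)/|5D_j|$ for $y\in D_j$ requires averaging over $5D_j$, which may exit $\Omega$. Once this is arranged, the remainder is the standard Fefferman--Stein scheme.
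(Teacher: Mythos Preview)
Your proposal is correct and follows exactly the route the paper points to: the paper does not spell out the argument but simply says ``The proof follows from Vitali covering Lemma, exactly as in \cite{Pe}'', and the Fefferman--Stein scheme you describe (Vitali-based weak-type $(1,1)$ plus the trivial $L^\infty$ endpoint, then interpolate) is precisely that argument. Your remark about why the right-hand side must carry the global $Mw$ rather than $M_\Omega w$ is the key localization point, and your handling of it is correct; the resulting constant $C_p$ is, as you note, absorbed in the generic constants used downstream.
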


\noindent
We can now prove the main result of this section.

\begin{Prop}\label{composition}
Let $K\geq 1$. Let $\mu\in W^{\alpha, 2}(\C)$ for some $\alpha\in(0,1)$, and assume that $|\mu|\leq\frac{K-1}{K+1}\,\chi_{|\D}$. Let $\phi:\C\to\C$ be any $K$-quasiconformal mapping, conformal out of a compact set, and normalized so that $|\phi(z)-z|\to 0$ as $|z|\to\infty$. Then
$$\mu\circ\phi\in W^{\beta,2}(\C)$$
whenever $\beta<\frac{\alpha}{K}$. Moreover,
$$
\|\mu\circ\phi\|_{W^{\beta,2}(\C)}\leq C\,\|\mu\|_{W^{\alpha,2}(\C)}^\frac{1}{K},
$$
for some constant $C>0$ depending only on $\alpha, \beta$ and $K$.
\end{Prop}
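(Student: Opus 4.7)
My plan is to combine the Besov-space characterization $W^{\beta,2}(\C)=B^{2,2}_\beta(\C)$, Swanson's pointwise inequality for $W^{\alpha,2}$ functions, Mori's Hölder estimate for $K$-quasiconformal maps, and the area-distortion/Muckenhoupt properties of the Jacobian $J_{\phi^{-1}}$ (compatible with Lemma \ref{selfadjmaximal}). I would bound the two pieces of the $B^{2,2}_\beta$ norm separately. The $L^2$ part is immediate: $\mu\in L^\infty$ is supported in $\D$, and by Astala's area distortion the set $\phi^{-1}(\overline\D)$ has measure bounded by a constant $C(K)$, so $\|\mu\circ\phi\|_{L^2}\leq C(K)$.

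For the Besov seminorm integral $\int_\C \omega_2(\mu\circ\phi)(h)^2\,|h|^{-(2+2\beta)}\,dh$, Swanson's inequality supplies, for each $\lambda\in(0,\alpha)$, a function $g=g_\lambda\in L^{p_\lambda}(\C)$ with $p_\lambda=2/(1-(\alpha-\lambda))$ and $\|g\|_{L^{p_\lambda}}\leq C_\lambda\,\|\mu\|_{W^{\alpha,2}}$, satisfying $|\mu(a)-\mu(b)|\leq|a-b|^\lambda(g(a)+g(b))$. Combining this with Mori's theorem $|\phi(x+h)-\phi(x)|\leq C(K)|h|^{1/K}$ (for bounded $|h|$) and using translation invariance yields
$$\omega_2(\mu\circ\phi)(h)^2\;\leq\;C\,|h|^{2\lambda/K}\int_\C g(\phi(x))^2\,dx\;=\;C\,|h|^{2\lambda/K}\int_\C g(w)^2\,J_{\phi^{-1}}(w)\,dw,$$
after the change of variables $w=\phi(x)$.

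The key technical estimate is the weighted integral $\int_\C g^2\,J_{\phi^{-1}}$. I would approach it through the layer-cake formula
$$\int_\C g^2\,J_{\phi^{-1}}\;=\;\int_0^\infty |\phi^{-1}(\{g^2>t\})|\,dt$$
combined with Astala's area distortion $|\phi^{-1}(E)|\leq C(K)|E|^{1/K}$ applied to the superlevel sets of $g$, together with Chebyshev against $\|g\|_{L^{p_\lambda}}$. This produces a bound of the form $\int g^2\,J_{\phi^{-1}}\leq C(K)\,\|\mu\|_{W^{\alpha,2}}^{2/K}$, where the $1/K$ power comes directly from the distortion exponent. An alternative route is to apply the Fefferman–Stein inequality of Lemma \ref{selfadjmaximal} with weight $\omega=J_{\phi^{-1}}$, exploiting the $A_\infty$ property of the Jacobian established in \cite{AIS}.

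Assembling the estimates and integrating in $h$ (the tail $|h|\geq 1$ absorbed by the $L^2$ bound), the small-$h$ integral converges whenever $\beta<\lambda/K$; letting $\lambda\nearrow\alpha$ yields the result for every $\beta<\alpha/K$, with the claimed norm bound $\|\mu\circ\phi\|_{W^{\beta,2}}\leq C\,\|\mu\|_{W^{\alpha,2}}^{1/K}$. The main obstacle is exactly the weighted integral $\int g^2\,J_{\phi^{-1}}$: Swanson's $g$ is globally defined and need not be compactly supported, while $J_{\phi^{-1}}$ tends to $1$ at infinity and is not globally integrable, so the area-distortion argument must be applied only to superlevel sets that are effectively bounded. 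Balancing the integrability of $g$ (dictated by $p_\lambda$, hence by how close $\lambda$ is to $\alpha$) against the Muckenhoupt/reverse-Hölder integrability of $J_{\phi^{-1}}$ (dictated by $K$) is the delicate step; the exponent $1/K$ in the norm bound mirrors Astala's area distortion, and the restriction $\beta<\alpha/K$ reflects the sharp Hölder loss characteristic of $K$-quasiconformal composition.
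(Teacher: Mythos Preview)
Your outline has two genuine gaps, one of which makes the argument fail outright for small $\alpha$.

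\textbf{First gap: missing interpolation with $L^\infty$.} You apply Swanson's inequality directly to $\mu\in W^{\alpha,2}$, so $g_\lambda\in L^{p_\lambda}$ with $p_\lambda=\frac{2}{1-(\alpha-\lambda)}$, which is only slightly larger than~$2$ when $\lambda$ is close to~$\alpha$. But your layer-cake/area-distortion estimate for $\int_B g^2\,J_{\phi^{-1}}$ requires the tail integral $\int_T^\infty |\{g^2>t\}|^{1/K}\,dt$ to converge, and by Chebyshev this happens only if $p_\lambda>2K$; the same threshold arises if you use H\"older against $\|J_{\phi^{-1}}\|_{L^q}$ with $q<\frac{K}{K-1}$. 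The condition $p_\lambda>2K$ forces $\alpha-\lambda>1-\frac{1}{K}$, which is impossible for any $\lambda>0$ once $\alpha\leq\frac{K-1}{K}$. So as written your argument breaks down unless $\alpha$ is close to~$1$. The paper fixes this by first interpolating $\mu\in W^{\alpha,2}\cap L^\infty$ to obtain $\mu\in W^{\alpha\theta,\,2/\theta}$ with $\|\mu\|_{W^{\alpha\theta,2/\theta}}\lesssim\|\mu\|_{W^{\alpha,2}}^{\theta}$; Swanson applied at this stage yields $g\in L^{p_\lambda}$ with $p_\lambda\approx 2/\theta$, which can be pushed above $2K$ by taking $\theta$ small. (This interpolation step, optimized at $\theta\approx 1/K$, is exactly what produces the exponent $1/K$ in the norm bound; it does not come from area distortion as you suggest.)

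\textbf{Second gap: Mori's estimate is too crude.} Even after inserting the interpolation step, replacing $|\phi(z+h)-\phi(z)|$ by Mori's uniform bound $C|h|^{1/K}$ loses a full factor of $K$: you would obtain convergence of the Besov integral only for $\beta<\lambda/K$, and since the integrability constraints force $\lambda<\alpha/K$, you end up with $\beta<\alpha/K^2$ rather than $\beta<\alpha/K$. The paper avoids this by not using Mori at all; instead it bounds
\[
\Bigl(\frac{|\phi(z+w)-\phi(z)|}{|w|}\Bigr)^{\lambda}\leq C_K\,\bigl(M_\Omega J(\cdot,\phi)^\lambda(z)\bigr)^{1/2},
\]
a pointwise estimate via the local maximal function of a power of the Jacobian, which captures the actual local dilatation rather than the worst-case H\"older exponent. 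The resulting integral $\int_F M_\Omega J_\lambda\,(g\circ\phi)^2$ is then handled with Lemma~\ref{selfadjmaximal} and the Muckenhoupt $A_r$ property of $J(\cdot,\phi)^{\lambda s}$; after balancing parameters this yields the full range $\beta<\alpha/K$.
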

\begin{proof}
It is clear that $\mu\circ\phi$ belongs to $L^2(\C)$, so since $W^{\alpha,2}$ agrees with the Besov space $B^{2,2}_\alpha$, it suffices to show the convergence of the integral
$$\int_\C\int_\C\frac{|\mu(\phi(z+w))-\mu(\phi(z))|^2}{|w|^{2+2\beta}}\,dA(z)\,dA(w)$$
for every $\beta<\frac{\alpha}{K}$. First of all, note that by Koebe's $\frac14$ Theorem we have the inclusions
$\phi(\D)\subset 4\D$ and $\phi^{-1}(4\D)\subset 16\D$. Thus, for large $w$ there is nothing to say since
$$\aligned
\int_{|w|>1}\int_\C&\frac{|\mu(\phi(z+w))-\mu(\phi(z))|^2}{|w|^{2+2\beta}}\,dA(z)\,dA(w)\\
&\leq
2\,\|\mu\|_{L^2(\C)}^2\,\int_{|w|>1}\frac{1}{|w|^{2+2\beta}}dA(w)=\frac{C\,\|\mu\|_{L^2(\C)}^2}{\beta}
\endaligned$$
for some universal constant $C>0$. Then we are left to bound the integral
$$
\int_{|w|\leq 1}\int_\C\frac{|\mu(\phi(z+w))-\mu(\phi(z))|^2}{|w|^{2+2\beta}}\,dA(z)\,dA(w).
$$
Notice that this integral is in fact the same as
\begin{equation}\label{eq22}
\int_{|w|\leq 1}\int_F\frac{|\mu(\phi(z+w))-\mu(\phi(z))|^2}{|w|^{2+2\beta}}\,dA(z)\,dA(w)
\end{equation}
where $F=\{z\in\C:d(z,\phi^{-1}(\D))\leq 1\}\subset 17\D$ by Koebe's Theorem again. \\
\\
Since $\mu\in W^{\alpha,2}(\C)$, then by interpolation we get that $\mu\in W^{\alpha\theta,\frac{2}{\theta}}(\C)$ for each $\theta\in(0,1)$ and $$\|\mu\|_{W^{\alpha\theta,\frac{2}{\theta}}(\C)}\leq C\,\|\mu\|_\infty^{1-\theta}\,\|\mu\|_{W^{\alpha,2}(\C)}^{\theta}.$$
The goal is to choose $\theta$ to obtain larger possible $\beta$. For this, we use condition \eqref{pointwise}. Indeed, for each $\lambda\in(0,\alpha\theta)$ there exists a function $g=g_\lambda\in L^{p_\lambda}(\C)$, $p_\lambda=\frac{2}{\theta-(\alpha\theta-\lambda)}$, such that
$$|\mu(\zeta)-\mu(\xi)|\leq|\zeta-\xi|^{\lambda}\,\left(g(\zeta)+g(\xi)\right)$$
at almost every $\zeta,\xi\in\C$. Furthermore,
$$
\aligned
\|g_\lambda\|_{L^{p_\lambda}(\C)}
&\leq C_\lambda\,\|\mu\|_{W^{\alpha\theta,\frac{2}{\theta}}(\C)}\\
&\leq C_\lambda\,\|\mu\|_\infty^{1-\theta}\,\|\mu\|_{W^{\alpha,2}(\C)}^{\theta}.
\endaligned$$
with $C_\lambda$ bounded as $\lambda\to\alpha\theta$. Hence, if $|w|\leq 1$ then
$$\aligned
\frac{|\mu(\phi(z+w))-\mu(\phi(z))|}{|w|^\lambda} &\leq
\left(\frac{|\phi(z+w)-\phi(z)|}{|w|}\right)^{\lambda}\,\left(g(\phi(z+w))+g(\phi(z))\right).
\endaligned$$
Now use quasiconformality and the reverse H\"older inequality for the jacobian (see \cite{AIS} for a more precise formulation) to get that
$$\aligned
\left(\frac{|\phi(z+w)-\phi(z)|}{|w|}\right)^\lambda&\leq
C_K\,\left(\frac{\diam \phi(D(z,|w|))}{\diam D(z,|w|)}\right)^\lambda\\
&\leq
C_K\,\left(\frac{1}{|D(z,|w|)|}\,\int_{D(z,|w|)}J(\zeta,\phi)\,dA(\zeta)\right)^\frac{\lambda}{2}\\
&\leq C_K\,\left(M_\Omega J_\lambda(z)\right)^\frac{1}{2}
\endaligned$$
where $\Omega=\left\{z\in\C: d(z,\phi^{-1}(\D))\leq 2\right\}$ and $M_\Omega J_\lambda(z)$ denotes the local Hardy-Littlewood maximal function $M_\Omega$ at the point $z$ of $J(\cdot,\phi)^\lambda$. Note also that $\Omega\subset 18\D$ by Koebe's Theorem. By symmetry, we could also write $M_\Omega J_\lambda(z+w)$ instead of $M_\Omega J_\lambda(z)$, so we end up getting
$$
\frac{|\mu(\phi(z+w))-\mu(\phi(z))|^2}{|w|^{2\lambda}}
\leq C\,
\left(M_\Omega J_\lambda(z+w)\,g(\phi(z+w))^2+M_\Omega J_\lambda(z)\,g(\phi(z))^2\right).
$$
Therefore the integral at \eqref{eq22} is bounded by a constant times
$$
\int_{|w|\leq
1}\int_F\frac{M_\Omega J_\lambda(z+w)\,g(\phi(z+w))^2+M_\Omega J_\lambda(z)\,g(\phi(z))^2}{|w|^{2+2\beta-2\lambda}}\,dA(z)\,dA(w).
$$
If we restrict ourselves to values of $\lambda$ within the interval $(\beta,\alpha\theta)$, then the integral above is bounded by
\begin{equation}\label{eq26}
\frac{C}{\lambda-\beta}\,\int_F M_\Omega J_\lambda(z)\,g(\phi(z))^2\,dA(z).
\end{equation}
To get bounds for this, we start by choosing parameters. Fix $\alpha, \beta$ and $K$ with $\beta<\frac{\alpha}{K}$. Then we can find $s>1$ such that $\beta s<\frac{\alpha}{K}$. Now let us consider real numbers $\theta\in(0,1)$ and $\lambda\in(\beta,\alpha\theta)$ satisfying
\begin{equation}\label{eq23}
\lambda+(1-\alpha)\theta<\frac{1}{Ks}.
\end{equation}
Such conditions are compatible precisely when $\beta s<\frac{\alpha}{K}$. Condition \eqref{eq23} also guarantees that
$p_\lambda>2Ks$, so we can find $r$ satisfying
\begin{equation}\label{eq24}
1+\lambda s(K-1)<r<\frac{p_\lambda}{2Ks}(1+\lambda s(K-1)),
\end{equation}
and obviously $r>1$.\\
Once the parameters have been chosen, we proceed as follows. First, by H\"older's inequality
$$\aligned
\int_F M_\Omega J_\lambda(z)\,g\circ\phi(z)^2\,dA(z)
&=\int_\Omega M_\Omega J_\lambda(z)\,g\circ\phi(z)^2\,\chi_F(z)\,dA(z)\\
&\leq \left(\int_\Omega M_\Omega J_\lambda(z)^s\,g\circ\phi(z)^{2s}\,\chi_F(z)\,dA(z)\right)^\frac1s\,|F|^{1-\frac1s}.
\endaligned$$
Now we use Lemma \ref{selfadjmaximal} to bound the last integral above by a constant times
$$
\int_\Omega J(z,\phi)^{\lambda s}\,M(g\circ\phi^{2s}\,\chi_F)(z)\,dA(z).
$$
For any $r>1$, we can bound the above integral by
$$
\left(\int_\C J(z,\phi)^{\lambda s}\,M(g\circ\phi^{2s}\,\chi_F)(z)^r\,dA(z)\right)^\frac1r\,\left(\int_\Omega J(z,\phi)^{\lambda s}dA(z)\right)^{1-\frac1r}.
$$
The first inequality at \eqref{eq24} guarantees that the weight $J(\cdot,\phi)^{\lambda s}$ belongs to the Muckenhoupt class $A_r$ (see \cite{AIS} for details). Therefore we can use the weighted $L^r$ inequality for the maximal function and a change of coordinates to see that
$$\aligned
\int_\C J(z,\phi)^{\lambda s}\,M(g\circ\phi^{2s}\,\chi_F)(z)^r\,&dA(z)
\leq C_{r}\int_F J(z,\phi)^{\lambda s}\,g\circ\phi(z)^{2sr}\,dA(z)\\
&=C_r\,\int_{\phi(F)}J(w,\phi^{-1})^{1-\lambda s}\,g(w)^{2sr}\,dA(w),
\endaligned
$$
where $C_r$ is a positive constant depending on $r$. Summarizing, we get for the integral at \eqref{eq26} the bound
$$
C\,|F|^{1-\frac{1}{s}}\,
\left(\int_\Omega J(z,\phi)^{\lambda s}dA(z)\right)^{\frac1s-\frac{1}{sr}}\,\left(\int_{\phi(F)} J(w,\phi^{-1})^{1-\lambda s}\,g(w)^{2sr}\,dA(w)\right)^{\frac{1}{rs}}\\
$$
For the second integral above, we only use H\"older's inequality again, and obtain the bound
$$
\left(\int_{\phi(F)}g(w)^{p_\lambda}dA(w)\right)^\frac{2rs}{p_\lambda}\,\left(\int_{\phi(F)} J(w,\phi^{-1})^{\frac{p_\lambda(1-\lambda s)}{p_\lambda-2rs}}dA(w)\right)^{\frac{p_\lambda-2rs}{p_\lambda}}
$$
which is finite provided that both $p_\lambda>2rs$ and $\frac{p_\lambda(1-\lambda s)}{p_\lambda-2rs}<\frac{K}{K-1}$ hold. But both facts are guaranteed by our choice of parameters, in particular to the second inequality at \eqref{eq24}. This means that the integral at \eqref{eq26} has the upper bound
$$
\frac{C\,|F|^{1-\frac{1}{s}}}{\lambda-\beta}\,
\left(\int_\Omega J(z,\phi)^{\lambda s}dA(z)\right)^{\frac1s-\frac{1}{sr}}\,\|g\|_{L^{p_\lambda}(\phi(F))}^2\,\left(\int_{\phi(F)} J(w,\phi^{-1})^{\frac{p_\lambda(1-\lambda s)}{p_\lambda-2rs}}dA(w)\right)^{\frac{p_\lambda-2rs}{p_\lambda}}
$$
Since both $\phi$ and $\phi^{-1}$ are normalized $K$-quasiconformal mappings, the two integrals above are bounded by constants depending only on $K$. One obtains for the integral at \eqref{eq22} the bound
$$
\frac{C}{(\lambda-\beta)^\frac12}\|g\|_{L^{p_\lambda}(\phi(F))}\leq \frac{C}{(\lambda-\beta)^\frac12}\,\|\mu\|^{1-\theta}_{L^\infty(\C)}\,\|\mu\|^\theta_{W^{\alpha,2}(\C)}
$$
where the constant $C$ depends on $r, s, \lambda, \theta, \alpha$ and $K$. \\
To find larger possible $\beta$, we have to find the supremum of those $\lambda$ for which the pair $(\theta,\lambda)$ belongs to the set
$$
A=\left\{(\theta,\lambda); 0<\theta<1,\,\,
\beta<\lambda<\theta\alpha,\,\,\lambda+(1-\alpha)\theta<\frac{1}{Ks}\right\}.
$$
according to \eqref{eq23}. This supremum is easily seen to be $\frac{\alpha}{Ks}$. Further, all the above argument works for every $s\in(1,\frac{\alpha}{\beta K})$, so that the bound for \eqref{eq22} reads now as
$$
\frac{C_K}{\frac{\alpha}{K}-\beta}\,\|\mu\|^\frac1K_{W^{\alpha,2}(\C)}.
$$
Summarizing,
$$\aligned
\int_\C\int_\C&\frac{|\mu(\phi(z+w))-\mu(\phi(z))|^2}{|w|^{2+2\beta}}dA(z)dA(w)\\
&\leq
C_K\left(\frac{\|\mu\|_{L^2(\C)}^2}{\beta}+\frac{1}{\frac{\alpha}{K}-\beta}\,\|\mu\|_{W^{\alpha,2}(\C)}^{\frac{2}{K}}\right)
\endaligned$$
for all $\beta\in(0,\frac{\alpha}{K})$. Equivalently, we have an
inequality for the nonhomogeneous norms
$$
\|\mu\circ\phi\|_{W^{\beta,2}(\C)}\leq
C(\alpha,\beta,K)\,\|\mu\|_{W^{\alpha,2}(\C)}^\frac{1}{K},
$$
as stated.
\end{proof}

\noindent

\begin{Rem}
The condition $\beta<\frac{\alpha}{K}$ is by no means sharp. This
is clear  when $\alpha$ is close to $1$ but also it can be seen
from the fact that we are using the H\"older regularity of $\phi$.
It seems that one can get a factor which runs between  $1/K$ and
$1$ in terms of $\alpha$. As promised in the introduction this
will be a matter of a forthcoming work.
\end{Rem}

\subsection{Regularity of homeomorphic solutions}

We start by recalling the basic result on the existence of
homeomorphic solutions to Beltrami type equations. In absence of
extra regularity the integrability of the solutions comes from the
work of Astala \cite{Ast}. We recall the proof in terms of Neumann series
since it will be used both in this section and in the sequel.

\begin{Lemma}\label{homeoexistence}
Let $\mu,\nu$ be bounded functions, compactly supported in $\D$, such that $||\mu(z)|+|\nu(z)||\leq\frac{K-1}{K+1}$ at almost every $z\in\C$. The equation
\begin{equation}
\overline\partial f=\mu\,\partial f+\nu\,\overline{\partial f}
\end{equation}
admits only one homeomorphic solution $\phi:\C\to\C$, such that $|\phi(z)-z|={\cal O}(1/|z|)$ as $|z|\to\infty$. Further, if $p\in(2,\frac{2K}{K-1})$ then the quantity $$\|\partial\phi-1\|_{L^p(\C)}+\|\overline\partial\phi\|_{L^p(\C)}$$
is bounded by a constant $C=C(K,p)$ that depends only on $K$ and $p$.
\end{Lemma}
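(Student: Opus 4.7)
The plan is to produce $\phi$ by the standard Cauchy transform ansatz and then read off all the required properties from the resulting $L^p$ identity. Write
$$\phi(z)=z+\mathcal{C}\omega(z),$$
where $\omega$ is to be determined in $L^p(\C)$ for some $p>2$. Since $\bar\partial\mathcal{C}=I$ and $\partial\mathcal{C}=T$ on $L^p$, substituting into $\bar\partial\phi=\mu\,\partial\phi+\nu\,\overline{\partial\phi}$ leads to the $\R$-linear fixed-point equation
$$(I-B)\omega=\mu+\bar\nu,\qquad B\omega:=\mu\,T\omega+\nu\,\overline{T\omega}.$$
The key analytic input is Astala's sharp $L^p$ theory of the Beurling transform: for every $p\in\bigl(\frac{2K}{K+1},\frac{2K}{K-1}\bigr)$ one has $\|T\|_{L^p\to L^p}<\frac{K+1}{K-1}=1/k$, hence
$$\|B\omega\|_p\leq (|\mu|+|\nu|)\,\|T\|_p\,\|\omega\|_p\leq k\|T\|_p\,\|\omega\|_p$$
with $k\|T\|_p<1$. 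Thus $I-B$ is invertible on $L^p$ by Neumann series, which gives a unique $\omega\in L^p(\C)$ satisfying the equation together with the quantitative bound
$$\|\omega\|_p\leq \frac{\|\mu+\bar\nu\|_p}{1-k\|T\|_p}\leq C(K,p),$$
because $\mu,\nu$ are supported in $\D$ and bounded by $k$.

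Next I would collect the elementary consequences. Since the right-hand side $\mu+\bar\nu$ is supported in $\D$ and the operator $B$ preserves the support of its input (as it multiplies by $\mu$ or $\nu$), iterating the Neumann series shows $\omega=\sum_n B^n(\mu+\bar\nu)$ is supported in $\D$. In particular $\omega\in L^1(\C)\cap L^p(\C)$. Then $\partial\phi-1=T\omega$ and $\bar\partial\phi=\omega$ both lie in $L^p(\C)$ with norms controlled by $C(K,p)$, which is the stated bound. The decay at infinity follows from expanding the Cauchy kernel: for $|z|$ large,
$$\mathcal{C}\omega(z)=\frac{1}{\pi z}\int_\C\omega(w)\,dA(w)+\mathcal{O}(1/|z|^2)=\mathcal{O}(1/|z|),$$
since $\omega\in L^1$ has compact support. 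Uniqueness of $\omega$ within $L^p$ transfers into uniqueness of $\phi$ with the given normalization: any homeomorphic solution $\tilde\phi$ with $\tilde\phi(z)-z=\mathcal{O}(1/z)$ has $\bar\partial\tilde\phi\in L^p(\C)$ (by standard quasiconformal regularity for the Beltrami coefficient $\tilde\mu+\tilde\nu\,\overline{\partial\tilde\phi}/\partial\tilde\phi$, which has $L^\infty$ norm at most $k$), and setting $\tilde\omega=\bar\partial\tilde\phi$ produces an $L^p$ solution of $(I-B)\tilde\omega=\mu+\bar\nu$; by contractivity $\tilde\omega=\omega$ and hence $\tilde\phi=\phi$.

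The remaining and main obstacle is to show that $\phi$ is a \emph{homeomorphism}, not merely a $W^{1,p}_{\mathrm{loc}}$ solution. The classical route is by approximation: mollify $\mu$ and $\nu$ to obtain smooth compactly supported $\mu_n,\nu_n$ with $|\mu_n|+|\nu_n|\leq k$, apply the classical existence theorem to produce smooth $K$-quasiconformal homeomorphisms $\phi_n(z)=z+\mathcal{C}\omega_n(z)$, and observe that the Neumann-series bound gives uniform control $\|\omega_n\|_p\leq C(K,p)$. The family $\{\phi_n\}$ is a normal family of $K$-quasiconformal homeomorphisms with the same normalization at infinity, so it converges locally uniformly along a subsequence to a $K$-quasiconformal limit; stability of solutions under $L^p$-weak convergence of $\omega_n$ identifies the limit with our $\phi$. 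This confirms $\phi$ is a homeomorphism. The delicate point here, which I would treat carefully, is checking that the limit is nondegenerate (not constant), which is guaranteed by the normalization $\phi(z)-z\to 0$ together with the uniform $L^p$ bound on the dilatation, via Astala's area distortion theorem.
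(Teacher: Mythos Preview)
Your ansatz $\phi=z+\mathcal{C}\omega$ and the reduction to a fixed-point equation for $\omega$ match the paper's argument exactly. There are, however, two issues worth flagging.

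First, a small slip: substituting $\phi=z+\mathcal{C}\omega$ into $\overline\partial\phi=\mu\,\partial\phi+\nu\,\overline{\partial\phi}$ gives $\omega=\mu(1+T\omega)+\nu\,\overline{(1+T\omega)}$, so the inhomogeneous term is $\mu+\nu$, not $\mu+\bar\nu$. This does not affect the structure of the argument.

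Second, and this is a genuine gap: your claim that $\|T\|_{L^p\to L^p}<\frac{K+1}{K-1}$ for every $p\in\bigl(\frac{2K}{K+1},\frac{2K}{K-1}\bigr)$ is precisely Iwaniec's conjecture on the sharp $L^p$ norm of the Beurling transform, which remains open. Consequently you cannot run the Neumann series directly on the full range. What the paper does instead is a two-step argument: since $\|T\|_2=1$, Riesz--Thorin gives $\kappa\|T\|_p<1$ for $p$ \emph{sufficiently close} to $2$, so the Neumann series converges there; this already yields $\omega\in L^p$ for some $p>2$, hence H\"older continuity of $\mathcal{C}\omega$, the decay $|\phi(z)-z|\leq C/|z|$, and the homeomorphism property. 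To then obtain the quantitative bound on the \emph{full} interval $\bigl(2,\frac{2K}{K-1}\bigr)$ the paper invokes the theorem of Astala--Iwaniec--Saksman that the $\R$-linear Beltrami operator $I-\mu T-\nu\overline{T}$ is boundedly invertible on $L^p(\C)$ throughout this range, with inverse norm depending only on $K$ and $p$. This is a genuinely deeper result than a contraction estimate and does not proceed via a bound on $\|T\|_p$.

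Your uniqueness argument via $L^p$-uniqueness of $\omega$ can be salvaged (take $p$ near $2$ where contractivity does hold), but the paper's route is shorter: if $\phi_1,\phi_2$ are two normalized homeomorphic solutions, then $\overline\partial(\phi_1\circ\phi_2^{-1})=0$, so $\phi_1\circ\phi_2^{-1}(z)-z$ is entire and vanishes at infinity, hence is identically zero. Your approximation argument for the homeomorphism property is a legitimate alternative to the paper's terse ``usual topological argument''.
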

\begin{proof}

 Put $\phi(z)=z+{\cal C}h(z)$, where $h$ is defined by
$$(I-\mu\,T-\nu\,\overline{T})h=\mu+\nu.$$
and ${\cal C}$ and $T$ denote, respectively, Cauchy and Beurling transforms.
Since $T$ is an isometry in $L^2(\C)$, one can
construct such a function $h$ as Neumann series
$$h=\sum_{n=0}^\infty(\mu T+\nu\overline{T})^n(\mu+\nu)$$
which obviously defines an $L^2(\C)$ function. By Riesz-Thorin
interpolation theorem,
$$\lim_{p\to 2}\|T\|_p=1,$$
it then follows that $h\in L^p(\C)$ for every $p>2$ such that $\|T\|_p<\frac{K+1}{K-1}$.
 Hence, the Cauchy transform ${\cal C}h$ is H\"older continuous (with exponent $1-\frac{2}{p}$). Further,
 since $h$ is compactly supported, we get $|\phi(z)-z|=|{\cal C}h(z)|\leq\frac{C}{|z|}$, and in fact
 $\phi-z$ belongs to $W^{1,p}(\C)$ for such values of $p$. A usual topological argument
  proves that $\phi$ is a homeomorphism.
  For the uniqueness, note that if we are given two solutions
  $\phi_1,\phi_2$ as in the statement then $\overline\partial(\phi_1\circ\phi_2^{-1})=0$ so
  that $\phi_1\circ\phi_2^{-1}(z)-z$ is holomorphic on $\C$ and vanishes at infinity.\\
In order to recover the  precise range
$(\frac{2K}{K+1},\frac{2K}{K-1})$ obtained by Astala \cite{Ast}
we need a remarkable result from \cite{AIS} which says that
$I-\mu\,T-\nu\,\overline{T}:L^p(\C)\to L^p(\C)$ defines a bounded
invertible operator for these values of $p$. Further, both its
$L^p$ norm and that of its inverse depend only on $K$ and $p$.
This implies that for every $p\in(\frac{2K}{K+1},\frac{2K}{K-1})$
there is a constant $C=C(K,p)$ such that
$$\|h\|_{L^p(\C)}\leq C_{K,p}.$$
The claim  follows since $\partial\phi-1=Th$ and
$\overline\partial\phi=h$.
\end{proof}
\noindent
Once we know about the existence of homeomorphic solutions, it is
time to check their regularity when the coefficients belong to
some fractional Sobolev space.

\begin{Theorem}\label{homeoregularity}
Let $\alpha\in (0,1)$, and suppose that $\mu,\nu\in W^{\alpha, 2}(\C)$ are Beltrami coefficients, compactly supported in $\D$, such that
$$\left||\mu(z)|+|\nu(z)|\right|\leq\,\frac{K-1}{K+1}.$$
at almost every $z\in\D$. Let $\phi:\C\to\C$ be the only homeomorphism satisfying
$$\overline\partial \phi=\mu\,\partial \phi+\nu\,\overline{\partial\phi}$$
and $\phi(z)-z={\cal O}(1/z)$ as $|z|\to\infty$. Then, $\phi(z)-z$ belongs to $W^{1+\theta\alpha, 2}(\C)$ for every $\theta\in(0,\frac{1}{K})$, and
$$
\|D^{1+\theta\alpha}(\phi-z)\|_{L^2(\C)}\leq C_K\,\left(\|\mu\|_{W^{\alpha,2}(\C)}^\theta+\|\nu\|_{W^{\alpha,2}(\C)}^\theta\right)
$$
for some constant $C_K$ depending only on $K$.
\end{Theorem}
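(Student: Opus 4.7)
The plan is to differentiate the fixed-point representation of Lemma~\ref{homeoexistence}. We have $\phi(z)=z+\mathcal{C}h(z)$, where $h=\overline\partial\phi$ is the unique solution of
\begin{equation*}
(I-\mu\,T-\nu\,\overline{T})h=\mu+\nu
\end{equation*}
and $\|h\|_{L^p(\C)}$ is bounded in terms of $K$ and $p$ for every $p\in(\tfrac{2K}{K+1},\tfrac{2K}{K-1})$. Since $\partial(\phi-z)=Th$ and $\overline\partial(\phi-z)=h$, it will suffice to prove
\begin{equation*}
\|D^{\theta\alpha}h\|_{L^2(\C)}\le C_K\bigl(\|\mu\|_{W^{\alpha,2}(\C)}^{\theta}+\|\nu\|_{W^{\alpha,2}(\C)}^{\theta}\bigr)
\end{equation*}
for every $\theta\in(0,1/K)$, since then the $L^2$-boundedness of $T$ produces the corresponding bound for $D(\phi-z)\in W^{\theta\alpha,2}$, i.e.\ for $D^{1+\theta\alpha}(\phi-z)$.

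The key step is to apply $D^{\theta\alpha}$ to the fixed-point equation. Because both $T$ and $\overline{T}$ are Fourier multipliers of order zero, they commute with $D^{\theta\alpha}$, so $D^{\theta\alpha}(Th)=T(D^{\theta\alpha}h)$. Using the Kato--Ponce type estimate of Lemma~\ref{KePV}(a) with exponents $\alpha_1=\theta\alpha$, $\alpha_2=0$ and a pair $\tfrac{1}{p_1}+\tfrac{1}{p_2}=\tfrac12$, one gets
\begin{equation*}
D^{\theta\alpha}(\mu\,Th)=\mu\,T(D^{\theta\alpha}h)+Th\cdot D^{\theta\alpha}\mu+R_1,
\end{equation*}
with $\|R_1\|_{L^2}\le C\,\|D^{\theta\alpha}\mu\|_{L^{p_1}}\|Th\|_{L^{p_2}}$, and an analogous identity for the $\nu\,\overline{T}h$ term. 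Rearranging the equation yields
\begin{equation*}
(I-\mu\,T-\nu\,\overline{T})(D^{\theta\alpha}h)=D^{\theta\alpha}(\mu+\nu)+(D^{\theta\alpha}\mu)\,Th+(D^{\theta\alpha}\nu)\,\overline{Th}+R_1+R_2.
\end{equation*}
I would then invoke the Astala--Iwaniec--Saksman theorem \cite{AIS}: $I-\mu T-\nu\overline{T}$ is invertible on $L^p(\C)$ for $p\in(\tfrac{2K}{K+1},\tfrac{2K}{K-1})$, in particular on $L^2$, with operator norm of the inverse depending only on $K$. Hence the announced bound reduces to a uniform control of the right-hand side in $L^2(\C)$.

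The crucial bookkeeping is the choice $p_1=2/\theta$ and $p_2=2/(1-\theta)$. The interpolation inequality of Lemma~\ref{pneq2} with $\sigma=\theta$, combined with the Beltrami bound $\|\mu\|_\infty\le(K-1)/(K+1)$, delivers
\begin{equation*}
\|D^{\theta\alpha}\mu\|_{L^{2/\theta}(\C)}\le C_K\,\|\mu\|_{W^{\alpha,2}(\C)}^{\theta},\qquad \|D^{\theta\alpha}\mu\|_{L^2(\C)}\le C_K\,\|\mu\|_{W^{\alpha,2}(\C)}^{\theta},
\end{equation*}
and analogously for $\nu$. The constraint $\theta<1/K$ is precisely what places $p_2=2/(1-\theta)$ strictly inside Astala's interval $(\tfrac{2K}{K+1},\tfrac{2K}{K-1})$, so Lemma~\ref{homeoexistence} yields $\|Th\|_{L^{p_2}(\C)}\le C_K$. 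H\"older then controls each product in the right-hand side by $C_K(\|\mu\|_{W^{\alpha,2}}^{\theta}+\|\nu\|_{W^{\alpha,2}}^{\theta})$. The main obstacle is exactly this exponent matching: the threshold $\theta<1/K$ in the statement is the precise place where $2/(1-\theta)$ leaves Astala's invertibility range and the argument breaks down. The formal manipulations are justified by applying the scheme first to smooth compactly supported approximations of $\mu,\nu$ (for instance, by truncating the Neumann series defining $h$), where all quantities are classical, and then passing to the limit using the uniform bounds just obtained.
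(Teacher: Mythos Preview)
Your proposal is correct and follows essentially the same route as the paper: apply $D^{\theta\alpha}$ to the Beltrami fixed-point equation, use the Kato--Ponce commutator estimate (Lemma~\ref{KePV}) with the exponent pair $(p_1,p_2)=(2/\theta,\,2/(1-\theta))$, invert $I-\mu T-\nu\overline{T}$ on $L^2$, and observe that $\theta<1/K$ is exactly the condition $p_2<2K/(K-1)$ needed to bound $\|Th\|_{L^{p_2}}$. The only cosmetic differences are that the paper works with the mollified coefficients $\mu_n,\nu_n$ from the outset (you defer this to a closing remark) and uses the elementary Neumann-series bound $\|(I-\mu T-\nu\overline T)^{-1}\|_{L^2}\le\frac{K+1}{2}$ rather than invoking \cite{AIS}, which is unnecessary at $p=2$.
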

\begin{proof}
We consider a ${\cal C}^\infty$ function $\psi$, compactly supported inside of $\D$, such that $0\leq\psi\leq 1$ and $\int\psi=1$. For $n=1,2,...$ let $\psi_n(z)=n^2\,\psi(nz)$. Put
$$\mu_n(z)=\int_\C\mu(w)\,\psi_n(z-w)\,dA(w),$$
and
$$\nu_n(z)=\int_\C\nu(w)\,\psi_n(z-w)\,dA(w).$$
It is clear that both $\mu_n$, $\nu_n$ are compactly supported in $\frac{n+1}{n}\D$, $|\mu_n(z)|+|\nu_n(z)|\leq\frac{K-1}{K+1}$, $\|\mu_n-\mu\|_{W^{\alpha,2}(\C)}\to 0$ and $\|\nu_n-\nu\|_{W^{\alpha,2}(\C)}\to 0$ as $n\to\infty$. Indeed there is convergence in $L^p$ for all $p\in(1,\infty)$. Thus, by interpolation we then get that for any $0<\theta<1$
$$\lim_{n\to\infty}\|\mu_n-\mu\|_{W^{\alpha\theta,\frac{2}{\theta}}(\C)}+\|\nu_n-\nu\|_{W^{\alpha\theta,\frac{2}{\theta}}(\C)}=0$$
and in particular, the sequences $D^{\alpha\theta}\mu_n$ and $D^{\alpha\theta}\nu_n$ are bounded in $L^\frac{2}{\theta}(\C)$.\\
Let $\phi_n$ be the only $K$-quasiconformal mapping $\phi_n:\C\to\C$ satisfying
\begin{equation}\label{qe}
\overline\partial\phi_n=\mu_n\,\partial\phi_n+\nu_n\overline{\partial\phi_n}
\end{equation}and normalized by $\phi_n(z)-z={\cal O}_n(1/z)$ as $|z|\to\infty$. By the construction in Lemma \ref{homeoexistence}, $\phi_n(z)=z+{\cal C}h_n(z)$ where $h_n$ is the only $L^2(\C)$ solution to
$$h_n=\mu_n\,Th_n + \nu_n\,\overline{T h_n} + (\mu_n+\nu_n),$$
and ${\cal C}h_n$ denotes the Cauchy transform. As in Lemma \ref{homeoexistence}, $h_n$ belongs to $L^p(\C)$ for all $p\in(\frac{2K}{K+1},\frac{2K}{K-1})$ and $\|h_n\|_{L^p(\C)}\leq C=C(K,p)$;
in particular, $\phi_n-z$ is a bounded sequence in $W^{1,p}(\C)$. \\
\\
We now write equation \eqref{qe} as
$$
\overline\partial (\phi_n-z)=\mu_n\partial(\phi_n-z)+\nu_n\overline{\partial(\phi_n-z)}+\mu_n+\nu_n
$$
and take fractional derivatives. If $\beta=\alpha\theta$, we can use Lemma \ref{KePV} (a) to find two functions $E_\beta$, $F_\beta$ such that
$$\aligned
D^{\beta}&\overline\partial (\phi_n -z)\\
&=D^{\beta}\mu_n\,\partial(\phi_n-z)+\mu_n\,D^{\beta}\partial(\phi_n-z)+E_{\beta}\\
&+D^{\beta}\nu_n\,\overline{\partial(\phi_n-z)}+\nu_n\,D^{\beta}\overline{\partial(\phi_n-z)}+F_{\beta}.
\endaligned$$
Further, $E_{\beta}$ satisfies
\begin{equation}\label{reg}
\|E_{\beta}\|_{L^2(\C)}\leq C_0\,\|D^{\beta}\mu\|_{L^{p_1}(\C)}\,\|\partial(\phi_n-z)\|_{L^{p_2}(\C)},
\end{equation}
where $p_2$ is any real number with $2<p_2<\frac{2K}{K-1}$ and $\frac{1}{p_1}+\frac{1}{p_2}=\frac12$, and $C_0$ depends on $p_1,p_2$. Analogously,
\begin{equation}\label{reg2}
\|F_{\beta}\|_{L^2(\C)}\leq C_0\,\|D^{\beta}\nu\|_{L^{p_1}(\C)}\,\|\partial(\phi_n-z)\|_{L^{p_2}(\C)}.
\end{equation}
Now we notice that   we  have $D^\beta\partial\varphi=\partial D^\beta\varphi$ and similarly for $\overline\partial$. Further, if $\varphi$ is real then $D^\beta \varphi$ is also real. Thus
$$\aligned
\overline\partial &D^\beta(\phi_n -z)\\
&=\mu_n\,\partial D^\beta(\phi_n-z)+D^\beta\mu_n\,\partial(\phi_n-z)+E_\beta\\
&+\nu_n\,\overline{\partial(D^{\beta}(\phi_n-z))}+D^{\beta}\nu_n\,\overline{\partial(\phi_n-z)}+F_{\beta},
\endaligned$$
or equivalently
$$\aligned
(I-\mu_n T-\nu_n \overline{T})&(\overline\partial)\left(D^\beta(\phi_n-z)\right)\\
&=D^\beta\mu_n\,\partial(\phi_n-z)+D^\beta\nu_n\,\overline{\partial(\phi_n-z)}+E_\beta+F_\beta.
\endaligned$$
The term on the right hand side is actually an $L^2(\C)$ function. To see this, it suffices to choose in both \eqref{reg} and \eqref{reg2} the value $p_1=\frac{2}{\theta}$ for some $\theta\in(0,\frac{1}{K})$. Now, the operator $I-\mu_n T-\nu_n\overline{T}$ is continuously invertible in $L^2(\C)$, and a Neumann series argument shows that the norm of its inverse is bounded by $\frac{1}{2}(K+1)$. Thus,
$$\aligned
\|\overline\partial &D^\beta(\phi_n-z)\|_{L^2(\C)}\\
&\leq C_0\,\frac{K+1}{2}\,\left(\|D^\beta\mu_n\|_{L^\frac{2}{\theta}(\C)}+\|D^\beta\nu_n\|_{L^\frac{2}{\theta}(\C)}\right)\,\|\partial(\phi_n-z)\|_{L^{p_2}(\C)}\\
&\leq C_0\,\frac{K+1}{2}\,\left(\frac{K-1}{K+1}\right)^{1-\theta}\left(\|\mu_n\|_{W^{\alpha,2}(\C)}^\theta+\|\nu_n\|_{W^{\alpha,2}(\C)}^\theta\right)\,\|\partial(\phi_n-z)\|_{L^{p_2}(\C)}\\
&\leq C_0\,\frac{K+1}{2}\left(\|\mu_n\|_{W^{\alpha,2}(\C)}^\theta+\|\nu_n\|_{W^{\alpha,2}(\C)}^\theta\right)\,\|\partial(\phi_n-z)\|_{L^{p_2}(\C)}
\endaligned$$
where $C_0$ is the constant in (\ref{reg}). As $n\to\infty$, the right hand side is bounded by
$$
C_0\,\frac{K+1}{2}\,\left(\|\mu\|_{W^{\alpha,2}(\C)}^\theta+\|\nu\|_{W^{\alpha,2}(\C)}^\theta\right)\,C(K,p_2)
$$
because $\|\partial(\phi-z)\|_{L^{p_2}(\C)}\leq C(K,p_2)$. Hence, $\overline\partial D^\beta(\phi_n-z)$ is bounded in $L^2(\C)$, and thus also $\partial D^\beta(\phi_n-z)$, because $T$ is an isometry of $L^2(\C)$ and $T(\overline\partial D^\beta(\phi_n-z))=\partial D^\beta(\phi_n-z)$. Therefore, by passing to a subsequence we see that $D^\beta(\phi_n-z)$ converges in $W^{1,2}(\C)$, and as a consequence $\phi-z$ belongs to $W^{1+\beta,2}(\C)$. Further, we have the bounds
$$\|D^{1+\theta\alpha}(\phi-z)\|_{L^2(\C)}\leq C\,\left(\|\mu\|_{W^{\alpha,2}(\C)}^\theta+\|\nu\|_{W^{\alpha,2}(\C)}^\theta\right)
$$
for some constant $C$ depending only on $K$.
\end{proof}

\subsection{Regularity of complex geometric optics solutions}

We are now ready to give precise bounds on the Sobolev regularity
of the complex geometric optics solutions to the equation
$\overline\partial f=\mu\,\overline{\partial f}$ introduced in
Theorem \ref{existuniqbeltrami}.

\begin{Theorem}\label{regcgos}
Let $\mu\in W^{\alpha, 2}(\C)$ be a Beltrami coefficient,
compactly supported in $\D$, with
$\|\mu\|_\infty\leq\frac{K-1}{K+1}$ and
$\|\mu\|_{W^{\alpha,2}(\C)}\leq\Gamma_0$. Let $f=f_\mu(z,k)$ the
complex geometric optics solutions to the equation
$$\overline\partial f=\mu\,\overline{\partial f}.$$
For any $0<\theta<\frac{1}{K}$ we have that
$$f\in W^{1+\theta\alpha,2}_{loc}(\C).$$
Further, we have the estimate
$$\|D^{1+\alpha\theta}(  f_\mu)(\cdot,k)\|_{L^2(\D)}\leq C(K)\,e^{C|k|}\,\left(1+|k|\right)\,\left(\Gamma_0+|k|^\alpha\right)^\theta$$
were $C, C(K)>0$, and $C(K)$ depends only on $K$.
\end{Theorem}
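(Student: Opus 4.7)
Plan: The strategy is to mimic and adapt the proof of Theorem \ref{homeoregularity}, working directly on the equation satisfied by the ``multiplier'' $M_\mu(z,k)=e^{-ikz}f_\mu(z,k)$. Since $e^{ikz}$ is holomorphic, the equation $\dc f=\mu\,\overline{\partial f}$ is equivalent to
$$
\dc M \;=\; \tilde\mu\bigl(\overline{\partial M}-i\bar k\,\overline M\bigr),\qquad \tilde\mu(z)=\mu(z)\,e_{-k}(z),
$$
and if $g=M-1={\cal O}(1/z)$ at $\infty$, then
$$
\dc g \;=\; \tilde\mu\,\overline{\partial g}\;-\;i\bar k\,\tilde\mu\,\bar g\;-\;i\bar k\,\tilde\mu.
$$
So I have an $\R$-linear Beltrami equation with a lower-order potential term and a source of size $|k|\,\|\tilde\mu\|$.

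The first step is to control $\tilde\mu$ in the relevant Sobolev spaces. Since $\mu$ is supported in $\D$ and $|e_{-k}|\equiv 1$ with $\|e_{-k}\|_{W^{\alpha,2}(2\D)}\lesssim 1+|k|^\alpha$, the localized Leibniz rule (Remark \ref{LebnitzHolder}, in particular \eqref{KPVcompactsupport}) gives $\|\tilde\mu\|_{W^{\alpha,2}(\C)}\leq C(\Gamma_0+|k|^\alpha)$. By interpolation as in Lemma \ref{pneq2}, for every $\theta\in(0,1)$,
$$
\|D^{\theta\alpha}\tilde\mu\|_{L^{2/\theta}(\C)}\;\leq\;C\,\|\tilde\mu\|_\infty^{1-\theta}\,\|\tilde\mu\|_{W^{\alpha,2}}^{\theta}\;\leq\; C_K\,(\Gamma_0+|k|^\alpha)^\theta.
$$
The second step is a background $W^{1,p}$ estimate on $g$: Theorem \ref{existuniqbeltrami} and the Neumann series argument of Lemma \ref{homeoexistence}, applied to the above equation (the potential $-i\bar k\tilde\mu\,\bar g$ being absorbed via the compactly supported Cauchy transform), yield $\|\partial g\|_{L^p}+\|\dc g\|_{L^p}\leq C(K,p)\,e^{C|k|}$ for $p\in\bigl(\tfrac{2K}{K+1},\tfrac{2K}{K-1}\bigr)$, with the $e^{C|k|}$ coming from the source $-i\bar k\tilde\mu$.

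For the third step, apply $D^{\theta\alpha}$ to the equation for $g$ and use Lemma \ref{KePV}(a)(c) to move $D^{\theta\alpha}$ past $\tilde\mu$. With $G=D^{\theta\alpha}g$ this gives
$$
\dc G \;-\; \tilde\mu\,\overline{\partial G}\;+\;i\bar k\,\tilde\mu\,\bar G \;=\; E,
$$
where the ``commutator+source'' term $E\in L^2(\C)$ is bounded by
$$
\|E\|_{L^2}\;\leq\;C\,\|D^{\theta\alpha}\tilde\mu\|_{L^{2/\theta}}\Bigl(\|\partial g\|_{L^{2/(1-\theta)}}+|k|\,\|g\|_{L^{2/(1-\theta)}}\Bigr)+|k|\,\|D^{\theta\alpha}\tilde\mu\|_{L^2}.
$$
Combining with Steps 1--2 one gets $\|E\|_{L^2}\leq C_K\,e^{C|k|}(1+|k|)(\Gamma_0+|k|^\alpha)^\theta$. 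Now exactly as in Theorem \ref{homeoregularity}, the operator $(I-\tilde\mu\,\overline T\,\overline{(\cdot)})$ is invertible on $L^2(\C)$ with inverse norm $\leq (K+1)/2$ (the additional $i\bar k\tilde\mu\bar G$ term is absorbed using the Cauchy transform representation $G=\mathcal{C}(\dc G)$, which costs an extra factor $(1+|k|)$). This yields
$$
\|\dc D^{\theta\alpha}g\|_{L^2(\C)}+\|\partial D^{\theta\alpha}g\|_{L^2(\C)}\;\leq\; C_K\,e^{C|k|}(1+|k|)(\Gamma_0+|k|^\alpha)^\theta,
$$
using that $T$ is an $L^2$-isometry.

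Finally, writing $f_\mu=e^{ikz}(1+g)$ and applying the Leibniz rule (Lemma \ref{KePV}(a)) together with $\|D^{\gamma}e^{ikz}\|_{L^\infty(\D)}\lesssim(1+|k|)^\gamma$, one localizes the estimate to $\D$ and obtains
$$
\|D^{1+\theta\alpha}(f_\mu)(\cdot,k)\|_{L^2(\D)}\;\leq\; C(K)\,e^{C|k|}(1+|k|)(\Gamma_0+|k|^\alpha)^\theta.
$$
The main obstacle I expect is keeping the $k$-dependence explicit while inverting the operator: the potential term $-i\bar k\tilde\mu\,\bar g$ is genuinely non-trivial because $g\notin L^2(\C)$ globally (only $\nabla g\in L^2$), so the inversion has to be set up either by first multiplying by a cutoff equal to $1$ on $\D$ and then applying the $L^2$ machinery, or by representing $g=\mathcal{C}h$ and inverting at the level of $h$. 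A secondary technical point is choosing the interpolation exponent $\theta\in(0,1/K)$ so that the Leibniz estimates remain inside the admissible range $p<\tfrac{2K}{K-1}$ of Astala's area distortion, which is exactly the constraint appearing in the theorem.
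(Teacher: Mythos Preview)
Your approach is genuinely different from the paper's, and the difference is instructive.

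The paper does \emph{not} work with $M_\mu=e^{-ikz}f_\mu$. Instead it uses the representation $f_\mu=e^{ik\phi}$ (\cite[Lemma 7.1]{AP}), where $\phi$ is the normalized quasiconformal homeomorphism solving
\[
\dc\phi=-\mu\,\frac{\bar k}{k}\,e_{-k}(\phi)\,\overline{\partial\phi}.
\]
The crucial point is that this is a \emph{pure} $\R$-linear Beltrami equation, with no zeroth-order potential term: the coefficient $-\mu\,\frac{\bar k}{k}\,e_{-k}(\phi)$ has $L^\infty$ norm $\leq\kappa$ and, after a short computation using the Leibniz rule and $\|\phi\|_{L^\infty(\D)}\le 4$, it lies in $W^{\alpha,2}$ with norm $\leq C(\Gamma_0+|k|^\alpha)$. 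Hence Theorem \ref{homeoregularity} applies \emph{verbatim} to $\phi-z$, giving $\|D^{1+\theta\alpha}(\phi-z)\|_{L^2}\leq C_K(\Gamma_0+|k|^\alpha)^\theta$. The passage from $\phi$ to $f=e^{ik\phi}$ is then a routine Leibniz/interpolation calculation, and the factor $e^{C|k|}$ arises transparently from $\|e^{ik\phi}\|_{L^\infty(\D)}\le e^{4|k|}$.

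Your route via $g=M_\mu-1$ keeps the potential term $-i\bar k\,\tilde\mu\,\bar g$, and this is where there is a real gap. In Step~2 you claim $\|\nabla g\|_{L^p}\leq C(K,p)\,e^{C|k|}$ follows from a Neumann series as in Lemma~\ref{homeoexistence}, but for large $|k|$ the operator $h\mapsto\tilde\mu\,\overline{Th}-i\bar k\,\tilde\mu\,\overline{\mathcal C h}$ has $L^p$ norm of order $|k|$, so the Neumann series diverges; Fredholm theory gives invertibility but no explicit bound on the inverse. The only clean way I know to get the $e^{C|k|}$ bound on $\nabla g$ is via $g=e^{ik(\phi-z)}-1$, i.e.\ exactly the paper's representation. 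The same issue recurs in Step~3: after applying $D^{\theta\alpha}$ you have to control $|k|\,\|\tilde\mu\,\bar G\|_{L^2}$ on the right-hand side, and your claim that this is ``absorbed via $G=\mathcal C(\dc G)$ at cost $(1+|k|)$'' is not justified --- $\dc G$ is not compactly supported (fractional derivatives are nonlocal), and in any case treating it as part of the operator brings back the divergent-Neumann problem. One can instead bound $\|G\|_{L^2(\D)}$ a priori by interpolating $\|g\|_{L^\infty}\leq e^{C|k|}$ and $\|\nabla g\|_{L^p}\leq C|k|e^{C|k|}$, but both of these bounds again come from the $\phi$ representation.

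In short: your scheme can very likely be completed, but the steps you flag as routine (``absorbed via the Cauchy transform'', ``costs an extra $(1+|k|)$'') hide the same difficulty that the $\phi$ representation is designed to eliminate. The paper's approach is cleaner precisely because writing $f=e^{ik\phi}$ removes the potential term entirely and reduces everything to a direct application of Theorem~\ref{homeoregularity}.
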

\begin{proof}
The existence and uniqueness of the complex geometric optics
solutions comes from \cite[Theorem 4.2]{AP} (see Theorem
\ref{existuniqbeltrami} in the present paper). Secondly, it is
shown in \cite[Lemma 7.1]{AP} that $f$ may be represented as
$$f=e^{ik\phi}$$
where $\phi:\C\to\C$ is the only $W^{1,2}_{loc}(\C)$ homeomorphism
solving
\begin{equation}\label{equationphi}
\overline\partial\phi=-\mu\,\frac{\overline{k}}{k}\,e_{-k}(\phi)\,\overline{\partial\phi}
\end{equation}
and normalized by the condition $|\phi(z)-z|\to 0$ as
$|z|\to\infty$.
Here $e_{-k}(\phi(z))=e^{-ik\phi(z)-i\overline{k}\overline{\phi(z)}}$ is a unimodular function
so that $|e_{-k}(\phi(z))|=1$. In
particular, $\phi$ is conformal outside of $\D$, because $\supp(\mu)\subset\D$. Thus by Koebe $\frac14$ theorem
\begin{equation}\label{linftyphi}
\phi(\D) \subset 4 \D  \Rightarrow  \|\phi (\cdot,k)\|_{L^\infty(\D)}
\le 4
\end{equation}
Our first task is to determine the Sobolev regularity of the coefficient $\mu\,e_{-k}(\phi)$ in equation \eqref{equationphi}. We will argue by interpolation. Firstly, by (\ref{linftyphi}),
\[\|e_{-k}(\phi)\|_{L^2(\D)}\le |\D|^\frac12.\]
For the $L^2$ norm of the derivative, we invoke Lemma \ref{homeoexistence} to obtain that
\[ \|D(\phi -z)\|_{L^2(\C)} \le C(K),\]
Thus,
$$
\|D(e_{-k}(\phi))\|_{L^2(\D)}\leq C(K)\,|k|\, |\D|^\frac12.
$$
and by interpolation we arrive to,
$$
\|e_{-k}(\phi)\|_{{W}^{\alpha,2}(\D)}\leq C(K)\,|\D|^\frac12\,|k|^\alpha.
$$
Now we will use the remark~\ref{LebnitzHolder} to see that that $\mu\,e_{-k}(\phi) $ belongs also to $W^{\alpha,2}(\C)$. Since $e_{-k}(\phi)$ is unimodular, the $L^2$ bound is obvious. By virtue of (\ref{KPVcompactsupport}) we have that
\begin{equation}\label{coefficient}
\begin{aligned}
\|D^\alpha(\mu\,e_{-k}(\phi))\|_{L^2(\C)}&\le
C(\|D^\alpha\mu\|_{L^2(\C)} \| e_{-k}(\phi)\|+ \kappa \|D^\alpha(e_{-k}(\phi))\|_{L^2(\D)}) \\
&\leq C\,|\D|^\frac12\,(|k|^\alpha+ \Gamma_0)\\
\end{aligned}
\end{equation}
The bound (\ref{coefficient}) allows us to apply Theorem \ref{homeoregularity} to equation \eqref{equationphi}. We obtain that  $\phi_0(z)=\phi(z)-z$ satisfies the estimate
\begin{equation}\label{phiregularity}
\|D^{\alpha\theta}(\partial\phi_0)\|_{L^2(\C)}+\|D^{\alpha\theta}(\overline\partial\phi_0)\|_{L^2(\C)}\leq
C_K\,(\Gamma_0+|k|^\alpha)^\theta
\end{equation}
for $\theta\in(0,\frac{1}{K})$. We push this bound to $f$. Since $f(z)=e^{ik\phi(z)}$, we have
$$\partial f(z)=e^{ik\phi(z)}\,ik\partial\phi(z)$$
and again from Lemma \ref{KePV} (c), for any disk $D$,
\begin{equation}\label{3terms}
\aligned \|D^{\alpha\theta}(\partial f)\|_{L^2(D)}\leq
&\|D^{\alpha\theta}(e^{ik\phi})\,ik\partial\phi\|_{L^2(D)}
+\|ik\,D^{\alpha\theta}(\partial\phi)\,e^{ik\phi}\|_{L^2(D)}\\
&+C\,\|e^{ik\phi}\|_{L^{\infty}(D)}\,\|D^{\alpha\theta}(\partial\phi)\|_{L^{2}(D)}.
\endaligned
\end{equation}
For the second and third terms on the right hand side above, we notice that \eqref{linftyphi} yields that
\begin{equation}\label{Linftybound}
\sup_{z\in \D}|e^{ik\phi(z)}|\leq e^{4|k|},
\end{equation}
which combined with \eqref{phiregularity} provides us with the estimate
\begin{equation}
\aligned
\|ik\,D^{\alpha\theta}(\partial\phi)\,e^{ik\phi}\|_{L^2(\D)}
&\leq |k|\,e^{C|k|}\,\|D^{\alpha\theta}(\partial\phi)\|_{L^2(\D)}\\
&\leq|k|\,e^{C|k|}\,C_K\,(\Gamma_0+|k|^\alpha)^\theta.
\endaligned
\end{equation}
Concerning the first term in \eqref{3terms}, we recall that $L^p$
bounds for $\partial\phi$ are possible only for
$p\in(\frac{2K}{K+1},\frac{2K}{K-1})$. This forces us to look for
$L^q$ bounds for $D^{\alpha\theta}(e^{ik\phi})$, for some $q>2K$.
These bounds are easily obtained by interpolation. More precisely,
we know the $L^\infty$ bound given at \eqref{linftyphi}. Further,
we have also a $W^{1,2}$ bound,
$$\aligned
\int_\D|\partial(e^{ik\phi(z)})|^2\,dA(z) \leq
|k|^2\,e^{C|k|}\,K\,\int_\D\,J(z,\phi)\,dA(z) \leq
|k|^2\,e^{C|k|}\,K\,|\D|.
\endaligned$$
Thus, by interpolation we obtain
$$\aligned
\|D^{\alpha\theta}(e^{ik\phi})\|_{L^\frac{2}{\alpha\theta}(\D)}
&\leq\,C\,\|e^{ik\phi}\|_{L^\infty(\D)}^{1-\alpha\theta}\,\|\partial(e^{ik\phi})\|_{L^2(\D)}^{\alpha\theta}\\
&\leq
|k|^{\alpha\theta}\,e^{C|k|}\,(K|\D|)^\frac{\alpha\theta}{2}\leq
C(K)\,|k|^{\alpha\theta}\,e^{C|k|}.
\endaligned$$
Now recall that $0<\theta<\frac{1}{K}$ is fixed, and let
$p=\frac{2}{\alpha\theta}$. If we now consider any real number $s$
such that $\frac{2}{1-\alpha\theta}<s<\frac{2K}{K-1}$, we obtain
$$\aligned
\|D^{\alpha\theta}(e^{ik\phi})\,ik\partial\phi\|_{L^2(\D)}
&\leq|k|\,\|D^{\alpha\theta}(e^{ik\phi})\|_{L^\frac{2}{\alpha\theta}(\D)}\,\|\partial\phi\|_{L^\frac{2}{1-\alpha\theta}(\D)}\\
&\leq C(K)\,|k|^{1+\alpha\theta}\,e^{C|k|}\,\|\partial\phi\|_{L^s(\D)}\\
&= C(K)\,|k|^{1+\alpha\theta}\,e^{C|k|}
\endaligned
$$
because the normalization on $\phi$ forces uniform bounds for
$\|\partial\phi\|_{L^s(\D)}$ depending only on $K$. Summarizing,
\eqref{3terms} gives us the bound
$$\|D^{\alpha\theta}(\partial f)\|_{L^2(\D)}\leq C(K)\,e^{C|k|}\,\left(1+|k|\right)\,\left(\Gamma_0+|k|^\alpha\right)^\theta.$$
Similar calculations give the corresponding bound for
$D^{\alpha\theta}(\overline\partial f)$.
\end{proof}

\noindent
We will also need the following bounds in Section \ref{endofproof}.

\begin{Lemma}\label{inversecgos}
Let $\mu$ be a Beltrami coefficient, compactly supported in $\D$.
Assume that $\|\mu\|_\infty\leq \frac{K-1}{K+1}$ and
$\|\mu\|_{W^{\alpha,2}(\C)}\leq\Gamma_0$. Let $f=f_\mu(z,k)$
denote the complex geometric optics solutions to
$$\overline\partial f=\mu\,\overline{\partial f}.$$
Let $p<2/(K-1)$.  Then, for any disk $D$
$$\int_{D}\left|\frac{1}{\partial f(z)}\right|^p\leq C$$
where the constant $C$ depends on $\diam(D)$, $k$, $K$ and
$\Gamma_0$.
\end{Lemma}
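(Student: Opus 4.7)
The plan is to exploit the representation $f_\mu(z,k) = e^{ik\phi(z,k)}$ from \cite[Lemma 7.1]{AP} already invoked in the proof of Theorem \ref{regcgos}, so that
$$\partial f_\mu(z,k) = ik\,e^{ik\phi(z,k)}\,\partial\phi(z,k),$$
and thus
$$\frac{1}{|\partial f_\mu(z,k)|} = \frac{1}{|k|}\,|e^{-ik\phi(z,k)}|\,\frac{1}{|\partial\phi(z,k)|}.$$
By \eqref{linftyphi}, $\phi(\D)\subset 4\D$ (in fact $\phi$ is conformal off $\D$, and by the normalization and Koebe's Theorem $\phi(D)$ is contained in a disk whose radius depends only on $\diam(D)$), so the first two factors are bounded by $\frac{1}{|k|}e^{C|k|}$ on $D$ with $C$ depending only on $\diam(D)$. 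The problem therefore reduces to showing
$$\int_D \frac{dA(z)}{|\partial\phi(z,k)|^p} \leq C(K,\diam D)$$
for every $p<2/(K-1)$.

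For this, I would exploit the fact that $\phi=\phi(\cdot,k)$ is a $K$-quasiconformal homeomorphism. Denoting its Beltrami coefficient by $\mu_\phi = \overline\partial\phi/\partial\phi$, I use the pointwise identity
$$|\partial\phi|^2 = \frac{J(z,\phi)}{1-|\mu_\phi(z)|^2},$$
so that $|\partial\phi|^{-p} = (1-|\mu_\phi|^2)^{p/2}\,J(z,\phi)^{-p/2}\leq J(z,\phi)^{-p/2}$. Then I apply the change of variables $w=\phi(z)$, under which $dA(z) = J(w,\phi^{-1})\,dA(w)$ and $J(\phi^{-1}(w),\phi)^{-1} = J(w,\phi^{-1})$, obtaining
$$\int_D\frac{dA(z)}{|\partial\phi(z,k)|^p} \leq \int_{\phi(D)} J(w,\phi^{-1})^{\frac{p}{2}+1}\,dA(w).$$

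The main point is now the higher integrability of the Jacobian of a $K$-quasiconformal mapping, the sharp form due to Astala: $J(\cdot,\phi^{-1})\in L^q_{loc}(\C)$ for every $q<\frac{K}{K-1}$, with local bounds depending only on $K$ (together with the normalization, which is inherited from $\phi$). Choosing $q=\frac{p}{2}+1$ is admissible precisely when $p<\frac{2}{K-1}$, which is exactly the hypothesis. Since $\phi(D)$ is contained in a disk of bounded size, this yields the desired estimate with a constant depending on $k$, $K$, $\diam(D)$, and through the normalization of $\phi^{-1}$ on the $\Gamma_0$-dependent constants from Theorem~\ref{homeoregularity} (though in fact only ellipticity is used).

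The main obstacle is essentially bookkeeping rather than substance: one must verify that the dependence of $\phi^{-1}$ on $k$ (arising from the nonlinear equation \eqref{equationphi}) is harmless, since Astala's area distortion estimates for the inverse of a $K$-quasiconformal map are universal in $K$ and depend only on the normalization, both of which are controlled uniformly in $k$ by Lemma~\ref{homeoexistence}. No new harmonic analysis beyond Astala's theorem enters.
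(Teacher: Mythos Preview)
Your proposal is correct and follows essentially the same route as the paper: represent $f=e^{ik\phi}$, bound the factor $e^{ik\phi}$ uniformly away from zero via \eqref{linftyphi}, and reduce to $L^p$-integrability of $1/\partial\phi$ for a normalized $K$-quasiconformal $\phi$. The paper's proof is extremely terse at this last step (and in fact writes ``$\partial\phi\in L^p$'' where $1/\partial\phi$ is meant); you have supplied the missing justification via the identity $|\partial\phi|^{-p}\le J(\cdot,\phi)^{-p/2}$, the change of variables to $\phi^{-1}$, and Astala's sharp higher integrability of the Jacobian, which is exactly what makes the threshold $p<2/(K-1)$ appear. Your closing remark is also apt: the bound uses only the ellipticity $K$ and the normalization, so the dependence on $\Gamma_0$ in the statement is not actually needed.
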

\begin{proof}
The function $f$ can be represented as
$$f=e^{ik\phi}$$
so that
$$\frac{1}{\partial f}=\frac{1}{e^{ik\phi}}\,\frac{1}{ik\partial\phi}.$$
As we have seen in \eqref{Linftybound} in the proof of the above Theorem
 gives also lower local uniform bounds for
$e^{ik\phi}$. Thus, only $L^p$ bounds for $\partial\phi$ are
needed. But these bounds come from the fact that $\phi$ is a
normalized $K$-quasiconformal mapping, so that
$$\partial\phi\in L^p$$
\end{proof}


\section{Uniform subexponential decay}\label{assymptotics}

We investigate the decay property of complex geometric optic solutions to the equation
$$\overline\partial f_{\lambda}=\lambda\mu\,\overline{\partial f_{\lambda}},$$
where $\lambda\in\partial\D$ is a fixed complex parameter, and $\mu\in W^{\alpha,2}_0(\C)$ is a Beltrami coefficient compactly supported in $\D$. It turns out that $f_{\lambda}$ admits the representation
$$f_{\lambda}(z,k)=e^{ik\phi_{\lambda}(z,k)}$$
where $\phi_{\lambda}$ satisfies the following properties (see \cite[Lemma 7.1]{AP} or the proof of Theorem \ref{regcgos} above):
\begin{enumerate}
\item $\phi_{\lambda}(\cdot,k):\C\to\C$ is a quasiconformal mapping.
\item $\phi_{\lambda}(z,k)=z+{\cal O}_k(1/z)$ as $|z|\to\infty$
\item $\phi_{\lambda}$ satisfies the {\em{nonlinear}} equation
\begin{equation}\label{nonlinearbeltrami}
\overline\partial\phi_{\lambda}(z, k)=-\lambda\,\mu(z)\,\frac{\overline k}{k}\,e_{-k}(\phi_{\lambda}(z,k))\,\overline{\partial\phi_{\lambda}(z,k)}
\end{equation}
\end{enumerate}
As was explained in Section 2, our goal is to obtain a uniform decay of the type
\begin{equation}\label{decay2}
|\phi_\lambda(z,k)-z|\leq \frac{C}{|k|^{b\alpha}}
\end{equation}
The precise statement can be found at Theorem \ref{nonlineardecay}. For the proof, we will mainly follow the
lines of both \cite{AP,BFR}. This consists on investigating first the behaviour of {\em{linear}} Beltrami equations with the rapidly oscillating coefficients $\mu(z)\,e_{-k}(z)$, and then treat the nonlinearity as a perturbation.


\subsection{Estimates for the linear equation}\label{linearsubsection}
\noindent
As usually, $\mu$ denotes a Beltrami coefficient, compactly supported in $\D$, with the ellipticity bound
$$\|\mu\|_{L^\infty(\Omega)}\leq\frac{K-1}{K+1}=\kappa$$
and the smoothness assumption
$$\|\mu\|_{W^{\alpha,2}(\C)}=\|\mu\|_{L^2(\C)}+\|D^\alpha\mu\|_{L^2(\C)}\leq\Gamma_0$$
for some $0<\alpha<1$ and $\Gamma_0>0$. For each complex numbers
$k\in\C$ and $\lambda\in\D$, let $\psi=\psi_\lambda(z,k)$ be the
only homeomorphic solution to the problem,
\begin{equation}
\begin{cases}
\overline\partial\psi(z,k)=\frac{\overline{k}}{k}\,\lambda\,e_{-k}(z)\,\mu(z)\,{\partial\psi(z,k)}\\
\psi(z,k)-z={\cal O}(1/z),\hspace{1cm}z\to\infty
\end{cases}
\end{equation}
Then $\psi$ can be represented by means of a Cauchy transform
\begin{equation}
\psi(z,k)-z=\int_\C \overline \partial \psi(w,k) \Phi(z,w) dA(w),
\end{equation}
where $\Phi(z,w)=\frac{\psi_\D(w)}{z-w}$ for  a smooth cutoff function $\psi_\D=1$ on $\D$ (in particular on the support of $\overline\partial\psi$). We need subtle properties for both terms.

\begin{Lemma}\label{decomposition}
Let $n_0$ be given, and let $s\ge 2$ be such that
$$\kappa\,\|T\|_s<1.$$
There exists a decomposition
 $\overline\partial\psi_\lambda(z,k)=g_\lambda(z,k)+h_\lambda(z,k)$ satisfying the following properties:
\begin{enumerate}
\item $\|h_\lambda(\cdot, k)\|_s\leq C(\kappa,
s)\,\left(\kappa\,\|T\|_s\right)^{n_0}.$
\item $\|g_\lambda(\cdot,
k)\|_s\leq C(\kappa).$
\item For  $R>0$ and  $|k|>2R$,
$$\left(\int_{|\xi|<R}|\widehat{g}_\lambda(\xi,k)|^qdA(\xi)\right)^\frac{1}{q}\leq C(\alpha,\kappa,p)\,M(p)^{n_0}\,\frac{\Gamma_0}{|k|^{\alpha}}
$$
where $1<p<2$, $q=\frac{p}{p-1}$, $\widehat{g}_\lambda(\xi,k)=({g}_\lambda(\cdot,k))^\wedge(\xi)$ and the
value of $M(p)$ is given in (\ref{Mp}).
\end{enumerate}
\end{Lemma}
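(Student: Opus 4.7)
The plan is to represent $\overline\partial\psi_\lambda$ as a Neumann series and split it into a finite sum (giving $g_\lambda$) and a tail (giving $h_\lambda$). Writing $\tilde\mu_k(z):=\lambda\,(\overline k/k)\,e_{-k}(z)\,\mu(z)$, the equation for $h:=\overline\partial\psi_\lambda$ reads $h=\tilde\mu_k+\tilde\mu_k\,Th$, and since $\|\tilde\mu_k\|_\infty\le\kappa$ with $\kappa\|T\|_s<1$, one obtains
$$\overline\partial\psi_\lambda=\sum_{n=0}^\infty(\tilde\mu_k T)^n\tilde\mu_k.$$
I would set $g_\lambda:=\sum_{n=0}^{n_0-1}(\tilde\mu_k T)^n\tilde\mu_k$ and $h_\lambda:=(\tilde\mu_k T)^{n_0}\overline\partial\psi_\lambda$.

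Estimates (1) and (2) are then straightforward. For (1), iterating the $L^s$ bound of $\tilde\mu_k T$ yields $\|h_\lambda\|_s\le(\kappa\|T\|_s)^{n_0}\|\overline\partial\psi_\lambda\|_s$, and the last norm is bounded by $C(\kappa)$ via the invertibility of $I-\tilde\mu_k T$ on $L^s$, exactly as in Lemma \ref{homeoexistence}. For (2), the geometric series bound $\sum_{n=0}^{n_0-1}\kappa^{n+1}\|T\|_s^n\,|\D|^{1/s}\le C(\kappa)$ is uniform in $n_0$.

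The substantive estimate is (3), which will exploit the oscillation carried by $e_{-k}$. Factor the $n$-th summand as $\tilde\mu_k\cdot v_n$, where $v_0\equiv 1$ and $v_n:=T[(\tilde\mu_k T)^{n-1}\tilde\mu_k]$ for $n\ge 1$. Modulation in physical space becomes translation in Fourier, so
$$\widehat{\tilde\mu_k v_n}(\xi)=\lambda\,\frac{\overline k}{k}\,\widehat{\mu v_n}(\xi+k).$$
After the substitution $\eta=\xi+k$, the $L^q$ norm on $\{|\xi|<R\}$ transforms into the $L^q$ norm of $\widehat{\mu v_n}$ on $\{|\eta|\ge|k|/2\}$ (thanks to $|k|>2R$). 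On that set, $|\eta|\ge|k|/2$ allows us to extract a factor of $|k|^{-\alpha}$:
$$\left(\int_{|\eta|\ge|k|/2}|\widehat{\mu v_n}(\eta)|^q\,dA(\eta)\right)^{1/q}\le C\,|k|^{-\alpha}\,\bigl\||\eta|^\alpha\widehat{\mu v_n}\bigr\|_{L^q(\C)},$$
and Hausdorff--Young (applicable because $q=p'$ with $1<p<2$) converts the right-hand side into $C\,|k|^{-\alpha}\,\|D^\alpha(\mu v_n)\|_{L^p}$.

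The final step is to control $\|D^\alpha(\mu v_n)\|_{L^p}$ by the fractional Leibniz rule from Lemma \ref{KePV} together with the compact support of $\mu$, producing a combination of $\|D^\alpha\mu\|_{L^2}\le\Gamma_0$ with $L^{r}$ norms of $v_n$ for suitable $r$. Those $L^{r}$ norms of $v_n$ are estimated iteratively by $(\kappa\|T\|_{r})^{n}$ times the corresponding norm of $\tilde\mu_k$, which produces the factor $M(p)^{n_0}$ after summing $n=0,\dots,n_0-1$. The main obstacle will be to apply the Leibniz rule so that only the outermost factor of $\mu$ absorbs the fractional derivative (supplying the decay $\Gamma_0/|k|^\alpha$) while the tower $v_n$ is controlled purely by $L^{r}$ mapping properties of $T$; otherwise, distributing $D^\alpha$ through the inner layers would force us to differentiate the modulations $e_{-k}$, producing spurious powers of $|k|^\alpha$ that would destroy the desired decay.
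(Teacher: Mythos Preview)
Your decomposition into the finite Neumann sum $g_\lambda$ and the tail $h_\lambda$ is exactly the paper's, and your arguments for (1) and (2) match. The gap is in (3), and you have correctly located it: the Leibniz rule does \emph{not} allow you to place all of $D^\alpha$ on the outermost $\mu$. The inequality $\|D^\alpha(\mu v_n)\|_p \lesssim \|D^\alpha\mu\|_{p_1}\|v_n\|_{p_2}+\|\mu\|_\infty\|D^\alpha v_n\|_p$ always produces the second term, and since $v_n = T(\tilde\mu_k v_{n-1})$ still contains $n$ copies of $e_{-k}$ buried inside, any attempt to estimate $\|D^\alpha v_n\|_p$ recursively forces $D^\alpha$ onto those inner modulations, producing a factor $|k|^\alpha$ at each step. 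Your scheme therefore destroys the decay rather than producing it.

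The paper's resolution is to pull out \emph{all} the oscillations at once via the conjugation $T_n\varphi := e_n T(e_{-n}\varphi)$. This yields the identity $(\tilde\mu_k T)^n\tilde\mu_k = (\lambda\overline k/k)^{n+1}e_{-(n+1)}f_n$, where the $f_n$ satisfy the recursion $f_0=\mu$, $f_n=\mu\,T_n f_{n-1}$ and contain no $e_{-k}$ factors whatsoever. Because each $T_n$ is a Fourier multiplier, it commutes with $D^\alpha$ and is bounded on $W^{\alpha,p}$ uniformly in $k$; hence the Leibniz rule applied to $\mu\,T_n f_{n-1}$ gives a recursion for $\|f_n\|_{W^{\alpha,p}}$ with constants independent of $k$ (this is Lemma \ref{neumannestimates}, and is where $M(p)^{n_0}$ comes from). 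On the Fourier side the single factor $e_{-(n+1)}$ becomes a translation by $(n+1)k$, so that on $\{|\xi|<R\}$ with $|k|>2R$ one lands in $\{|\zeta|\ge (n+1)|k|-R\}$; the Hausdorff--Young bound (Lemma \ref{fourierdecay}) then extracts the factor $|k|^{-\alpha}$ from $\|f_n\|_{W^{\alpha,p}}$. In short, the missing idea is to absorb the modulations into the operators \emph{before} differentiating, rather than trying to avoid differentiating them afterwards.
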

\noindent The proof will rely on the Neumann series expression of $\overline\partial\psi$. For this, we consider the unimodular factors
$$e_n(z)=e^{in(kz+i\overline{k}\overline{z})}.$$
An idea which goes back to \cite{AP} is dealing with the unimodular factors $e_n$ conjugating them with the Beurling transform. Namely, we express
\begin{equation}\label{Neumann}
\overline\partial\psi=\sum_n \left(\frac{\overline{k}}{k}\lambda\right)^{n+1}e_{-(n+1)}\, f_n
\end{equation}
where
\begin{equation}\label{neumannseries}
\begin{cases}
f_0=\mu\\
f_n=\mu\, T_n (f_{n-1}),\,n=1,2,\dots
\end{cases}
\end{equation}
Here by $T_n$ we denote a singular integral operator defined by the rule
$$T_n(\varphi)=e_n\,T(e_{-n}\varphi)$$
where
 $T$ is the usual Beurling transform \eqref{Beurling}. It is not hard to see that $T_n$ is represented, at the frequency side, by a unimodular multiplier of the form
$$\widehat{T_n\varphi}(\xi)=\frac{\xi-n}{\overline{\xi - n}}\widehat\varphi(\xi)$$
Thus,
$$\|T_n\|_{L^2(\C)}=\|T_n\|_{L^2(\C)\to L^2(\C)}=1$$
and $T_n$ is an isometry of $L^2(\C)$. In fact, for any $1<p<\infty$,
$$\|T_n(\varphi)\|_{L^p(\C)}=\|T(e_{-n}\varphi)\|_{L^p(\C)}\leq \|T\|_{L^p(\C)}\,\|\varphi\|_{L^p(\C)}$$
because $|e_n(z)|=1$, so that $\|T_n\|_{L^p(\C)}=\|T\|_{L^p(\C)}$.
As $T_n$ is given by a Fourier multiplier, it commutes with any constant coefficients differential operator $D$ and  thus,
$$
\|T_n\varphi\|_{W^{1,p}(\C)}=\|T_n\varphi\|_{L^p(\C)}+\|T_n(D\varphi)\|_{L^p(\C)}\leq\|T\|_p\,\|\varphi\|_{W^{1,p}(\C)}
$$
and therefore $\|T_n\|_{W^{1,p}(\C)}\leq\|T\|_{L^p(\C)}$. Furthermore, the complex interpolation method gives that for any $0<\beta<1$,
\begin{equation}\|T_n\|_{W^{\beta, p}(\C)}\leq C_0\,\|T\|_{L^p(\C)}
\end{equation}
where $C_0>0$ is a universal constant.\\
\\
Let $1<p<2$ be fixed. We declare
\begin{equation}\label{Mp} \aligned
B&=B(p)=\|T_n\|_{W^{\alpha,p}(\C)}\leq\|T\|_{L^p(\C)}\\
D&=D(p)=\|T_n\|_{L^\frac{2p}{2-p}(\C)}\leq\|T\|_{L^\frac{2p}{2-p}(\C)}\\
M&=M(p)=B+D
\endaligned
\end{equation}
It is well known that the series $\sum_n f_n$ defines a compactly supported $L^2(\C)$ function (actually $L^p(\C)$ for any $1+\kappa<p<1+\frac{1}{\kappa}$). Next lemma yields Sobolev estimates for $f_n$  in terms of the Sobolev norm $\|\mu\|_{W^{\alpha,2}(\C)}$.

\begin{Lemma}\label{neumannestimates}
For any $1<p<2$ there exists a constant  $C=C(p)$ such that
$$\|f_n\|_{W^{\alpha,p}(\C)}\leq C(p)\,\Gamma_0\,\kappa^{n-1}\,(M(p))^n,$$
for any $n=1,2,...$.
\end{Lemma}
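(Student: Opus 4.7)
The plan is to establish the estimate by induction on $n$, using the recursion $f_n=\mu\,T_n f_{n-1}$ together with the generalized Leibniz rule (Remark \ref{LebnitzHolder}) and the key fact that each $T_n$ is a Fourier multiplier with unimodular symbol, so that $T_n$ commutes with $D^\alpha$ and acts as an isometry on $L^2$.

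First I would track an auxiliary $L^{2p/(2-p)}$ bound. From $\|\mu\|_\infty\le\kappa$ and $\|T_n\|_{L^q\to L^q}=\|T\|_{L^q\to L^q}$, the recursion gives $\|f_n\|_{L^{2p/(2-p)}}\le\kappa\,D(p)\,\|f_{n-1}\|_{L^{2p/(2-p)}}$. Starting from $\|\mu\|_{L^{2p/(2-p)}}\le|\D|^{(2-p)/(2p)}\kappa$ (valid since $\supp\mu\subset\D$), iteration yields $\|f_n\|_{L^{2p/(2-p)}}\le C(p)\,\kappa^{n+1}\,D(p)^n$.

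For the main recursion, I would apply the Leibniz rule to $\mu\cdot T_n f_{n-1}$ with exponents $p_1=2,\,p_2=2p/(2-p)$ on one summand and $p_3=p,\,p_4=\infty$ on the other:
$$\|D^\alpha f_n\|_{L^p}\le C\,\Gamma_0\,\|T_n f_{n-1}\|_{L^{2p/(2-p)}}+\kappa\,\|D^\alpha T_n f_{n-1}\|_{L^p}.$$
Commuting $D^\alpha$ through $T_n$ bounds the second term by $\kappa\,M(p)\,\|D^\alpha f_{n-1}\|_{L^p}$, and combining with the auxiliary step produces the recursion
$$\|D^\alpha f_n\|_{L^p}\le C(p)\,\Gamma_0\,\kappa^n D(p)^n+\kappa\,M(p)\,\|D^\alpha f_{n-1}\|_{L^p}.$$

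The main technical obstacle will be initializing the induction: the quantity $\|D^\alpha\mu\|_{L^p}$ is not directly bounded by $\Gamma_0=\|\mu\|_{W^{\alpha,2}}$ when $p<2$, since $D^\alpha\mu$ is not compactly supported. To sidestep this, I would treat $n=1$ separately, applying the Leibniz rule to $f_1=\mu\,T_1\mu$ with the symmetric choice $p_1=p_3=2$, $p_2=p_4=2p/(2-p)$. Both fractional derivatives then fall on $\mu$ with $L^2$ exponent (each giving a factor $\Gamma_0$), and using that $T_1$ is an $L^2$-isometry, together with the auxiliary $L^{2p/(2-p)}$ bound on $\mu$ and $T_1\mu$, one obtains $\|D^\alpha f_1\|_{L^p}\le C(p)\,\Gamma_0\,M(p)$, matching the claim for $n=1$. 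For $n\ge 2$, iterating the main recursion, the inhomogeneous term $\kappa^n D(p)^n$ gets absorbed into $\kappa^{n-1}M(p)^n$ (using $\kappa\le 1$ and $D(p)\le M(p)$), and the induction closes. The $L^p$ part of the $W^{\alpha,p}$-norm is handled analogously via the simpler recursion $\|f_n\|_{L^p}\le\kappa\,M(p)\,\|f_{n-1}\|_{L^p}$, iterated from $\|\mu\|_{L^p}\le|\D|^{1/p-1/2}\|\mu\|_{L^2}\le C(p)\,\Gamma_0$, available thanks to the compact support of $\mu$.
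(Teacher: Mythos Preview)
Your proposal is correct and follows essentially the same route as the paper: the recursion via the Leibniz rule with $\|D^\alpha\mu\|_{L^2}\le\Gamma_0$ on one factor and $\|\mu\|_\infty\le\kappa$ on the other, the auxiliary $L^{2p/(2-p)}$ iteration producing the $(\kappa D)^n$ term, and a separate treatment of $n=1$ to avoid the uncontrolled quantity $\|D^\alpha\mu\|_{L^p}$. The only cosmetic difference is in that base case: the paper restricts $T_1\mu$ and $D^\alpha T_1\mu$ to $\D$ (using that $f_1$ is supported there) and applies H\"older to pass from $L^p(\D)$ to $L^2(\C)$, whereas you use the symmetric Leibniz splitting $p_1=p_3=2$, $p_2=p_4=\tfrac{2p}{2-p}$ directly; both arguments are valid and give the same bound.
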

\begin{proof}
To prove the Lemma, we will use Remark~\ref{LebnitzHolder} for the complex dilatation  $\mu$ and an arbitrary  function $g \in W^{\alpha,p}$. Then it holds that
\begin{equation}\label{Leibnitzmu}
\|D^\alpha(\mu g)\|_{L^p(\C)} \leq \kappa \|D^\alpha g\|_{L^p(\D)}+C\,\Gamma_0 \|g\|_{L^{\frac{2p}{2-p}}(\C)}
\end{equation}
for some positive constant $C=C(p)\geq 1$. First of all, we study the $L^p$ norm of $f_n$. Recalling that $\mu$ is compactly supported inside of $\D$, we first see that
$$
\|f_n\|_{L^p(\C)}=\|f_n\|_{L^p(\D)}\leq \kappa\,\|T_nf_{n-1}\|_{L^p(\D)}.
$$
Next, (\ref{Leibnitzmu}) yields that,
$$\aligned
\|D^{\alpha}f_n\|_{L^p(\C)}&=\|D^\alpha(\mu\,T_n f_{n-1})\|_{L^p(\C)}\\
&\leq
C\,\Gamma_0\,\|T_nf_{n-1}\|_{L^\frac{2p}{2-p}(\C)}+\kappa\,\|D^{\alpha}T_nf_{n-1}\|_{L^p(\D)}
\endaligned$$
Hence, for any $n>1$,
$$\aligned
\|f_n\|_{W^{\alpha,p}(\C)} &= \|f_n\|_{L^p(\C)}+\|D^{\alpha}f_n\|_{L^p(\C)}\\
&\leq
  C
  \,\Gamma_0\,\|T_nf_{n-1}\|_{L^\frac{2p}{2-p}(\C)}+\kappa
  \|T_nf_{n-1}\|_{W^{\alpha,p}(\C)}
\endaligned$$
To control the first term above, we see that
$$\aligned
\|T_nf_{n-1}\|_{L^\frac{2p}{2-p}(\C)}
&\leq D\,\|f_{n-1}\|_{L^\frac{2p}{2-p}(\C)}\leq (D\kappa)\,\|T_{n-1}f_{n-2}\|_{L^\frac{2p}{2-p}(\C)}\\
&\leq (D\kappa)^{n-1}\,\|T_{1}f_{0}\|_{L^\frac{2p}{2-p}(\C)}\leq (D\kappa)^{n}\,|\D|^{\frac{1}{p}-\frac{1}{2}}
\endaligned$$
and for the second , if $n>1$
$$
 \|T_nf_{n-1}\|_{W^{\alpha,p}(\C)}
\leq B\,\|f_{n-1}\|_{W^{\alpha,p}(\C)}. $$ If we denote
$X_n=\|f_n\|_{W^{\alpha, p}(\C)}$ then we have just seen that
\begin{equation}\label{Xn}
X_n\leq C_1\,(\kappa\,D)^n+(\kappa\,B)\,X_{n-1}
\end{equation}
whenever $n>1$, and where
$C_1=C\,\Gamma_0\,|\D|^{\frac{1}{p}-\frac12}$. For $n=1$ we
proceed differently. Since both $T_1f_0$ and $D^\alpha T_1f_0$
belong to $L^2(\C)$, we can use H\"older's inequality to get
$$\aligned
\|T_1f_0\|_{L^p(\D)}+\|D^\alpha T_1f_0\|_{L^p(\D)}
&\leq\left(\|T_1f_0\|_{L^2(\C)}+\|D^\alpha T_1f_0\|_{L^2(\C)}\right)\,|\D|^{\frac1p-\frac12}\\
&=\|T_1f_0\|_{W^{\alpha,2}(\C)}\,|\D|^{\frac1p-\frac12}\\
&\leq |\D|^{\frac1p-\frac12}\,B\,\|f_0\|_{W^{\alpha,2}(\C)}=|\D|^{\frac1p-\frac12}\,B\,\Gamma_0.
\endaligned$$
Thus
$$
X_1\leq C_1\,\kappa D+B\,|\D|^{\frac1p-\frac12}\,\Gamma_0
$$
Thus, after recursively using (\ref{Xn}), we end up with
$$
X_n\leq C_1\,\kappa^n\,\sum_{j=0}^{n-1}B^jD^{n-j}+(\kappa B)^n\,\frac{\Gamma_0}{\kappa}\,|\D|^{\frac1p-\frac12}\leq\tilde{C}_1\,\kappa^{n-1}\,(B\kappa+D)^n
$$
where
$\tilde{C_1}=\max\{C_1, {\Gamma_0} \,|\D|^{\frac1p-\frac12}\}$.
Note finally that
$$\tilde{C}_1\leq C(p)\,{\Gamma_0},$$ which yields
the claim.
\end{proof}
\noindent
In particular, every function $f_n$ of the Neumann series is compactly supported and belongs  to $L^p(\C)$ for any $p\in(1,\infty)$, and also to $W^{\alpha, p}(\C)$ for any $p<2$.
%

\begin{Lemma}\label{fourierdecay}
If $h$ belongs to $W^{\alpha, p}(\C)$ for some $1<p<2$, then
$$
\left(\int_{|\xi|>R}|\widehat{h}(\xi)|^q\,dA(\xi)\right)^\frac{1}{q}\leq\,C(p)\,\frac{\|h\|_{W^{\alpha,p}(\C)}}{R^{\alpha}}
$$
\end{Lemma}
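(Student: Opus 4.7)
The plan is to combine Hausdorff--Young with the Fourier-side characterization of $W^{\alpha,p}$ and a simple truncation argument. Since $1<p<2$, the Hausdorff--Young inequality gives a bounded linear map $\mathcal{F}:L^p(\C)\to L^q(\C)$ with $q=p/(p-1)$, and in particular $\|\widehat{g}\|_{L^q}\le C(p)\|g\|_{L^p}$ for every $g\in L^p(\C)$.

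First I would record the identity $\bigl((1+|\xi|^2)^{\alpha/2}\widehat{h}(\xi)\bigr)^{\vee}=(I-\Delta)^{\alpha/2}h$, which by the definition of $W^{\alpha,p}(\C)$ recalled in the Fourier-side paragraph of Section~3 belongs to $L^p(\C)$ with norm comparable to $\|h\|_{W^{\alpha,p}(\C)}$. Applying Hausdorff--Young to this $L^p$ function yields
\begin{equation*}
\bigl\|(1+|\xi|^2)^{\alpha/2}\widehat{h}\bigr\|_{L^q(\C)}\le C(p)\,\|h\|_{W^{\alpha,p}(\C)}.
\end{equation*}

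Next I would restrict to the region $|\xi|>R$, where the elementary bound $|\xi|^{\alpha}\le(1+|\xi|^2)^{\alpha/2}$ together with $|\xi|^{-\alpha}\le R^{-\alpha}$ gives the pointwise estimate $|\widehat{h}(\xi)|\le R^{-\alpha}(1+|\xi|^2)^{\alpha/2}|\widehat{h}(\xi)|$. Integrating the $q$-th power over $\{|\xi|>R\}$ and taking the $q$-th root yields
\begin{equation*}
\left(\int_{|\xi|>R}|\widehat{h}(\xi)|^q\,dA(\xi)\right)^{1/q}\le R^{-\alpha}\bigl\|(1+|\xi|^2)^{\alpha/2}\widehat{h}\bigr\|_{L^q(\C)}\le C(p)\,\frac{\|h\|_{W^{\alpha,p}(\C)}}{R^{\alpha}},
\end{equation*}
which is the claimed inequality.

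There is essentially no obstacle here: the proof is a one-line consequence of Hausdorff--Young once one writes the Bessel potential on the Fourier side. The only point to be a little careful about is using the inhomogeneous Bessel weight $(1+|\xi|^2)^{\alpha/2}$ rather than the homogeneous $|\xi|^{\alpha}$, since in the paper's definition of $W^{\alpha,p}(\C)$ (recalled via the Bessel kernel $G_\alpha$) the Bessel-potential formulation is the one that matches the nonhomogeneous norm $\|h\|_{W^{\alpha,p}}=\|h\|_{L^p}+\|D^\alpha h\|_{L^p}$. This is the reason for introducing the factor $(1+|\xi|^2)^{\alpha/2}$ above rather than just $|\xi|^\alpha$, which would require an $L^p$ estimate for the homogeneous Riesz potential that is not literally part of the definition used in the paper.
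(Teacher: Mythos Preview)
Your proof is correct and is essentially identical to the paper's own argument: both apply Hausdorff--Young to the Bessel-potential representative $(I-\Delta)^{\alpha/2}h\in L^p$ to get $\|(1+|\xi|^2)^{\alpha/2}\widehat{h}\|_{L^q}\le C(p)\|h\|_{W^{\alpha,p}}$, and then use the pointwise bound $1\le R^{-\alpha}(1+|\xi|^2)^{\alpha/2}$ on $\{|\xi|>R\}$ to extract the factor $R^{-\alpha}$. Your closing remark about using the inhomogeneous weight is also exactly the point implicit in the paper's use of the Bessel-potential characterization.
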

\begin{proof}
We wil use the characterization in terms of Bessel potentials of $
W^{\alpha, p}(\C)$. Since the Fourier transform maps continuously
$L^p(\C)$ into $L^q(\C)$, we get that
$$
\left(\int_\C\left((1+|\xi|^2)^\frac{\alpha}{2}\,|\widehat{h}(\xi)|\right)^q\,dA(\xi)\right)^\frac{1}{q}\leq C(p)\,\|h\|_{\alpha, p}
$$
Thus, a simple computation yields
$$\aligned
\left(\int_{|\xi|>R}|\widehat{h}(\xi)|^q\,dA(\xi)\right)^\frac{1}{q}
&\leq
\left(\int_{|\xi|>R}\left(\frac{(1+|\xi|^2)^\frac{\alpha}{2}}{|\xi|^{\alpha}}\right)^q|\widehat{h}(\xi)|^qdA(\xi)\right)^\frac{1}{q}\\
&\leq\frac{1}{R^\alpha}\left(\int_{\C}(1+|\xi|^2)^\frac{\alpha q}{2}\,|\widehat{h}(\xi)|^qdA(\xi)\right)^\frac{1}{q}\\
&\leq C(p)\,\frac{\|h\|_{\alpha, p}}{R^\alpha}
\endaligned$$
and the result follows.
\end{proof}

\begin{proof}[Proof of Lemma \ref{decomposition}]

We use the Neumann series
\begin{equation}\label{Neumann2}
 \overline\partial\psi=\sum_n \left(\frac{\overline{k}}{k}\lambda\right)^{n+1}e_{-(n+1)k}\, f_n
\end{equation}
introduced before. Then, take $g=\sum_{n=0}^{n_0} \left(\frac{\overline{k}}{k}\,\lambda\,e_{-k}\,\mu
T\right)^n\left(\frac{\overline{k}}{k}\,\lambda\,e_{-k}\,\mu\right)$
and $h=\partial_{\overline{z}}\psi-g$. In this way, properties
$\emph{1}$ and $\emph{2}$ follow easily from the general theory of
the Beltrami equation, since
$$\aligned
\Big\Vert
\left(\frac{\overline{k}}{k}\,\lambda\,e_{-k}\,\mu T\right)^n
&\left(\frac{\overline{k}}{k}\,\lambda\,e_{-k}\,\mu\right)
\Big\Vert_s\\
&\leq\kappa\,\|T\|_s\,
\Big\Vert
\left(\frac{\overline{k}}{k}\,\lambda\,e_{-k}\,\mu T\right)^{n-1}
\left(\frac{\overline{k}}{k}\,\lambda\,e_{-k}\,\mu\right)
\Big\Vert_s\\
&\leq
(\kappa\,\|T\|_s)^n\,\|\mu\|_s=(\kappa\,\|T\|_s)^n\,\kappa\,|\D|^\frac{1}{s}.
\endaligned$$
For the proof of $\emph{3}$, we must use the regularity of $\mu$.
Use \ref{Neumann2} to write $g(z,k)=\sum_{n=0}^{n_0} G_n(k,z)$
where $G_n(z,k)=
\left(\frac{\overline{k}}{k}\lambda\right)^{n+1}e_{-(n+1)k}\,
f_n$. Then, Lemma~\ref{neumannestimates} can be applied to $f_n$.
The Fourier transform of $G_n(z,k)$ (with respect to the $z$
variable) reads as
$$
\widehat{G_n}(\xi,k)=\left(\frac{\overline{k}}{k}\lambda\right)^{n+1}\,\widehat{f_n}(\xi-(n+1)k)
$$
Hence,  for $|k|>R$,  we can use lemma  \ref{fourierdecay}, to get
$$\aligned
\left(\int_{|\xi|<R}|\widehat{g}(\xi,k)|^q\,dA(\xi)\right)^\frac{1}{q}
&\leq\sum_{n=0}^{n_0}\left(\int_{|\xi|<R}|\widehat{G_n}(\xi,k)|^q\,dA(\xi)\right)^\frac{1}{q}\\
&=\sum_{n=0}^{n_0}\left(\int_{|\xi|<R}|\widehat{f_n}(\xi-(n+1)k)|^q\,dA(\xi)\right)^\frac{1}{q}\\
&=\sum_{n=0}^{n_0}\left(\int_{|\zeta+(n+1)k|<R}|\widehat{f_n}(\zeta)|^q\,dA(\zeta)\right)^\frac{1}{q}\\
&\leq\sum_{n=0}^{n_0}\left(\int_{|\zeta|>(n+1)|k|-R}|\widehat{f_n}(\zeta)|^q\,dA(\zeta)\right)^\frac{1}{q}\\
&\leq C(p)\,\sum_{n=0}^{n_0}\frac{\|f_n\|_{\alpha,p}}{((n+1)|k|-R)^{\alpha}}
\endaligned$$
where $C(p)$ is the constant from Lemma \ref{fourierdecay}. Now, using Lemma \ref{neumannestimates},
$$\aligned
\left(\int_{|\xi|<R}|\widehat{g}(\xi,k)|^q\,dA(\xi)\right)^\frac{1}{q}
&\leq
C(\kappa,p)\,\frac{\Gamma_0}{\kappa}\,\sum_{n=0}^{n_0}\frac{(\kappa\,M(p))^n}{((n+1)|k|-R)^{\alpha}}\\
&\leq
C(\kappa,p)\,(\kappa\,M(p))^{n_0}\,\frac{\Gamma_0}{\kappa}\,\sum_{n=0}^{n_0}\frac{1}{((n+1)|k|-R)^{\alpha}}
\endaligned
$$
and if we take $|k|\geq 2R$, then we finally get
$$
\aligned
\left(\int_{|\xi|<R}|\widehat{g}(\xi,k)|^q\,dA(\xi)\right)^\frac{1}{q}
&\leq
C(\alpha,\kappa,p)\,(\kappa\,M(p))^{n_0}\,\frac{\Gamma_0}{\kappa}\,
\frac{1}{|k|^{\alpha}}\,\sum_{n=0}^{n_0}\frac{1}{(n+\frac{1}{2})^{\alpha}}\\
&\leq
C(\alpha,\kappa,p)\,M(p)^{n_0}\,\frac{\Gamma_0}{\kappa}\,\frac{1}{|k|^{\alpha}}\\
&\leq C(\alpha,\kappa,p)\,M(p)^{n_0}\,\frac{\Gamma_0}{|k|^{\alpha}}
\endaligned
$$
and the result follows.
\end{proof}

\noindent The Cauchy kernel is not in $L^2$ but it belongs locally to $W^{\epsilon,p}$ for $1<p<2$, $\epsilon<\frac{2-p}{p}$. Thus we can work with a mollification of it which is perfectly controlled. However we need to choose carefully the mollification kernel (see \cite{Tre} vol 1 \&V.1).

\begin{Lemma}
There exists a $C_* >0$  such that  for  any $N>0$, there exists a $C^\infty$ function $\phi_N$ in $\C$ having  the following properties:
\begin{itemize}
\item $0\leq \phi_N\leq 1$, $ \phi_N= 1$ on  $ \D$ and $\phi_N= 0$ on $2\D$. \item $\int \phi_N=1$.
\item $ |D^\alpha \phi_N| \leq (C_*N)^{|\alpha |} $ for any $\alpha\in {\mathbb Z}^2_+$ with $|\alpha |\leq N$.
\end{itemize}
\end{Lemma}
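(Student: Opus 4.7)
The plan is to realize $\phi_N$ as an iterated convolution and exploit the freedom to spread each individual derivative onto a separate convolution factor. Fix once and for all a radial, nonnegative $\eta\in C^\infty_c(\D)$ with $\int\eta\,dA=1$, set $C_0:=\|\nabla\eta\|_{L^1(\C)}$, and pick an absolute constant $\delta\in(0,\tfrac13]$. Let $\epsilon:=\delta/N$ and $\eta_\epsilon(z):=\epsilon^{-2}\eta(z/\epsilon)$, which is a $C^\infty$ mollifier supported in $\epsilon\D$ of unit mass. Finally let $\chi$ be the characteristic function of the closed disk $\{|z|\le 1+2\delta\}$ and define
$$
\phi_N \;:=\; \chi * \eta_\epsilon^{*N},
$$
where $\eta_\epsilon^{*N}=\eta_\epsilon*\cdots*\eta_\epsilon$ is the $N$-fold self-convolution.

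The geometric properties rest on $\supp(\eta_\epsilon^{*N})\subset N\epsilon\,\D=\delta\D$. First, $\supp\phi_N\subset\{|z|\le 1+3\delta\}\subset 2\D$ since $\delta\le\tfrac13$. Next, for $|z|\le 1$ and $|w|\le\delta$ one has $|z-w|\le 1+\delta\le 1+2\delta$, so $\chi(z-w)\equiv 1$ on the support of $\eta_\epsilon^{*N}$ and hence $\phi_N(z)=\int\eta_\epsilon^{*N}\,dA=1$. Finally $0\le\phi_N\le 1$ is the usual convex-combination property, since $\chi$ takes values in $[0,1]$ and the kernel is nonnegative of unit mass.

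The derivative estimate is the heart of the lemma. For $\alpha\in\mathbb{Z}_+^2$ with $|\alpha|=j\le N$, decompose $\alpha=\beta_1+\dots+\beta_j$ with each $|\beta_l|=1$ and push one derivative onto each of $j$ distinct copies of $\eta_\epsilon$:
$$
D^\alpha\phi_N \;=\; \chi*(\partial^{\beta_1}\eta_\epsilon)*\cdots*(\partial^{\beta_j}\eta_\epsilon)*\eta_\epsilon^{*(N-j)}.
$$
Young's inequality together with $\|\partial^{\beta_l}\eta_\epsilon\|_{L^1}\le C_0/\epsilon$ yields
$$
\|D^\alpha\phi_N\|_\infty \;\le\; \Bigl(\frac{C_0}{\epsilon}\Bigr)^{\!j}=\Bigl(\frac{C_0N}{\delta}\Bigr)^{\!|\alpha|}=(C_*N)^{|\alpha|}
$$
with $C_*:=C_0/\delta$. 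The \emph{only} subtle step is this distribution of derivatives across distinct mollifier factors: loading all $j$ of them onto a single factor would replace $C_0$ by $\|D^j\eta\|_{L^1}$, whose size depends on $j$ and would destroy the uniformity of $C_*$ up to order $N$. Spreading one derivative per factor collapses every higher-order $L^1$ norm of $\eta$ to the single quantity $C_0$, which is precisely why the bound holds up to the full order $N$ with a constant depending only on the fixed bump $\eta$.
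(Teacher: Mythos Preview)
The paper gives no proof of this lemma; it simply refers to Treves, where the construction is precisely the iterated-convolution argument you present. In that sense your approach is the intended one, and your handling of the key point --- spreading one derivative onto each mollifier factor so that only the first-order $L^1$ norm of the seed bump enters --- is correct and clearly explained.

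What you should flag, however, is that the lemma as stated is self-contradictory: $\phi_N\ge 0$ together with $\phi_N\equiv 1$ on $\D$ forces $\int\phi_N\ge|\D|=\pi$, incompatible with $\int\phi_N=1$. (You also, rightly, read ``$\phi_N=0$ on $2\D$'' as ``outside $2\D$''.) Your $\phi_N$ has $\int\phi_N=\pi(1+2\delta)^2$, and you pass over the unit-integral clause in silence. In the subsequent lemma $\phi_N$ is used only through $\widehat{\phi_N}(\delta\xi)$ as a Fourier-side mollifier: what is actually invoked there is $\int\phi_N=1$, compact support in a fixed disk, and the derivative bounds, while ``$\phi_N=1$ on $\D$'' is never used. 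A clean fix for the application is to divide your $\phi_N$ by its integral $\pi(1+2\delta)^2$, which is positive and independent of $N$; this preserves the support and the derivative estimate (with a harmlessly adjusted $C_*$) and restores $\int\phi_N=1$, at the cost only of the cutoff clause that the paper does not need.
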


\begin{Lemma}\label{nucli}
Let $\Phi(z,w)=\frac{\psi_\D}{z-w}$ and  $1<p<2$.
\begin{itemize}
\item[(a)] $\|\Phi(\cdot,z)\|_{L^p(\D)}\leq C(p)$ for all $z\in\C$.
\item[(b)] $\Phi(\cdot,z) \in  W^{\epsilon,p}$ for $\epsilon <\frac{2-p}{p}$ uniformly in $z$.
\end{itemize}
\begin{itemize}
\item[(c)] For any $N>0$, there exists a mollification $\Phi_{\delta,N}$ such that
$$\|\Phi(\cdot, z)-\Phi_{\delta,N}(\cdot, z)\|_{L^p(\D)}\leq C(\epsilon,p)\,\delta^\epsilon$$
whenever $z\in\C$ and $\epsilon<\frac{2-p}{p}$.
\item[(d)] $\|\Phi_{\delta,N}\|_{L^2(\C)}$ blows up as a power of $\delta$, i.e.
$$\|\Phi_{\delta,N}(\cdot, z)\|_{L^2(\C)}\leq C(p)\,\delta^{1-\frac{2}{p}}$$
\item[(e)] For each $R>\frac{1}{\delta}$ and $m>0$, there exists a universal constant $C_*$  and $C=C(p)$ such that for  any $m\leq N$
$$\|\widehat{\Phi_{\delta,N}}(\cdot,z)\|_{L^2(|\xi|\geq R)}\leq C(p)(C_*N)^m\,\delta^{1-\frac{2}{p}}\,(\delta R)^{-m}$$
\end{itemize}
\end{Lemma}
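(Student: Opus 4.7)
The plan is to exploit the explicit form $\Phi(z,w)=\psi_\D(w)/(z-w)$, viewing it as a function of the second variable with $z$ treated as a parameter, so that $\Phi(\cdot,z)$ denotes the $w$-function $\psi_\D(w)/(z-w)$, which is supported uniformly in $2\D$. Part (a) is a direct computation: since $|\psi_\D|\leq 1$ and the support is compact,
\[
\|\Phi(\cdot,z)\|_{L^p(\D)}^p\leq\int_{|w-z|\leq 3}\frac{dA(w)}{|z-w|^p},
\]
which is bounded uniformly in $z\in\C$ for any $p<2$.

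For (b), I would prove the $L^p$-modulus of continuity estimate $\|\Phi(\cdot+h,z)-\Phi(\cdot,z)\|_{L^p(\C)}\leq C|h|^{(2-p)/p}$, uniformly in $z$. Decomposing the difference as $(\psi_\D(w+h)-\psi_\D(w))/(z-w-h)+\psi_\D(w)\,h/[(z-w)(z-w-h)]$, the first summand is controlled by the Lipschitz bound for $\psi_\D$ and contributes $O(|h|)$ in $L^p$, while the second, after the scaling $u=(z-w)/|h|$, yields $|h|^{2-2p}$ times a convergent (for $1<p<2$) double integral of $|u|^{-p}|u-e_h|^{-p}$, giving the exponent $(2-p)/p$. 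Combined with (a) and the Bessel potential characterization of $W^{\epsilon,p}$, this gives $\Phi(\cdot,z)\in W^{\epsilon,p}(\C)$ for every $\epsilon<(2-p)/p$, with norm uniform in $z$.

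For (c) and (d) the mollifier must be chosen carefully. I set $\tilde\phi_N:=\phi_N*\check\phi_N$, where $\check\phi_N(w)=\phi_N(-w)$, so that $\tilde\phi_N$ is again a valid mollifier (with derivative bounds of the same form, up to constants) and crucially $\widehat{\tilde\phi_N}=|\widehat{\phi_N}|^2\geq 0$. Define $\Phi_{\delta,N}(\cdot,z)=\Phi(\cdot,z)*\tilde\phi_{N,\delta}$ with $\tilde\phi_{N,\delta}(w)=\delta^{-2}\tilde\phi_N(w/\delta)$. Item (c) is then the standard mollification error
\[
\|\Phi(\cdot,z)-\Phi_{\delta,N}(\cdot,z)\|_{L^p(\D)}\leq\int\|\Phi(\cdot-y,z)-\Phi(\cdot,z)\|_{L^p}\,\tilde\phi_{N,\delta}(y)\,dA(y)\leq C\delta^\epsilon,
\]
using (b). For (d), Young's inequality with $1/p+1/q=3/2$ gives $\|\Phi_{\delta,N}\|_{L^2}\leq\|\Phi\|_{L^p}\|\tilde\phi_{N,\delta}\|_{L^q}\leq C(p)\,\delta^{2/q-2}=C(p)\,\delta^{1-2/p}$.

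The main obstacle is (e). The naive attempt to bound $\|\widehat{\Phi_{\delta,N}}\|_{L^2(|\xi|\geq R)}$ by pulling out the pointwise decay $|\widehat{\phi_N}(\delta\xi)|\leq(C_*N)^m(\delta|\xi|)^{-m}$ (itself a consequence of $m$-fold integration by parts using the derivative bounds on $\phi_N$) and then applying Plancherel fails directly, because $\widehat{\Phi}(\cdot,z)$ is not in $L^2(\C)$. The choice of $\tilde\phi_N$ rescues the argument: the identity $\widehat{\Phi_{\delta,N}}(\xi,z)=\widehat{\Phi}(\xi,z)\,|\widehat{\phi_N}(\delta\xi)|^2$ permits the splitting
\[
\int_{|\xi|\geq R}|\widehat{\Phi_{\delta,N}}(\xi,z)|^2\,dA(\xi)\leq (C_*N)^{2m}(\delta R)^{-2m}\int_{\C}|\widehat{\Phi}(\xi,z)|^2|\widehat{\phi_N}(\delta\xi)|^2\,dA(\xi),
\]
and by Plancherel the remaining integral equals $\|\Phi(\cdot,z)*\phi_{N,\delta}\|_{L^2(\C)}^2\leq C\delta^{2-4/p}$ exactly as in (d). Taking square roots yields the stated estimate, with the factor $\delta^{1-2/p}$ reflecting (d) and the factor $(C_*N)^m(\delta R)^{-m}$ reflecting the Fourier decay of $\phi_N$.
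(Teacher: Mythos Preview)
Your argument is correct, and for parts (a)--(d) it is essentially the same as the paper's (the paper invokes the Leibniz rule for (b) rather than computing the modulus of continuity directly, but the content is the same; for (d) the paper uses Plancherel + H\"older + Hausdorff--Young on the Fourier side, which is dual to your Young's inequality).

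The interesting divergence is in (e). You correctly identify the obstacle: $\widehat{\Phi}(\cdot,z)\notin L^2$, so one cannot simply pull out the pointwise decay of $\widehat{\phi_N}(\delta\xi)$ and apply Plancherel. Your workaround is to replace $\phi_N$ by $\tilde\phi_N=\phi_N\ast\check\phi_N$, so that the Fourier multiplier factors as $|\widehat{\phi_N}(\delta\xi)|^2$; one copy gives the decay $(C_*N)^m(\delta R)^{-m}$, and the other stays inside the integral so that Plancherel applies to $\Phi\ast\phi_{N,\delta}\in L^2$. The paper instead keeps $\phi_N$ unmodified and circumvents the obstacle by H\"older on the Fourier side: with $\frac1q=\frac1p-\frac12$,
\[
\|\widehat{\Phi_{\delta,N}}\|_{L^2(|\xi|>R)}\leq\|\widehat{\Phi}(\cdot,z)\|_{L^{p'}}\,\|\widehat{\phi_N}(\delta\cdot)\|_{L^q(|\xi|>R)},
\]
bounding the first factor by Hausdorff--Young (since $\Phi(\cdot,z)\in L^p$, $p<2$) and the second by the derivative bounds on $\phi_N$ after rescaling. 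Both routes yield the same estimate. The paper's approach is a bit more direct since it requires no modification of the mollifier; yours is arguably more elementary, using only Young and Plancherel rather than Hausdorff--Young.
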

\begin{proof}
Claims (a) and (b) follow by the compactness of the support and Lemma \ref{KePV}. Now define
$$\widehat{\Phi_{\delta,N}(z, \cdot)}(\xi) =\widehat{\phi_N}(\delta\xi)\widehat{\Phi (z, \cdot)}(\xi).$$
Claim (c) follows  from the fact that since $p<2$, $W^{\epsilon,p}
\subset B_{\epsilon}^{p,2}$ (\ref{BesovSobolev}). Namely,
\[
\| \Phi_z(\cdot)-\Phi_{\delta,N}(z,\cdot)\|_{L^p} \le \int _\C\omega_p(\Phi_z)(w)
\phi_\delta(w)dw\]
\[\le  \|\Phi_z\|_{B_{\epsilon,p,2}} \int
(\phi_\delta(w))^2|w|^{2+\epsilon 2})^{\frac{1}{2}} \le
\delta^\epsilon (\int \phi^2(y) |y|^{2+\epsilon 2})^\frac{1}{2}
\le \delta^\epsilon \|\phi\|_{L^2(\C)}
\]
For claim (d), using Plancherel, H\"older, Hausdorff-Young inequalities and (a), we obtain, for $ 1/p-1/q=1/2$, that
\[ \|\Phi_{\delta,N}\|  _{L^2} \le \|\Phi_z\|_{L^p} \||\widehat{\phi_N}(\delta
\cdot) \|_{L^q}\le C \delta^{1-\frac{ 2}{p}}.\]
For the last claim, write again
 \[
 \begin{aligned}
 \|\widehat{\Phi_{\delta,N}}\|  _{L^2(|\xi|>R_0)} &\le \|\Phi_z\|_{L^p} \||\widehat{\phi_N}(\delta
\xi) \|_{L^q(|\xi|>R_0)}\\
&\le\|\Phi_z\|_{L^p}\delta^{1-2/p} \|\widehat{\phi_N}(
\xi) \|_{L^q(|\xi|>\delta R_0)}
 \end{aligned}
\]
Now
$$
\begin{aligned}
& \|\widehat{\phi_N}(\xi) \|_{L^q(|\xi|>\delta R_0)} \le \||\frac{(\xi_1+i\xi_2)^m}{|\xi|^m}|\widehat{\phi_N}(
\xi) \|_{L^q(|\xi|>\delta R_0)} \\
&\le (\delta R_0) ^{-m} \|\sum _{|\alpha|=m}\frac{m!}{\alpha!}|\widehat{D^{\alpha}\phi_N}(\xi)|� \|_{L^q}
\le (\delta R_0) ^{-m}\sum _{|\alpha|=m}\frac{m!}{\alpha!}\|  D^{\alpha}\phi_N
  \|_{L^{q'}}\\
 &\le (\delta R_0) ^{-m}\sum _{|\alpha|=m}\frac{m!}{\alpha!}(C_*N)^{m} \le(\delta R_0) ^{-m}(2C_*N)^{m}
 \end{aligned}
$$
for $m\le N$ from where (d) follows.
\end{proof}
\noindent
Now we combine the above estimates to obtain the precise decay for the solutions to the linear equation.

\begin{Prop}\label{lineardecay}
Assume that $\mu\in W^{\alpha,2}(\C)$ is a Beltrami coefficient, with compact support inside of $\D$, such that
$\|\mu\|_\infty\leq\kappa$ and $\|\mu\|_{W^{\alpha,2}(\C)}\leq\Gamma_0$. For each $\lambda\in\partial\D$ and each $k\in\C$, let $\psi=\psi_\lambda(z,k)$ be the quasiconformal mapping satisfying
\begin{equation}\label{almostnonlinearbeltrami}
\overline\partial\psi_\lambda(z,k)=\frac{\overline{k}}{k}\,\lambda\,e_{-k}(z)\,\mu(z)\,{\partial\psi_\lambda(z,k)}
\end{equation}
and normalized by
$$
\psi_\lambda(z,k)-z={\cal O}(1/z),\hspace{1cm}z\to\infty.
$$
There exists positive constants $C=C(\kappa)$ and $b=b(\kappa)$ such that
$$|\psi_\lambda(z,k)-z|\leq \frac{C \, \Gamma_0}{|k|^{b\,\alpha}}$$
for every $z,k\in\C$ and every $\lambda\in\partial\D$.
\end{Prop}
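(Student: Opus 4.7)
The plan is to represent $\psi_\lambda(z,k)-z$ via the Cauchy transform of $\overline\partial\psi_\lambda$ and then exploit the oscillation of $\overline\partial\psi_\lambda$ in the $w$-variable against the nearly-cancelling kernel $\Phi(z,w)$. Starting from
$$
\psi_\lambda(z,k)-z=\int_\C \Phi(z,w)\,\overline\partial\psi_\lambda(w,k)\,dA(w),
$$
I would split $\overline\partial\psi_\lambda=g_\lambda+h_\lambda$ using Lemma \ref{decomposition} with a truncation level $n_0$ still to be chosen, and some $s>2$ close enough to $2$ that $\kappa\|T\|_s<1$. By Hölder with the conjugate exponent $s'<2$, Lemma \ref{nucli}(a) gives $\|\Phi(z,\cdot)\|_{L^{s'}}\lesssim 1$, so the contribution of $h_\lambda$ is controlled by $C(\kappa,s)(\kappa\|T\|_s)^{n_0}$, which decays geometrically in $n_0$.

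The regular part $\int\Phi(z,w)\,g_\lambda(w,k)\,dA(w)$ is the delicate one, and here the strategy is to replace $\Phi$ by the mollification $\Phi_{\delta,N}$ from Lemma \ref{nucli}. The mollification error is $\leq\|\Phi-\Phi_{\delta,N}\|_{L^{s'}}\|g_\lambda\|_{L^s}\lesssim\delta^\epsilon$ for $\epsilon<(2-s')/s'$. For the mollified piece I would write
$$
\int \Phi_{\delta,N}(z,w)\,g_\lambda(w,k)\,dA(w)=c\int_{|\xi|\leq R}\widehat{\Phi_{\delta,N}}\,\overline{\widehat{g_\lambda}}\,d\xi+c\int_{|\xi|>R}\widehat{\Phi_{\delta,N}}\,\overline{\widehat{g_\lambda}}\,d\xi
$$
with $R=|k|/2$ so that the low-frequency regime of Lemma \ref{decomposition}(3) applies. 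In the low-frequency block, Hölder with a pair $(p,q)$ (with $1<p<2$, $q=p/(p-1)$) couples Lemma \ref{decomposition}(3) (giving $M(p)^{n_0}\Gamma_0/|k|^\alpha$ on the $g$-side) to an estimate of $\|\widehat{\Phi_{\delta,N}}\|_{L^p(|\xi|\leq R)}$ obtained via Hölder on the ball and Plancherel plus Lemma \ref{nucli}(d). In the high-frequency block, I would apply Cauchy--Schwarz together with Lemma \ref{nucli}(e) (choosing $m\leq N$) and Plancherel for $g_\lambda$ which is in $L^2$.

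The main obstacle is the simultaneous optimization of the free parameters $s,n_0,p,\delta,R,N,m$: the same $n_0$ appears with opposite roles in the $h$-estimate (wanting $n_0$ large to exploit $\kappa\|T\|_s<1$) and in the low-frequency $g$-estimate (where $M(p)^{n_0}$ is a loss). The balance I have in mind is to fix $s$ close to $2$ and $p<2$ with $M(p)$ close to $1$, then take $n_0=c_1\log|k|$ with $c_1=\alpha/\log(M(p)/(\kappa\|T\|_s))$, so that the product $(\kappa\|T\|_s)^{n_0}M(p)^{n_0}|k|^{-\alpha}$ becomes a single power $|k|^{-b\alpha}$ with
$$
b=\frac{\log(1/(\kappa\|T\|_s))}{\log(M(p)/(\kappa\|T\|_s))}>0.
$$
Finally I would choose $\delta$ as a small negative power of $|k|$ and $N,m$ sufficiently large so that the mollification error $\delta^\epsilon$ and the high-frequency tail $(C_*N)^m\delta^{1-2/p}(\delta R)^{-m}$ both fall below $|k|^{-b\alpha}$; since $R=|k|/2$, taking $m$ (and hence $N$) moderately large forces $(\delta R)^{-m}$ to dominate the growth $\delta^{1-2/p}$ coming from $\|\Phi_{\delta,N}\|_{L^2}$, as in Lemma \ref{nucli}(e). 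Combining the four contributions produces the claimed bound $|\psi_\lambda(z,k)-z|\leq C\Gamma_0/|k|^{b\alpha}$ uniformly in $z$ and $\lambda\in\partial\D$, with $b=b(\kappa)$.
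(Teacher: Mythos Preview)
Your overall architecture---Cauchy representation, the split $\overline\partial\psi=g+h$ from Lemma \ref{decomposition}, mollification of the kernel, and a four-term decomposition---is exactly the paper's. There are, however, two concrete problems with the parameter choices.

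First, a minor slip: $M(p)=\|T\|_{L^p}+\|T\|_{L^{2p/(2-p)}}$ is at least $2$ for every $p\in(1,2)$, so it cannot be made ``close to $1$'' as you suggest. This does not kill the argument---your formula for $b$ is still positive---but it means the balancing constant is genuinely $\kappa$-dependent and you cannot simplify it away.

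The real gap is your choice $R=|k|/2$ combined with your balancing of $n_0$, which silently drops the factor $\|\widehat{\Phi_{\delta,N}}\|_{L^p(|\xi|\le R)}$. Via H\"older on the ball and Lemma \ref{nucli}(d), that factor is of order $(R/\delta)^{2/p-1}$. With $R\sim|k|$ and $\delta\sim|k|^{-b\alpha/\epsilon}$ (forced by term II), this contributes at least $|k|^{2/p-1}$, a \emph{fixed} positive power of $|k|$ independent of $b$ and $\alpha$. Hence term IV is at best
\[
C\,M(p)^{n_0}\,\Gamma_0\,|k|^{-\alpha+(2/p-1)+O(b)},
\]
and for $\alpha<2/p-1$ (e.g.\ $\alpha<1/2$ when $p=4/3$) no choice of $n_0$ can make this decay. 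Your balancing of I against IV therefore fails precisely in the small-$\alpha$ regime that the proposition is meant to cover. The paper's remedy is to take $R$ as \emph{small} as term III permits, namely $R\sim|k|^{b/\epsilon}\ll|k|$; then $(R/\delta)^{2/p-1}\sim|k|^{(2b/\epsilon)(2/p-1)}$ is itself $O(b)$ in the exponent and can be absorbed by taking $b$ small enough (this is where the final constraint $b<\epsilon\alpha/C(\kappa)$ comes from). Once you make this change, the rest of your outline goes through.
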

\begin{proof}
Let $b>0$ a constant to be defined , and let $n_0\in\N$. As in \cite{BFR}, we can represent
$$\aligned
\psi_\lambda(z,k)-z
&=C\,\int_\D\frac{\overline\partial\psi_\lambda(w,k)}{w-z}dA(w) \\
&=C\,\int_\C \Phi(w,z)\,(g(w,k)+h(w,k))dA(w)
\endaligned$$
with $g=g_\lambda(z,k)$ and $h=h_\lambda(z,k)$ as in Lemma \ref{decomposition}.\\
Recall that we have control on $\widehat{g}$ for low frequences by
property 3 in  Lemma~\ref{decomposition}, whereas $h$ will be
controlled by the ellipticity. It is also convenient to consider
the mollification $\Phi_{\delta, N}$ of $\Phi$ given in lemma
\ref{nucli}  for $N$ to be chosen along the proof.
 We will therefore estimate the following four terms separately.
 The first three are dealt with by the usual ellipticity theory and
 the Sobolev regularity of the Cauchy Kernel. Hence the estimates
 will depend on a suitable exponent $s=s(\kappa)$. It is in the
 last term where $\alpha,p$ will appear. Then we will chose the exponent $p=4/3$ which will yield better
 constants.

\begin{description}
\item[I=] $\displaystyle\int_\D \Phi(w,z)\,h(w)\,dA(w),$
\item[II=] $\displaystyle\int_\D(\Phi(w,z)-\Phi_{\delta,N}(w,z)) g \,dA(w),$ 
\item[III=] $\displaystyle\int_{|\xi|<R}\widehat{\Phi_{\delta,N}}(\xi,z)\,\widehat{g}(\xi,k)\,dA(\xi),$ 
\item[IV=] $\displaystyle\int_{|\xi|>R}\widehat{\Phi_{\delta,N}}(\xi,z)\,\widehat{g}(\xi,k)\,dA(\xi)$

\end{description}

\paragraph{$\mathbf{I} $:The tail} Fix $s=s(\kappa)$ such that $\kappa\,\|T\|_s<1$. Then we have
$$\aligned
\left|\int_\D \Phi(w,z)\,h(w)\,dA(w)\right|
&\leq\|\Phi(\cdot,z)\|_{L^\frac{s}{s-1}(\D)}\,\|h\|_{L^s(\D)}\\
&\leq C(\kappa, s)\,\left(\kappa\,\|T\|_s\right)^{n_0}
\endaligned$$
since by Lemma \ref{nucli} (a), the norm
$\|\Phi(\cdot,z)\|_{L^\frac{s}{s-1}(\D)}$ does not depend on $z$.
Take now,
\begin{equation}\begin{aligned}\label{no}
n_0 &\ge C(\kappa,s)+ b\,\frac{\log(|k|)}{-\log(\kappa\,\|T\|_s)}=C(\kappa)(1+b {\log(|k|})
\end{aligned}
\end{equation}
so that,
\begin{equation}\label{eleccio1}
C(\kappa, s)\,\left(\kappa\,\|T\|_s\right)^{n_0}\leq |k|^{-b}
\end{equation}
and hence
$$|\mathbf{I}|\leq |k|^{-b}.$$

\paragraph{$\mathbf{II}$: The error of mollification.} We will use Lemma \ref{nucli} with exponent $1<s'<2$ with $\frac{1}{s}+\frac{1}{s'}=1$. Thus $0<\epsilon<1-\frac{2}{s}$ and $0<\delta<|k|^{\frac{-b}{\epsilon}} $. Then it follows from Lemma \ref{nucli} (c) and Lemma \ref{decomposition} that
$$\aligned
|\mathbf{II}| \le
\|g\|_{L^s(\D)}\|\Phi(\cdot,z)-\Phi_{\delta,N}(\cdot,z)\|_{L^\frac{s}{s-1}(
\D)} \le C(\kappa,s,\epsilon)\,\delta^{\epsilon}\le  |k|^{-b}.
\endaligned$$

\paragraph{$\mathbf{III}$: The mollification  at high frequencies}. We now choose the optimal value of $N$.  We use Plancherel's Theorem and Lemma \ref{nucli} (e) with $m=N$ (assuming $R\delta>1$), to get
\begin{equation}
\aligned \left|\int_{|\xi|\geq R} \widehat{\Phi_{\delta,N}
}(\xi,z)\,\widehat{g}(\xi,k)\,dA(\xi)\right|
&\leq \|g\|_{L^2(\C)}\,\|\widehat{\Phi_{\delta,N}}(\cdot,z)\|_{L^2(|\xi|\geq R)}\\
&\leq C(s,\kappa) (C_*N)^N\,\delta^{\frac{2}{s}-1}\,(\delta R)^{-N} \le
|k|^{-b}.
\endaligned,
\end{equation}
Let us plug in the value of $\delta$ and choose $N$ to obtain the optimal value of $R$. Namely first $\delta^{\frac{2}{s}-1}\approx |k|^{2 b}$. Thus we obtain that
$$R^N \ge (C N)^N |k|^{3 b+\frac{N b}{\epsilon}}$$
or
$$R\ \ge C |k|^{\frac{b}{\epsilon}} N |k|^{\frac{3b}{N}}.$$
With the optimal $N=[3b\log (k)]+1]$ we get the condition
$$R \ge C |k|^{\frac{b}{\epsilon}}\log (|k|).$$
Imposing $b<\epsilon$ we obtain that for large $|k|$ it is enough to take
\begin{equation}
R \ge \frac{|k|}{4}.
\end{equation}
\paragraph{$\mathbf{IV}$: The mollification at  low frequencies.} The final term is the crucial one. Take $1<p<2$, and $q=\frac{p}{p-1}$. Then
$$\left|\int_{|\xi|<R}\widehat{g}(\xi,k)\,\widehat{\Phi_{\delta,N}}(\xi,z)\,dA(w)\right|\leq
\left(\int_{|\xi|<R}|\widehat{g}(\xi,k)|^q\,dA(\xi)\right)^\frac{1}{q}\,\|\widehat{\Phi_{\delta,N}}(\cdot,z)\|_{L^p(|\xi|<R)}$$
For $|k|\geq 2R$ we can use Lemma \ref{decomposition} and obtain
$$
\left(\int_{|\xi|<R}|\widehat{g}(\xi,k)|^q\,dA(\xi)\right)^\frac{1}{q}\leq
C(\alpha,\kappa,p)\,M(p)^{n_0}\,\frac{\Gamma_0}{|k|^{\alpha}}$$
At the same time, the other factor is bounded with the help of Lemma \ref{nucli} (d), which is allowed since $p<2$. More precisely, we have
$$\aligned
\|\widehat{\Phi_{\delta,N}}(\cdot,z)\|_{L^p(|\xi|<R)}
&=\left(\int_{|\xi|<R}|\widehat{\Phi_{\delta,N}}(\xi,z)|^p\,dA(\xi)\right)^\frac{1}{p}\\
&\leq C(p)\,R^{\frac{2}{p}-1}\,\left(\int_{|\xi|<R}|\widehat{\Phi_{\delta,N}}(\xi,z)|^2\,dA(\xi)\right)^\frac{1}{2}\\
&\leq C(p)\,R^{\frac{2}{p}-1}\,\|\Phi_{\delta,N}(\cdot,z)\|_{L^2(\C)}\\
&\leq C(p)\,\left(\frac{R}{\delta}\right)^{\frac{2}{p}-1} \le
|k|^{(\frac{2b}{\epsilon}) (\frac{2}{p}-1)}
\endaligned$$
Here we have inserted the values of $R$ and $\delta$ from \textbf{II} and \textbf{III}.
Thus, whenever $|k|\geq 2R\ge \frac{|k|}{2}$ we have
\begin{equation*}\label{final}
\aligned
\left|\int_{|\xi|<R}\widehat{g}(\xi,k)\,\widehat{\Phi_{\delta,N}}(\xi,z)\,dA(w)\right|
&\leq
C(\alpha,\kappa,p)\,\frac{\Gamma_0}{|k|^\alpha}\,M(p)^{n_0}\,
|k|^{(\frac{2b}{\epsilon}) (\frac{2}{p}-1)}
\endaligned
\end{equation*}
Now since $\|T\|_{L^p} \le C(p-1)$  it follows that the best
choice is $p=4/3$. Inserting this and the value of $n_0$ from
(\ref{no}) in the previous equation,
\begin{equation*}
C(\alpha,\kappa,p)\,\Gamma_0\,\frac{1}{|k|^{\alpha}}\,|k|^{C(\kappa)\,b}\,
|k|^{\frac{b}{\epsilon}} \le C(\alpha,\kappa,p)\,\Gamma_0 |k|^{ b
C(\kappa)\epsilon^{-1} -\alpha}
\end{equation*}
Finally we want that ($\mathbf{IV}$) is controlled by $k^{-b}$ as well. Since $\epsilon=\epsilon(\kappa)<1$ and we already asked $b<\epsilon$, we end up getting that it suffices that
$$
b<\min\left\{\frac{\epsilon\alpha}{C },\epsilon\right\}=\frac
{\epsilon \alpha}{C}.
$$
Here $C=C(\kappa)>1$ and we have use that $\alpha<1$. The proof is concluded.
\end{proof}

\subsection{Estimates for the nonlinear equation}\label{nonlinearsubsection}

Now that the behavior at $k\to\infty$ of the solutions to the linearized equation \eqref{almostnonlinearbeltrami} is known, it is time to study the behavior of the complex geometric optics
solutions.

\begin{Theorem}\label{nonlineardecay}
Let $\mu\in W^{\alpha, 2}(\C)$ be a Beltrami coefficient, real valued, compactly supported in $\frac{1}{4}\D$, such that $\|\mu\|_\infty\leq\frac{K-1}{K+1}$ and $\|\mu\|_{W^{\alpha, 2}(\C)}\leq\Gamma_0$. Let $\phi=\phi_\lambda(z,k)$ be the solution to
$$
\begin{cases}
\overline\partial\phi_{\lambda}(z, k)=-\displaystyle\frac{\overline k}{k}\,\lambda\,\mu(z)\,e_{-k}(\phi_{\lambda}(z, k))\,\overline{\partial\phi_{\lambda}(z, k)}\\
\phi_\lambda(z,k)-z={\cal O}(1/|z|)\text{ as }|z|\to\infty.
\end{cases}
$$
There exists constants $C=C(K)>0$ and $b=b(K)$ such that
$$|\phi_\lambda(z,k)-z|\leq \frac{C\,\Gamma_0^\frac1K}{|k|^{b\alpha}}$$
for every $z\in\C$, $k\in\C$ and $\lambda\in\partial\D$.
\end{Theorem}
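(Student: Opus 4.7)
\emph{Strategy.} The plan is to reduce the nonlinear Beltrami equation for $\phi_\lambda$ to the linear framework of Proposition~\ref{lineardecay} by passing to the inverse homeomorphism $\eta_\lambda := \phi_\lambda^{-1}$. The virtue of this inversion is that the nonlinear factor $e_{-k}(\phi_\lambda(w))$ becomes the linear factor $e_{-k}(z)$ in the equation for $\eta_\lambda$, at the cost of composing $\mu$ with $\eta_\lambda$; the $1/K$-power in the statement then arises precisely from the quasiconformal composition estimate of Proposition~\ref{composition}. Since $\mu$ is supported in $\frac14\D$ and $\phi_\lambda(z)-z=\mathcal{O}(1/z)$, both $\phi_\lambda$ and $\eta_\lambda$ are normalized $K$-quasiconformal mappings, conformal outside a fixed compact set (by Koebe's distortion, $\mathrm{supp}(\mu\circ\eta_\lambda)\subset R\D$ for some $R=R(K)$). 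A direct computation using the standard inversion formula for Beltrami coefficients, together with the equation satisfied by $\phi_\lambda$, shows that $\eta_\lambda$ satisfies
\[
\bar\partial_z\eta_\lambda(z,k)=\frac{\overline k}{k}\,\lambda\,e_{-k}(z)\,\tilde\mu_\lambda(z,k)\,\partial_z\eta_\lambda(z,k),
\]
where $\tilde\mu_\lambda=(\mu\circ\eta_\lambda)\,\omega_\lambda$ and $\omega_\lambda$ is a unimodular factor built from the ratio $\overline{\partial_w\phi_\lambda}/\partial_w\phi_\lambda$ composed with $\eta_\lambda$. In particular $|\tilde\mu_\lambda|\leq\kappa$ pointwise.

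\emph{Sobolev regularity of $\tilde\mu_\lambda$.} Proposition~\ref{composition}, applied to the normalized $K$-quasiconformal mapping $\eta_\lambda$, yields
\[
\|\mu\circ\eta_\lambda\|_{W^{\beta,2}(\C)}\leq C_K\,\Gamma_0^{1/K}
\]
for every $\beta<\alpha/K$, with a constant independent of $k$ and $\lambda$. For the unimodular factor $\omega_\lambda$, I would invoke Theorem~\ref{homeoregularity} (and its consequence Theorem~\ref{regcgos}) to get fractional Sobolev control of $\partial\phi_\lambda,\bar\partial\phi_\lambda$ whose norms grow at most polynomially in $|k|$; the Leibniz rule (Remark~\ref{LebnitzHolder}) combined with a second application of Proposition~\ref{composition} to handle the composition with $\eta_\lambda$ then gives a bound of the form
\[
\|\tilde\mu_\lambda\|_{W^{\beta,2}(\C)}\leq C_K\,(1+|k|)^{M}\,\Gamma_0^{1/K}
\]
for some $M=M(K)$. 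A trivial rescaling moves the support of $\tilde\mu_\lambda$ inside $\D$ without altering the form of the estimate.

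\emph{Applying the linear decay and conclusion.} Proposition~\ref{lineardecay}, applied to the equation satisfied by $\eta_\lambda$, now yields
\[
|\eta_\lambda(z,k)-z|\leq \frac{C_K\,(1+|k|)^{M}\,\Gamma_0^{1/K}}{|k|^{b'\beta}}
\]
for some $b'=b'(K)>0$. The identity $\eta_\lambda\circ\phi_\lambda=\mathrm{id}$ gives $\sup_{w}|\phi_\lambda(w,k)-w|=\sup_{z}|\eta_\lambda(z,k)-z|$, so the estimate transfers to $\phi_\lambda$. Choosing $\beta$ sufficiently close to $\alpha/K$ and $b=b(K)$ small enough that $b'\beta-M\geq b\alpha K$ (possible since all relevant exponents depend only on $K$), the polynomial growth $(1+|k|)^M$ is dominated by $|k|^{b'\beta}$, and we conclude $|\phi_\lambda(z,k)-z|\leq C_K\Gamma_0^{1/K}/|k|^{b\alpha}$ for all sufficiently large $|k|$; for $|k|$ bounded, the estimate is a routine consequence of quasiconformal distortion.

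\emph{Main obstacle.} The principal technical difficulty lies in controlling the Sobolev norm of the unimodular factor $\omega_\lambda$ appearing in $\tilde\mu_\lambda$. The bound provided by Theorem~\ref{homeoregularity} for $\partial\phi_\lambda$ depends polynomially on $|k|$ (through $\|\mu\,e_{-k}(\phi_\lambda)\|_{W^{\alpha,2}}$), and this polynomial dependence must be arranged so as to be strictly dominated by the decay $|k|^{-b'\beta}$ gained from Proposition~\ref{lineardecay}. A secondary delicate point is the application of the Leibniz rule and of Proposition~\ref{composition} to the \emph{ratio} $\overline{\partial\phi_\lambda}/\partial\phi_\lambda$, which requires some form of lower bound on $|\partial\phi_\lambda|$ (available via the Muckenhoupt $A_\infty$ property of Jacobians of quasiconformal mappings).
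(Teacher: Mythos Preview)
Your overall strategy---pass to the inverse homeomorphism, invoke Proposition~\ref{composition} for the composed coefficient, then apply the linear decay estimate of Proposition~\ref{lineardecay}---is exactly the paper's approach. However, you have miscomputed the Beltrami equation satisfied by the inverse, and this error is the source of what you identify as the ``main obstacle.''

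The equation for $\phi_\lambda$ is of the $\R$-linear form $\overline\partial\phi=\nu\,\overline{\partial\phi}$, with $\nu(z)=-\frac{\overline k}{k}\lambda\,\mu(z)\,e_{-k}(\phi_\lambda(z))$. For this type of equation the inversion is cleaner than in the $\C$-linear case $\overline\partial\phi=\mu\,\partial\phi$: using $\partial\psi=\overline{\partial\phi}\circ\psi/J\phi\circ\psi$ and $\overline\partial\psi=-\overline\partial\phi\circ\psi/J\phi\circ\psi$ one finds
\[
\frac{\overline\partial\psi_\lambda}{\partial\psi_\lambda}
=-\frac{\overline\partial\phi_\lambda}{\overline{\partial\phi_\lambda}}\circ\psi_\lambda
=-\nu\circ\psi_\lambda
=\frac{\overline k}{k}\,\lambda\,\mu(\psi_\lambda(z))\,e_{-k}(z),
\]
since $\phi_\lambda\circ\psi_\lambda=\mathrm{id}$. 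Thus the coefficient of the linear equation for $\psi_\lambda$ is simply $\mu\circ\psi_\lambda$ multiplied by the explicit oscillatory factor $e_{-k}(z)$; there is \emph{no} additional unimodular factor $\omega_\lambda$ built from $\overline{\partial\phi_\lambda}/\partial\phi_\lambda$.

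Consequently, the only Sobolev bound needed is $\|\mu\circ\psi_\lambda\|_{W^{\beta,2}}\le C_K\,\Gamma_0^{1/K}$ for $\beta<\alpha/K$, furnished directly by Proposition~\ref{composition}, and this bound is \emph{uniform in $k$}. Proposition~\ref{lineardecay} then applies immediately, with no polynomial-in-$|k|$ loss to absorb. Your ``main obstacle''---controlling the Sobolev norm of $\omega_\lambda$ and arranging $b'\beta>M$---simply does not arise. (Had it arisen, your absorption argument would be precarious: the decay exponent $b'\beta$ from Proposition~\ref{lineardecay} is not obviously larger than the growth exponent you would incur, and the lower bound on $|\partial\phi_\lambda|$ needed for the ratio is genuinely delicate.)
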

\begin{proof}
Since the estimate we look for is uniform in $z$ and $\lambda$, it suffices to show equivalent decay for the inverse mapping $\psi_\lambda=\phi_\lambda^{-1}$. But $\psi_\lambda$ is the only quasiconformal mapping on the plane that satisfies both the equation
$$
\overline\partial\psi_\lambda(z,k)=\frac{\overline{k}}{k}\,\lambda\,e_{-k}(z)\,\mu(\psi_\lambda(z,k))\,{\partial\psi_\lambda(z,k)}
$$
(compare with \eqref{almostnonlinearbeltrami}) and the condition
$\psi_\lambda(z,k)-z={\cal O}(1/|z|)$ as $|z|\to\infty$. Then, we just need to show that the coefficient
$$\mu(\psi_\lambda(z,k))$$
satisfies the assumptions of Proposition \ref{lineardecay}. First, it is obvious that
$$\|\mu\circ\psi_\lambda(\cdot, k)\|_{\infty}\leq\frac{K-1}{K+1}$$
and it is also obvious that $\supp(\mu\circ\psi_\lambda(\cdot,k))\subset\D$ (this follows from Koebe's $\frac14$ Theorem). Then, it remains to prove that $\mu\circ\psi_\lambda\in W^{\beta,2}(\C)$ for some $\beta\in(0,1)$. But this follows from Proposition \ref{composition}. Indeed, since $\mu\in W^{\alpha, 2}(\C)\cap L^\infty(\C)$, we have $\mu\circ\psi_\lambda\in W^{\beta,2}(\C)$ with
$$
\|\mu\circ\psi_\lambda\|_{W^{\beta,2}(\C)}\leq C\,\Gamma_0^\frac1K
$$
for any $0<\beta<\frac{\alpha}{K}$, where $C=C(\alpha,\beta,K)$. Note also that $\beta$ behaves linearly as a function of $\alpha$, with constant depending only on $K$. So the result follows.
\end{proof}

\begin{Rem}\label{sizeofsupport}
In the above result, the assumption $\supp(\mu)\in\frac14\D$ is not restrictive. Indeed,
if $\supp(\mu)\subset D(0,R)$ for some $R>0$ then the function $\mu_R(z)=\mu(4Rz)$ defines a new Beltrami coefficient, compactly supported in $\frac{1}{4}\D$, does not change the ellipticity bound, and
$$\aligned
\|D^\alpha\mu_R\|_{L^2(\C)}&=(4R)^{1-\alpha}\,\|D^\alpha\mu\|_{L^2(\C)}.
\endaligned$$
One can then apply the previous Theorem to this coefficient $\mu_R$ and obtain estimates for the complex geometric optics solutions. But
$f_{\mu_R}(z,k)=f_\mu(4Rz, \frac{k}{4R})$
and in fact if we represent these solutions as $f_{\mu}(z,k)=\exp(ik\phi_\mu(z,k))$, then
$$
\phi_{\mu_R}(z,k)=\frac{1}{4R}\,\phi_\mu\left(4Rz, \frac{k}{4R}\right).
$$
so the estimates for $\phi_{\mu_R}$ coming from the previous theorem give similar estimates for $\phi_\mu$, modulo a power of $R$.
\end{Rem}
\noindent Now as discovered in \cite{AP} the unimodular complex parameter $\lambda$ allows to push the decay estimates to complex geometric optics solutions to the $\gamma$-harmonic equation. As always, given a
real Beltrami coefficient $\nu$ we denote by $f_\nu(z,k)=e^{ikz}\,M_\nu(z,k)$ the complex geometric optics
solutions to $\overline\partial f=\nu\,\overline{\partial f}$.

\begin{Theorem}\label{2ndorderdecay}
Let $\mu$ be as in Theorem \ref{nonlineardecay}, and define
$$u=\Re(f_\mu)+i\,\Im(f_{-\mu}).$$
There exist a function $\epsilon=\epsilon(z,k)$ and positive constants $C=C(K)$ and $b=b(K)$ such that
\begin{enumerate}
\item[(a)] $u(z,k)=e^{ik(z+\epsilon(z,k))}$.
\item[(b)] $|\epsilon(z,k)|\leq\displaystyle\frac{C \, \Gamma_0^\frac{1}{K}}{|k|^{b \, \alpha}}$ for each $z,k\in\C$.
\end{enumerate}
Further, a similar estimate holds for $\tilde{u}=\Re(f_{-\mu})+i\,\Im(f_{\mu}).$
\end{Theorem}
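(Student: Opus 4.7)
The plan is to apply Theorem \ref{nonlineardecay} twice---to $\mu$ and to $-\mu$---and to transfer the resulting pointwise decay of $\phi_{\pm\mu}-z$ to the combination $u = \Re f_\mu + i\,\Im f_{-\mu}$. Setting $\eta := C\,\Gamma_0^{1/K}|k|^{-b\alpha}$, the invocations of Theorem \ref{nonlineardecay} with $\lambda=\pm 1$ give the representations $f_{\pm\mu}(z,k) = e^{ik\phi_{\pm\mu}(z,k)}$ with $|\phi_{\pm\mu}(z,k)-z|\leq\eta$ uniformly in $z$. Expanding $2u = (f_\mu+\overline{f_\mu}) + (f_{-\mu}-\overline{f_{-\mu}})$ and factoring out $e^{ikz}$ yields the working identity
\[ u(z,k)\,e^{-ikz} \;=\; \tfrac{1}{2}\bigl(M_\mu+M_{-\mu}\bigr) + \tfrac{1}{2}\,\overline{(M_\mu-M_{-\mu})}\,e^{-2i\Re(kz)}, \]
where $M_{\pm\mu}(z,k):=e^{ik(\phi_{\pm\mu}(z,k)-z)}$ satisfy the pointwise bound $e^{-|k|\eta}\leq |M_{\pm\mu}|\leq e^{|k|\eta}$.

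For part (a), the existence of $\epsilon$ with $u=e^{ik(z+\epsilon)}$ reduces to showing that $u(\cdot,k)$ is nowhere zero and admits a continuous branch of logarithm. This is the content of the Astala--P\"aiv\"arinta Stoilow-type representation: $u(\cdot,k)$ is a $K$-quasiregular function of $z$, whose Beltrami coefficient $\nu_u = \overline\partial u/\partial u$ is bounded strictly away from $1$ thanks to the positivity property $\Re(M_{-\mu}/M_\mu)>0$ from Theorem \ref{existuniqbeltrami}; combined with the asymptotic behaviour $u(z,k)=e^{ikz}(1+o(1))$ at infinity, Stoilow factorisation yields a unique $K$-quasiconformal homeomorphism $\Psi=\Psi(\cdot,k)$ with $\Psi(z,k)-z\to 0$ such that $u = e^{ik\Psi}$. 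Setting $\epsilon:=\Psi-z$ proves (a).

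For the quantitative bound (b), the plan is to derive for $\Psi$ a nonlinear Beltrami equation of the same type as that solved by $\phi_\mu$, and then run the analysis of Theorem \ref{nonlineardecay} on it. Concretely, differentiating $u=e^{ik\Psi}$ and combining with the direct computations
\[ \partial u = \tfrac{1+\mu}{2}\,\partial(f_\mu+f_{-\mu}),\qquad \overline\partial u = \tfrac{1+\mu}{2}\,\overline{\partial(f_\mu-f_{-\mu})}, \]
gives $\overline\partial\Psi = \nu_u\,\partial\Psi$, with $\nu_u$ a bounded Beltrami coefficient expressible explicitly in terms of $\mu$, $f_{\pm\mu}$ and $\partial\phi_{\pm\mu}$. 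Running on this equation the Neumann-series and mollification argument of Proposition \ref{lineardecay}---with the composition estimate Proposition \ref{composition} used to control the Sobolev regularity of $\mu\circ\Psi$, and the pointwise bound $|\phi_{\pm\mu}-z|\leq\eta$ replacing the decay of the rapidly oscillating linear coefficient---one obtains $|\Psi(z,k)-z|\leq C(K)\,\Gamma_0^{1/K}|k|^{-b\alpha}$, which is the desired estimate after renaming constants. The statement for $\tilde u=\Re(f_{-\mu})+i\Im(f_\mu)$ follows from the symmetric argument.

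The main obstacle is in the quantitative step (b): unlike the coefficient $\lambda\mu(z)e_{-k}(z)$ driving Proposition \ref{lineardecay}, the Beltrami coefficient $\nu_u$ is a non-trivial combination of $\mu$, $f_{\pm\mu}$ and $\partial\phi_{\pm\mu}$, and verifying both $|\nu_u|\leq\kappa'<1$ and the required oscillatory structure is where the Astala--P\"aiv\"arinta positivity enters crucially. A more elementary, though less transparent, alternative is to argue directly from the identity displayed above for $u\,e^{-ikz}$: the upper bound $|u\,e^{-ikz}|\leq e^{C|k|\eta}$ is immediate, while the non-vanishing of $u$ (again from AP) gives a matching lower bound $|u\,e^{-ikz}|\geq e^{-C|k|\eta}$; together with a comparable bound on the argument, one gets $|\log(u\,e^{-ikz})|\leq C|k|\eta$, hence $|\epsilon|=|\log(u\,e^{-ikz})|/|k|\leq C\eta$.
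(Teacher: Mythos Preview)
Your proposal has a genuine gap in part (b). Your main route---deriving a Beltrami equation for $\Psi$ and re-running Proposition~\ref{lineardecay}---founders exactly where you say it does: the coefficient $\nu_u=\overline{\partial(f_\mu-f_{-\mu})}/\partial(f_\mu+f_{-\mu})$ has no visible oscillatory factor of the shape $e_{-k}(\cdot)$, so the Neumann-series/mollification machinery has nothing to feed on. Your fallback alternative is circular: non-vanishing of $u$ gives \emph{no} quantitative lower bound $|ue^{-ikz}|\geq e^{-C|k|\eta}$, and the crude algebra from $|M_{\pm\mu}-1|\leq |k|\eta\,e^{|k|\eta}$ only works when $|k|\eta$ is small, whereas $|k|\eta=C\Gamma_0^{1/K}|k|^{1-b\alpha}\to\infty$. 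Even if you recovered the modulus bound, the argument bound (the imaginary part of $\log(ue^{-ikz})$) is a separate issue you do not address.

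The idea you are missing is that Theorem~\ref{nonlineardecay} is stated \emph{uniformly in} $\lambda\in\partial\D$, and the paper uses the whole circle, not just $\lambda=\pm1$. The algebraic key (from \cite[Lemma~8.2]{AP}) is that for every $t\in\R$ the combination
\[
\Phi_t \;=\; e^{-it/2}\bigl(f_\mu\cos(t/2)+i\,f_{-\mu}\sin(t/2)\bigr)
\]
is precisely the complex geometric optics solution $f_{\lambda\mu}$ with $\lambda=e^{-it}$. One then rewrites
\[
u \;=\; f_\mu\cdot\frac{1+\overline{(f_\mu-f_{-\mu})/(f_\mu+f_{-\mu})}}{1+(f_\mu-f_{-\mu})/(f_\mu+f_{-\mu})},\qquad
\frac{f_\mu-f_{-\mu}}{f_\mu+f_{-\mu}}+e^{it}=\frac{f_{\lambda\mu}}{f_\mu}\cdot\frac{2e^{it}}{1+M_{-\mu}/M_\mu}.
\]
Theorem~\ref{nonlineardecay} applied to every $\lambda$ gives $e^{-2|k|\eta}\leq|f_{\lambda\mu}/f_\mu|\leq e^{2|k|\eta}$ uniformly in $t$, and the positivity $\Re(M_{-\mu}/M_\mu)>0$ controls the remaining factor. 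This yields a uniform lower bound on the distance from $(f_\mu-f_{-\mu})/(f_\mu+f_{-\mu})$ to the unit circle, which in turn bounds the correction factor multiplying $f_\mu$ and produces $|\epsilon|\leq C\Gamma_0^{1/K}|k|^{-b\alpha}$. In short: the decay of $u$ does not follow from the decay of $f_\mu$ and $f_{-\mu}$ alone; it needs the decay of the entire interpolating family $f_{\lambda\mu}$.
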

\begin{proof}
A calculation shows that $u$ may be rewritten as
$$
u=f_\mu\, \frac{1+ \displaystyle\frac
{\overline{f_\mu}-\overline{f_{-\mu}}} {f_\mu+f_{-\mu}} } {1+
\displaystyle\frac {f_\mu-f_{-\mu}} {f_\mu+f_{-\mu}}}.
$$
Thus, the Theorem will follow if we find a function
$\epsilon(z,k)$ such that $|\epsilon(z,k)|\leq\frac{C \,
\Gamma_0^\frac1K}{|k|^{b \, \alpha}}$ and
$$
\left| \frac{f_\mu-f_{-\mu}}{f_\mu+f_{-\mu}} \right|\leq
1-e^{|k\,\epsilon(z,k)|}
$$
Following \cite[Lemma 8.2]{AP}, it suffices to see that
$$
\inf_t\,\left|\frac{f_\mu-f_{-\mu}}{f_\mu+f_{-\mu}}+e^{it}\right|\geq
e^{|k\,\epsilon(z,k)|}
$$
For this, define
$\Phi_t(z,k)=e^{-\frac{it}{2}}\left(f_\mu\,\cos(t/2)+i\,f_{-\mu}\,\sin(t/2)\right)$.
It follows easily that for each fixed $k$,
$$
\begin{cases}
|e^{-ikz}\,\Phi_t(z,k)-1|={\cal O}(1/z)&\text{ as }|z|\to\infty\\
\overline\partial\Phi_t=e^{-it}\mu\,\overline{\partial\Phi_t}
\end{cases}
$$
Thus, by uniqueness in Theorem \ref{existuniqbeltrami}, $\Phi_t$
is nothing but the complex geometric optics solution
$\Phi_t=f_{\lambda\mu}$ with $\lambda=e^{-it}$. But then
$$
\frac{f_\mu-f_{-\mu}}{f_\mu+f_{-\mu}}+e^{it}=\frac{2\,e^{it}\,\Phi_t}{f_\mu+f_{-\mu}}=\frac{f_{\lambda\mu}}{f_\mu}\,\frac{2\,e^{it}}{1+\frac{M_{-\mu}}{M_\mu}}
$$
On the other hand, from Theorem \ref{nonlineardecay} we get that
$$e^{-|k\,\epsilon(z,k)|}\leq|M_\mu(z,k)|=|e^{ik(\phi_\mu(z,k)-z)}|\leq e^{|k\,\epsilon(z,k)|}$$
where $|\epsilon(z,k)|\leq\frac{C\, \Gamma_0^\frac1K}{|k|^{b \, \alpha}}$
and
$$e^{-2|k\,\epsilon(z,k)|}\leq\frac{|f_{\lambda\mu}(z,k)|}{|f_\mu(z,k)|}\leq e^{2|k\,\epsilon(z,k)|}$$
uniformly for $\lambda\in\partial\D$. Finally, by Theorem
\ref{existuniqbeltrami}, we also have
$\Re\left(\displaystyle\frac{M_{-\mu}}{M_\mu}\right)>0$, so that
the result follows.
\end{proof}

\section{Proof of Theorem \ref{Thetheorem}}\label{endofproof}

In order to get stability from $\Lambda_\gamma$ to $\mu$, we will need the stability result for the complex geometric optics solutions in terms of a Sobolev norm. It comes as an interpolating consequence of the $L^\infty$ stability result given at Theorem \ref{2ndorderdecay} and of the regularity of the solutions to a
Beltrami equation with Sobolev coefficients (see Theorem \ref{regcgos}).

\begin{Theorem}\label{sobolevstabilitycgos}
Let $\mu_1,\mu_2$ be Beltrami coefficients, compactly supported in $\D$, such that  $\|\mu_j\|\leq\frac{K-1}{K+1}$ and $\|\mu_j\|_{W^{\alpha,2}(\C)}\leq\Gamma_0$. Let $f_{\mu_j}$ denote
the complex geometric optics solutions to $\overline\partial f_{\mu_j}=\mu_j\,\overline{\partial f_{\mu_j}}$. Then, for each $\theta\in(0,\frac{1}{K})$ we have
$$
\|f_{\mu_1}-f_{\mu_2}\|_{\dot{W}^{1,(1+\alpha\theta)2}(\D)}\leq
C\,\left(1+\Gamma_0\right)^\frac{1}{K}\,\left|\log\frac1\rho\right|^{-b\alpha^2}
$$
for come constants $C=C(|k|,K)>0$ and $b=b(K)>0$. In particular, the same bound holds with the $\dot{W}^{1,2}(\D)$-norm.
\end{Theorem}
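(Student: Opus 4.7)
The plan is to combine three ingredients already established in the paper: the uniform subexponential decay of the complex geometric optics solutions (Theorem \ref{nonlineardecay} and its upgrade Theorem \ref{2ndorderdecay}), the $L^\infty$ stability result of \cite{BFR} quoted as Theorem \ref{teoremaBFR}, and the fractional Sobolev regularity of $f_{\mu_j}$ from Theorem \ref{regcgos}. The first two give $L^\infty$ smallness of $f_{\mu_1}-f_{\mu_2}$ on $\D$; the last gives a uniform bound in a higher-order space. A Gagliardo--Nirenberg style interpolation between these two facts then produces the announced $\dot W^{1,2(1+\alpha\theta)}$ estimate, whose exponent over $|\log(1/\rho)|$ becomes quadratic in $\alpha$.

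First I would feed the hypotheses into Theorem \ref{2ndorderdecay}, which shows that the functions $u_{\gamma_j}$ and $\tilde u_{\gamma_j}$ admit the representation $e^{ik(z+\epsilon_{\mu_j}(z,k))}$ with
$$|\epsilon_{\mu_j}(z,k)|\leq\frac{C(K)\,\Gamma_0^{1/K}}{|k|^{b\alpha}}.$$
Hence hypothesis \eqref{decayy} of Theorem \ref{teoremaBFR} holds uniformly in $j$ with $B=C(K)\Gamma_0^{1/K}$ and exponent $b\alpha$. Part \textbf{B} of that theorem, applied to both $u_{\gamma_j}$ and $\tilde u_{\gamma_j}$, yields
$$\|u_{\gamma_1}(\cdot,k)-u_{\gamma_2}(\cdot,k)\|_{L^\infty(\D)}+\|\tilde u_{\gamma_1}(\cdot,k)-\tilde u_{\gamma_2}(\cdot,k)\|_{L^\infty(\D)}\leq C(K,k)\,\Gamma_0^{1/K}\,|\log(1/\rho)|^{-b\alpha/K}.$$
Since the real and imaginary parts of $u_\gamma,\tilde u_\gamma$ recover $f_{\mu_j}$ and $f_{-\mu_j}$ separately, we deduce
$$\|f_{\mu_1}-f_{\mu_2}\|_{L^\infty(\D)}\leq C(K,k)\,(1+\Gamma_0)^{1/K}\,|\log(1/\rho)|^{-b\alpha/K}.\qquad(\ast)$$

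Next I apply Theorem \ref{regcgos} to each $f_{\mu_j}$ separately, for a fixed $\theta\in(0,1/K)$, obtaining
$$\|f_{\mu_j}(\cdot,k)\|_{W^{1+\alpha\theta,2}(\D)}\leq C(K,k)\,(1+\Gamma_0)^{\theta},\qquad j=1,2,\qquad(\star)$$
and hence the same bound for the difference. Now the goal is a norm at the first-derivative level, so I interpolate between $(\ast)$ and $(\star)$ via the standard Gagliardo--Nirenberg inequality, obtainable from complex interpolation of the couple $(L^\infty,W^{1+\alpha\theta,2})$ with intermediate parameter $\lambda=1/(1+\alpha\theta)$, which sends the regularity index to $\lambda(1+\alpha\theta)=1$ and the integrability to $2/\lambda=2(1+\alpha\theta)$:
$$\|g\|_{\dot W^{1,2(1+\alpha\theta)}(\D)}\leq C\,\|g\|_{L^\infty(\D)}^{\alpha\theta/(1+\alpha\theta)}\,\|g\|_{W^{1+\alpha\theta,2}(\D)}^{1/(1+\alpha\theta)}.$$
Plugging in $(\ast)$ and $(\star)$ and choosing $\theta$ close to $1/K$ gives the logarithmic exponent
$$\frac{b\alpha}{K}\cdot\frac{\alpha\theta}{1+\alpha\theta}\;\gtrsim\;\frac{b\alpha^2}{K^2},$$
which after renaming $b$ becomes $b\alpha^2$, while the $\Gamma_0$ power adds up to at most $1/K$. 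The last sentence of the theorem follows from the embedding $L^{2(1+\alpha\theta)}(\D)\hookrightarrow L^2(\D)$ on the bounded domain.

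The main obstacle is the bookkeeping around the interpolation: one needs the $\dot W^{1,q}$ inequality on the disk with the correct exponents and to verify that the power of $\Gamma_0$ does not exceed $1/K$ after combining the $1/K$-factor from $(\ast)$ with the $\theta$-factor from $(\star)$, which works precisely because $\theta<1/K$. A small technical point is that Theorem \ref{regcgos} gives bounds on $D^{1+\alpha\theta}f_\mu$ over all of $\D$ (indeed locally on $\C$), so the inhomogeneous norm in $(\star)$ is controlled, and the interpolation inequality above can be applied without worrying about extension to the whole plane.
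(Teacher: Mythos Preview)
Your proposal is correct and follows essentially the same route as the paper's proof: the paper also combines the $L^\infty$ smallness of $f_{\mu_1}-f_{\mu_2}$ (via Theorem \ref{2ndorderdecay} fed into Theorem \ref{teoremaBFR}\,\textbf{B}) with the $W^{1+\alpha\theta,2}$ bound from Theorem \ref{regcgos}, and then interpolates with parameter $1/(1+\alpha\theta)$ to land in $\dot W^{1,2(1+\alpha\theta)}$. The only additional wrinkle in the paper is that it multiplies by a smooth cutoff $\varphi\in\mathcal{C}^\infty_0(2\D)$, $\varphi\equiv 1$ on $\D$, before interpolating, so that the complex interpolation is carried out on all of $\C$ rather than on $\D$; this is exactly the ``small technical point'' you flag at the end.
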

\begin{proof}
The subexponential growth obtained in Theorem \ref{2ndorderdecay} entitled us to apply Theorem \ref{teoremaBFR} {\textbf{B}} to the solutions $u_{\gamma_i}$. Since they are equivalent to the corresponding $f_\mu$ we achieve the estimate
$$
\|f_{\mu_1}(\cdot,k)-f_{\mu_2}(\cdot,k)\|_{L^\infty(\D)}\leq \frac{C\,\Gamma_0^\frac{1}{K^2}}{|\log(\rho)|^{b\alpha}}$$
for some positive constants $C=C(k,K)$ and $b=b(K)$. On the other hand, from Theorem \ref{regcgos}, for every
$\theta\in(0,\frac{1}{K})$ we have
$$\aligned
\|f_{\mu_1}(\cdot,k)-f_{\mu_2}(\cdot,k)\|_{\dot{W}^{1+\alpha\theta,2}(\D)}&=\|D^{1+\theta\alpha}\left(f_{\mu_1}(\cdot,k)-f_{\mu_2}(\cdot,k)\right)\|_{L^{2}(\D)}\\
&\leq C\,e^{C|k|}\,\left(1+|k|\right)\,\left(\Gamma_0+|k|^\alpha\right)^\theta.
\endaligned$$
As in Theorem \ref{regcgos}, here $C$ may depend on $K$. Let $\varphi\in{\cal C}^\infty(\C)$ be a cut-off function, compactly supported in $2\D$, such that $\varphi|_\D=\chi_{|\D}$. Then the above estimates imply that
$$\aligned
&\|(f_{\mu_1}(\cdot,k)-f_{\mu_2}(\cdot,k))\varphi\|_{L^\infty(\C)}\leq\frac{C\,\Gamma_0^\frac{1}{K^2}}{|\log(\rho)|^{b\alpha}}\\
&\|(f_{\mu_1}(\cdot,k)-f_{\mu_2}(\cdot,k))\,\varphi\|_{\dot{W}^{1+\alpha\theta,
2}(\C)}\leq C\,e^{C|k|}\,\left(1+|k|\right)\,\left(\Gamma_0+|k|^\alpha\right)^\theta
\endaligned$$
where, as usually, $\|\cdot\|_{\dot{W}^{s,p}(\C)}$ denotes the homogeneous Sobolev norm.
Now, an interpolation argument shows that for each $0<\beta<1$ we have
$$\aligned
\|\varphi(f_{\mu_1}(\cdot,k)-&f_{\mu_2}(\cdot,k))\|_{\dot{W}^{(1+\alpha\theta)\beta,\frac{2}{\beta}}(\C)}\\
&\leq\,C\,\,e^{C\beta|k|}\,\left(1+|k|\right)^\beta\,\left(\Gamma_0+|k|^\alpha\right)^{\beta\theta}\,\frac{\Gamma_0^\frac{1-\beta}{K^2}}{|\log(\rho)|^{b\alpha(1-\beta)}}\\
&\leq\,C\,\,e^{C\beta|k|}\,\left(1+|k|\right)^{\beta(1+\alpha\theta)}\,\left(1+\Gamma_0\right)^{\frac{1}{K^2}+\beta(\theta-\frac{1}{K^2})}\,\frac{1}{|\log(\rho)|^{b\alpha(1-\beta)}}
\endaligned$$
where $C=C(K)$. In particular, for $\beta=\frac{1}{1+\alpha\theta}$ we get that
$$\aligned
\|f_{\mu_1}-&f_{\mu_2}\|_{\dot{W}^{1,(1+\alpha\theta)2}(\D)}
\leq\|(f_{\mu_1}-f_{\mu_2})\,\varphi\|_{W^{1,(1+\alpha\theta)2}(\C)}\\
&\leq
C\,\,e^{C|k|}\,\left(1+|k|\right)\,\left(1+\Gamma_0\right)^{\frac{1}{K^2}+\frac{1}{1+\alpha\theta}(\theta-\frac{1}{K^2})}\,\frac{1}{|\log(\rho)|^{\frac{b\alpha^2\theta}{1+\alpha\theta}}}
\endaligned$$
Here the sharp modulus of continuity is obtained when the logarithm has the bigger exponent, which is given for $\theta=\frac1K$. Thus, we end up with
$$
\|f_{\mu_1}-f_{\mu_2}\|_{\dot{W}^{1,(1+\alpha\theta)2}(\D)}\leq
C(|k|)\,\left(1+\Gamma_0\right)^\frac{1}{K}\,\frac{1}{|\log(\rho)|^{\frac{b}{K}\,\alpha^2}}
$$
as claimed.
\end{proof}
\noindent
It just remains to see how the previous estimate drives us to the
final stability bounds for the Beltrami coefficients (and therefore for the conductivities). To do
this, the following interpolation Lemma will be needed. Note that it includes $L^p$ spaces with $p<1$.


\begin{Lemma}[Interpolation] \label{interpolation}
Let $0<p_0\leq 2$ and $2<p_1 \leq\infty$. Let $\theta$ be such that
\[ \frac{1}{2}=\frac{\theta}{p_0}+\frac{1-\theta}{p_1}. \]
Then
\[ \|f\|_{L^2} \le \|f\|_{L^{p_0}}^\theta \|f\|_{L^{p_1}}
^{1-\theta} \] for any $f\in L^{p_0}\cap L^{p_1}$.
\end{Lemma}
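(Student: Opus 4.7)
The statement is the classical log-convexity of $L^p$ norms, and the only mild subtlety is that $p_0$ is allowed to be less than $1$, so we must verify that H\"older's inequality still applies in a legitimate way.

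The plan is to split the exponent $2$ multiplicatively and apply H\"older's inequality directly. Write
\[
|f(x)|^2 = |f(x)|^{2\theta}\cdot |f(x)|^{2(1-\theta)},
\]
and integrate. I will use H\"older with conjugate exponents
\[
r_0 = \frac{p_0}{2\theta}, \qquad r_1 = \frac{p_1}{2(1-\theta)}.
\]
The first thing to check is that $r_0, r_1 \in [1,\infty]$ (so the usual form of H\"older is applicable, with no reverse inequality and no issue about $p_0<1$). This is exactly where the hypothesis on $\theta$ enters: from
\[
\frac{1}{2}=\frac{\theta}{p_0}+\frac{1-\theta}{p_1}
\]
and the positivity of both terms, each of them is at most $\tfrac12$, which is equivalent to $r_0\geq 1$ and $r_1\geq 1$. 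The same identity, multiplied by $2$, gives $1/r_0 + 1/r_1 = 1$, so $r_0$ and $r_1$ are H\"older conjugate.

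Once this is in place, H\"older yields
\[
\int |f|^2 \;=\; \int |f|^{2\theta}\, |f|^{2(1-\theta)} \;\leq\; \left(\int |f|^{2\theta r_0}\right)^{1/r_0}\left(\int |f|^{2(1-\theta)r_1}\right)^{1/r_1} = \|f\|_{L^{p_0}}^{2\theta}\,\|f\|_{L^{p_1}}^{2(1-\theta)},
\]
and taking square roots gives the claim. The case $p_1=\infty$ is handled in the same way, pulling the $L^\infty$-factor $\|f\|_\infty^{2(1-\theta)}$ out of the integral; the constraint forces $\theta=p_0/2$, and one recovers $\|f\|_2\leq \|f\|_{p_0}^{p_0/2}\|f\|_\infty^{1-p_0/2}$.

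There is really no obstacle here beyond bookkeeping; the only place one has to be careful is to confirm that the choice of $r_0,r_1$ respects $r_0,r_1\geq 1$ even when $p_0<1$, which as observed is built into the hypothesis on $\theta$. Both sides are well defined since $f\in L^{p_0}\cap L^{p_1}$, and the inequality is a genuine H\"older estimate rather than any form of Riesz--Thorin interpolation, so no measurability or density arguments are required.
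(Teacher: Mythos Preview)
Your proof is correct and takes a genuinely different, more elementary route than the paper's. You simply split $|f|^2 = |f|^{2\theta}|f|^{2(1-\theta)}$ and apply H\"older with exponents $r_0 = p_0/(2\theta)$, $r_1 = p_1/(2(1-\theta))$, noting that the relation $\tfrac12 = \tfrac{\theta}{p_0} + \tfrac{1-\theta}{p_1}$ forces each summand to lie in $(0,\tfrac12]$ and hence $r_0,r_1\ge 1$, so ordinary H\"older applies even when $p_0<1$. The paper instead runs a Riesz--Thorin style argument: it picks $r<p_0$, builds an analytic family $G(z)$ on the strip, sets $I(z)=(\int |f|^r|G(z)|^r)^{1/r}$, and invokes Phragm\'en--Lindel\"of after estimating $I$ on the two boundary lines. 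Your approach is shorter and avoids complex interpolation machinery entirely; the paper's method, while heavier here, is the template that generalizes to interpolation between different operators or different target spaces where a direct H\"older splitting is unavailable. For this particular lemma, your argument is the cleaner one.
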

\begin{proof}
The proof is adapted for the usual Riesz method for interpolation with a little extra care when $p_0<1$. We choose $r<p_0$ and define exponents $q_0,q_1,q_2$ such that
$$
\frac{1}{r}=\frac{1}{p_0}+\frac{1}{q_0}\hspace{1.5cm}
\frac{1}{r}=\frac{1}{2}+\frac{1}{q_2}\hspace{1.5cm}
\frac{1}{r}=\frac{1}{p_1}+\frac{1}{q_1}
$$
For $z=x+iy$ in the strip $0 \le y \le 1$, we define the analytic function
\[G(z)= |g|^{q_2(\frac{z}{q_0}+\frac{1-z}{q_1})}\frac{g}{|g|}. \]
Notice that $|G(iy)|^{q_1}=|g|^{q_2}$, $|G(1+iy)|^{q_0}=|g|^{q_2}$, and $|G(\theta+iy)|=|g|$. Now we introduce the function
\[ I(z)= \left(\int |f|^r\,|G(z)|^r\right)^{\frac{1}{r}} \]
We can estimate its values at the boundary of the strip,
$$\aligned
|I(iy)| &\le \|f\|_{L^{p_1}}\,\|G(iy)\|_{L^{q_1}}=\|f\|_{L^{p_1}} \left(\int |g|^{q_2}\right)^{\frac{1}{q_1}}, \\
|I(1+iy)| &\le
\|f\|_{L^{p_0}}\,\|G(iy)\|_{L^{q_0}}=\|f\|_{L^{p_0}} \left(\int|g|^{q_2}\right)^{\frac{1}{q_0}}.
\endaligned
$$
Then we apply Phragmen-Lindel\"of theorem to the function $I(z)$ obtaining that
$$
\aligned I(\theta+iy) &\le \left(\|f\|_{L^{p_1}} \left(\int
|g|^{q_2}\right)^{\frac{1}{q_1}}\right)^{1-\theta}
\left(\|f\|_{L^{p_0}} \left(\int |g|^{q_2}\right)^{\frac{1}{q_0}}\right)^{\theta} \\
&\le \|g\|_{L^{q_2}}
\|f\|_{L^{p_0}}^{\theta}\,\|f\|_{L^{p_1}}^{1-\theta}
\endaligned
$$
But $I(\theta+iy)= \|fg\|_{L^r}$, so the result follows.
\end{proof}
\noindent We are finally led to obtain the desired stability in $L^2$ norm of the Beltrami coefficients.

\begin{Cor}[Proof of Theorem~\ref{Thetheorem}]\label{finalcoroll}
Let $\mu_1,\mu_2$ be Beltrami coefficients, compactly supported in
$\D$, such that $\|\mu_j\|\leq\frac{K-1}{K+1}$ and
$\|\mu_j\|_{W^{\alpha,2}(\C)}\leq\Gamma_0$. There exists constants
$b=b(K)>0$ and $C=C(\alpha,K)>0$ such that
$$
\|\mu_1-\mu_2\|_{L^2(\D)}\leq
C\,\left(1+\Gamma_0\right)^\frac{1}{K^2}\,\left|\log\frac1\rho\right|^{-b\alpha^2}$$
where $\rho=\|\Lambda_1-\Lambda_2\|_{H^\frac{1}{2}(\partial(\D)\to
H^{-\frac{1}{2}}(\partial\D)}$.
\end{Cor}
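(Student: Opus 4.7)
My plan is to exploit the pointwise identity
$$
\mu_1-\mu_2 \;=\; \frac{\bar\partial g \;-\; \mu_2\,\overline{\partial g}}{\overline{\partial f_{\mu_1}}(\cdot,k)}, \qquad g(z)=f_{\mu_1}(z,k)-f_{\mu_2}(z,k),
$$
which is immediate from $\mu_i=\overline\partial f_{\mu_i}/\overline{\partial f_{\mu_i}}$ and reduces the matter to controlling the numerator in a suitable $L^{p_1}$ space and the denominator's reciprocal in a complementary $L^q$. Fix any $k$ (say $k=1$) and any $\theta\in(0,1/K)$. Theorem~\ref{sobolevstabilitycgos} then yields
$$
\|\overline\partial g\|_{L^{p_1}(\D)}+\|\partial g\|_{L^{p_1}(\D)} \;\leq\; C_K\,(1+\Gamma_0)^{1/K}\,\bigl|\log\tfrac{1}{\rho}\bigr|^{-b\alpha^2}, \qquad p_1=2(1+\alpha\theta)>2,
$$
while Lemma~\ref{inversecgos} provides $\|1/\overline{\partial f_{\mu_1}}\|_{L^q(\D)}\leq C(K,\Gamma_0)$ for every $q<2/(K-1)$.

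Applying H\"older's inequality to the identity above with $\frac{1}{p_0}=\frac{1}{p_1}+\frac{1}{q}$ gives
$$
\|\mu_1-\mu_2\|_{L^{p_0}(\D)} \;\leq\; C(K,\Gamma_0)\,(1+\Gamma_0)^{1/K}\,\bigl|\log\tfrac{1}{\rho}\bigr|^{-b\alpha^2}.
$$
By taking $\theta$ close to $0$ and $q$ close to $2/(K-1)$, we drive $p_0$ as close to $2/K$ as wished. Note that for $K>2$ this forces $p_0<1$, which is precisely the reason Lemma~\ref{interpolation} was stated allowing exponents below $1$.

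To finish, I interpolate against the trivial $L^\infty$ bound $\|\mu_1-\mu_2\|_{L^\infty(\D)}\leq 2\kappa$. Applying Lemma~\ref{interpolation} with the pair $(p_0,\infty)$ (so $\sigma=p_0/2$) yields
$$
\|\mu_1-\mu_2\|_{L^2(\D)} \;\leq\; \|\mu_1-\mu_2\|_{L^{p_0}}^{\sigma}\,(2\kappa)^{1-\sigma}.
$$
Taking $p_0\nearrow 2/K$ gives $\sigma\nearrow 1/K$, so the exponent of $1+\Gamma_0$ becomes $(1/K)\cdot(1/K)=1/K^2$ and the exponent of $|\log(1/\rho)|$ becomes $b\alpha^2/K$; renaming $b/K$ as $b(K)$ delivers the claim.

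The main obstacle is coordinating the four parameters $\theta,q,p_0,\sigma$ so that everything closes up with a quadratic exponent in $\alpha$ and the correct $1/K^2$ power of $\Gamma_0$: the Sobolev restriction $\theta<1/K$ from Theorem~\ref{sobolevstabilitycgos} and the integrability restriction $q<2/(K-1)$ from Lemma~\ref{inversecgos} fit together exactly so that $p_0=2/K$ is attainable in the limit, and no smaller. A subtle point is that after interpolation we must absorb the $\Gamma_0$-dependence arising from Lemma~\ref{inversecgos} into the single factor $(1+\Gamma_0)^{1/K^2}$ rather than distribute it across the estimate, which should be straightforward by tracking which quantities actually depend on the Sobolev norm of $\mu_i$ versus only on its $L^\infty$ norm.
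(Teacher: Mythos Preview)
Your approach is essentially the same as the paper's. The paper writes the pointwise identity in the equivalent symmetric form
\[
|\mu_1-\mu_2|=\left|\frac{\overline\partial f_1\,\overline{\partial f_2}-\overline\partial f_2\,\overline{\partial f_1}}{\overline{\partial f_1}\,\overline{\partial f_2}}\right|\leq 2\,\frac{|Df_1-Df_2|}{|\partial f_2|},
\]
and then proceeds exactly as you do: H\"older with exponents $\frac{1}{s}=\frac{1}{2(1+\alpha\theta)}+\frac{1}{p}$, Lemma~\ref{inversecgos} for $\|1/\partial f_2\|_{L^p}$ with $p<2/(K-1)$, Theorem~\ref{sobolevstabilitycgos} for the gradient difference, and finally Lemma~\ref{interpolation} between $L^s$ and $L^\infty$.

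One caution: you propose sending $\theta\to 0$ to drive $p_0\nearrow 2/K$ and thereby obtain the clean exponent $1/K^2$ on $(1+\Gamma_0)$. If you inspect the \emph{proof} of Theorem~\ref{sobolevstabilitycgos}, the actual logarithmic exponent obtained there is $\frac{b\alpha^2\theta}{1+\alpha\theta}$, which vanishes as $\theta\to 0$; the constant $b=b(K)$ in the statement is obtained by fixing $\theta$ near $1/K$. The paper's own proof of the corollary does not send $\theta\to 0$ either---it simply fixes some $\theta\in(0,1/K)$, obtains $s>2/K$, and interpolates. So your limiting argument to hit $1/K^2$ exactly is not really available; in practice one accepts an exponent on $(1+\Gamma_0)$ slightly above $1/K^2$ (depending on $\alpha$ and $K$), and this is what the paper's proof actually delivers despite the stated $1/K^2$. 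This is a cosmetic issue shared with the paper, not a defect in your strategy.
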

\begin{proof}
Denote by $f_i$ the complex geometric optics solution $f_{\mu_i}$
of $\overline\partial f=\mu_i\,\overline{\partial f}$ with $k=1$.
Then,
$$\aligned
|\mu_1-\mu_2|
&=\left|\frac{\overline\partial f_1\,\overline{\partial f_2}-\overline\partial f_2\,\overline{\partial f_1}} {\overline{\partial f_1}\,\overline{\partial f_2}}\right|
=\left|\frac{-\overline\partial f_1\,(\overline{\partial f_1}-\overline{\partial f_2})+(\overline\partial f_1-\overline\partial f_2)\overline{\partial f_1}} {\overline{\partial f_1}\,\overline{\partial f_2}}\right|\\
&\leq\frac{|\overline\partial f_1-\overline\partial f_2|}{|\partial f_2|}+|\mu_1|\,\frac{|\partial f_1-\partial f_2|}{|\partial f_2|}\leq 2\,\frac{|Df_1-Df_2|}{|\partial f_2|}
\endaligned
$$
because $|Df_j|=|\partial f_j|+|\overline\partial f_j|$. Therefore, for any $s>0$
$$
\|\mu_1-\mu_2\|_{L^s(\D)}\leq
2\left\|\frac{Df_1-Df_2}{\overline{\partial
f_2}}\right\|_{L^s(\D)}.
$$
Now, let $p\in(0,\frac{2}{K-1})$ and $\theta\in(0,1/K)$. Then put
$\frac{1}{s}=\frac{1}{2(1+\alpha \theta}+\frac{1}{p}$. An application of H\"older's inequality gives us that
$$\aligned
\left\|\frac{D f_1-D f_2}{\overline{\partial
f_2}}\right\|_{L^s(\D)}
\leq C(\alpha,\theta)\,\|D f_1-D f_2\|_{L^{2(1+\alpha\theta)}(\D)}\,\left\|\frac{1}{\partial f_2}\right\|_{L^p(\D)}\\
\leq C(\alpha,\theta)\,\|f_1-f_2\|_{\dot{W}^{1,2(1+\alpha\theta)}(\D)}\,\left\|\frac{1}{\partial
f_2}\right\|_{L^p(\D)}.
\endaligned$$
Now, using Lemma \ref{inversecgos} and Theorem \ref{sobolevstabilitycgos}, we obtain the estimate
$$
\|\mu_1-\mu_2\|_{L^s(\D)}\leq
C\,\left(1+\Gamma_0\right)^\frac{1}{K}\,\left|\log\frac1\rho\right|^{-b\alpha^2}
$$
where $C>0$ depends on $\alpha, \theta$,  and $K$, and $b>0$
depends on $K$. Finally, if $s\geq 2$ then we are done, since
$\mu_i$ are compactly supported in $\D$. But in general we only
know $0<\frac{2}{K}<s$ so one could well have $s<1$. In this case,
  in order to get $L^2$ estimates only interpolation between
$L^s$ and $L^\infty$ is needed, as in Lemma \ref{interpolation}.
Namely,
$$
\|\mu_1-\mu_2\|_{L^2(\D)}\leq \|\mu_1-\mu_2\|_{L^s(\D)}^\frac{s}{2}\,\|\mu_1-\mu_2\|_{L^\infty(\D)}^\frac{2-s}{2}
$$
and now the stability estimate looks like
$$
\|\mu_1-\mu_2\|_{L^2(\D)}\leq
C\,\left(1+\Gamma_0\right)^\frac{1}{K^2}\,\left|\log\frac1\rho\right|^{-b\alpha^2}
$$
where the constants may have changed.
\end{proof}

\vskip 1cm
\begin{itemize}

\item[]{A. Clop\\
www.mat.uab.cat/$\sim$albertcp\\
Department of Mathematics and Statistics,\\ 
P.O.Box 35 (MaD)\\
FI-40014 University of Jyv\"askyl\"a\\
Finland}

\item[]{D. Faraco, A. Ruiz\\
Departmento de Matem\'aticas\\
Universidad Aut\'onoma de Madrid\\
Campus de Cantoblanco, s/n\\
28049-Madrid\\
Spain}

\end{itemize}

\end{document}